\newtheorem{theorem}{Theorem}[subsection]
\newtheorem*{theorem*}{Theorem}
\newtheorem{lemma}[theorem]{Lemma}
\newtheorem*{lemma*}{Lemma}
\newtheorem{prop}[theorem]{Proposition}
\newtheorem{corollary}[theorem]{Corollary}
\newtheorem{definition}[theorem]{Definition}
\newtheorem{claim}[theorem]{Claim}
\newtheorem{example}[theorem]{Example}
\newtheorem{conjecture}[theorem]{Conjecture}
\newtheorem{question}[theorem]{Question}
\newtheorem{remark}[theorem]{Remark}
\newtheorem{assumption}[theorem]{Assumption}
\crefname{claim}{claim}{claims}
\crefname{prop}{proposition}{propositions}
\crefname{prop}{proposition}{propositions}
\newcommand{\C}{\mathbb{C}}
\newcommand{\ZZ}{\mathbb{Z}}
\newcommand{\CC}{\mathbb{C}}
\newcommand{\RR}{\mathbb{R}}
\newcommand{\into}{\hookrightarrow}
\newcommand{\tensor}{\otimes}
\newcommand{\del}{\nabla}
\newcommand{\w}{\mathfrak w}
\DeclareMathOperator{\dTrop}{dTrop}
\DeclareMathOperator{\sgn}{sgn}
\DeclareMathOperator{\aff}{aff}
\DeclareMathOperator{\pre}{pre}
\newcommand{\CP}{\mathbb{CP}}
\DeclareMathOperator{\val}{val}
\newcommand{\opm}
{{\bullet/\circ}}
\renewcommand{\Im}{\text{Im}}
\DeclareMathOperator{\Fuk}{Fuk}
\DeclareMathOperator{\Supp}{Supp}
\DeclareMathOperator{\Coh}{Coh}
\newcommand{\CF}{{CF^\bullet}}
\begin{document}
\title{Tropical Lagrangians in toric del-Pezzo surfaces}
\author{Jeffrey Hicks}
\maketitle

\begin{abstract}
	We look at how one can construct from the data of a dimer model a Lagrangian submanifold in $(\CC^*)^n$ whose valuation projection approximates a tropical hypersurface.
	Each face of the dimer corresponds to a Lagrangian disk with boundary on our tropical Lagrangian submanifold, forming a Lagrangian mutation seed. 
	Using this we find tropical Lagrangian tori $L_{T^2}$ in the complement of a smooth anticanonical divisor of a toric del-Pezzo whose wall-crossing transformations match those of monotone SYZ fibers.
	An example is worked out for the mirror pair $(\CP^2\setminus E, W), \check X_{9111}$. 
	We find a symplectomorphism of $\CP^2\setminus E$ interchanging $L_{T^2}$ and a SYZ fiber.
	Evidence is provided that this symplectomorphism is mirror to fiberwise Fourier-Mukai transform on $\check X_{9111}$. 
\end{abstract}
\setcounter{tocdepth}{2}
\tableofcontents
\section{Introduction}
\label{subsec:syz}
\subsection{Homological Mirror Symmetry and Tropical Geometry}
Tropical geometry plays an important role in mirror symmetry, a duality proposed in \cite{candelas1991pair} between symplectic geometry on a space $X$, and complex geometry on a mirror space $\check X$. 
A proposed mechanism for constructing pairs of mirror geometries comes from SYZ mirror symmetry (\cite{strominger1996mirror}) where $X$ and $\check X$ have dual Lagrangian torus fibrations over a common affine manifold $Q$. 
\[
    \begin{tikzcd}
        X \arrow{dr}{\val} & & \check X \arrow{dl}{\check \val}\\
         & Q
    \end{tikzcd}
\]
From this viewpoint, mirror symmetry is recovered by degenerating the symplectic geometry of $X$ and complex geometry of $\check X$ to tropical geometry on the base $Q$. 
In the complex setting, this degeneration was studied by \cite{kontsevich2001homological,mikhalkin2005enumerative}, where a correspondence between the valuations of complex curves (called the amoeba) and tropical curves was established.
More recently, tropical-Lagrangian correspondences have been constructed in the parallel works of \cite{mikhalkin2019examples,matessi2018lagrangian,hicks2020tropical,mak2019tropically}.
These papers show that for a given tropical curve $V\subset Q$ there exists a Lagrangian submanifold $L(V)\subset X$ with $\val(X)$ approximating $V$. 

A more precise relation of these two geometries is the homological mirror symmetry conjecture of \cite{kontsevich1994homological}.
This predicts that Lagrangian submanifolds of $X$ and complex submanifolds of $\check X$ should be compared as objects via a mirror functor between the categories $\Fuk(X)$ and $D^b\Coh(\check X)$. 
An expectation is that homological and SYZ mirror symmetry interact by relating Lagrangian torus fibers of $\val: X\to Q$ to skyscraper sheaves of points on $\check X$, and sections of the Lagrangian torus fibration to line bundles of $\check X$. 

This intuition was used in \cite{abouzaid2009morse} which proved that the Fukaya-Seidel category $\Fuk((\CC^*)^n, W_\Sigma)$ is equivalent to $D^b\Coh(\check X_\Sigma)$, the derived category of coherent sheaves on a mirror toric manifold.
This was achieved by using tropical geometry to construct Lagrangian sections of $\val:(\CC^*)^n\to \RR^n$, and to show that these were mirror to line bundles on $\check X_\Sigma$. 
In \cite{hicks2020tropical}, it was shown that the tropical-Lagrangian and tropical-complex correspondences are compatible with this mirror functor, in the sense that when a tropical hypersurface $V$ is approximated by $\check \val(D)$ for a divisor $D$, the Lagrangian $L(V)$ is mirror to the sheaf $\mathcal O_D$. 
This extends the relation between homological and SYZ mirror symmetry to sheaves beyond line bundles and skyscrapers of points.

\subsection{Wall-Crossings and Lagrangian Mutation}
Lagrangian submanifolds have a moduli as objects of the Fukaya category. 
For example, the moduli space of Lagrangian torus fibers of the SYZ fibration equipped with local systems is expected to be the mirror space $\check X$.
A hands-on approach to understanding this moduli space is to build complex coordinate charts. 
These coordinate functions are locally constructed using the flux homomorphism between SYZ fibers, where the flux is weighted by the local systems.
An expectation is that Gromov-Witten potential, a weighted count of holomorphic disks with boundary on these Lagrangian tori, gives a holomorphic function on the moduli space.
A first example are the product tori in $\CC^2$, which bound two holomorphic disks, and whose GW potential is given by the sum of the two standard flux coordinates.

The presence of bubbling of holomorphic disks in families of Lagrangians leads to a difficulty in this theory where a discontinuity appears in the disk counts used to construct the GW potential.
In \cite{auroux2007mirror} these discontinuities are explained in terms of a wall-crossing correction which  describes how such bubblings can be appropriately incorporated into the flux coordinates on the space.
For example, the monotone Chekanov and product tori in $\CC^2$ are related by a Lagrangian isotopy which exhibits one of these wall-crossing corrections.

This technique inspired \cite{vianna2014infinitely} to produce examples of non-Hamiltonian isotopic monotone Lagrangians in toric del-Pezzos.
These Lagrangians are constructed by Lagrangian isotopies where wall-crossing occurs; thus the  Lagrangians have related (but not equal) holomorphic disk counts. 
This distinguishes the Hamiltonian isotopy classes of these Lagrangian submanifolds.
A framework for this story was developed by \cite{pascaleff2020wall} , which showed that Lagrangians constructed via Lagrangian mutation (a kind of Lagrangian surgery presented in \cite{haug2015lagrangian}) had disk counts which were related by a wall-crossing transformation.
The examples considered in \cite{vianna2014infinitely} were shown to be constructed via this Lagrangian mutation process.

\subsection{Statement of Main Results}
The goal of this paper is to extend the constructions of \cite{hicks2020tropical} to Lagrangian fibrations $X\to Q$ which are almost toric (and so may admit some fibers with singularities).
In doing so, we shed some light on questions laid out in \cite[section 6.3]{matessi2018lagrangian} regarding the homological mirror symmetry interpretation of monotone tropical Lagrangian tori in toric del-Pezzos. 

This paper first provides an alternate description of the tropical Lagrangian submanifolds from \cite{hicks2020tropical} using the combinatorics of dimers (classically, an embedded bipartite graph $G\subset T^2$).
To a dimer we construct an exact Lagrangian in $(\CC^*)^n$ whose valuation projection lies near a tropical curve (\cref{def:dimerlagrangian},\cref{cor:exact}).
The argument projection $\arg: (\CC^*)^n\to T^n$ of this Lagrangian is related to the dimer initially chosen.
We can find a set of Lagrangian mutations based on the combinatorics of the dimer graph. 

\begin{lemma*}[Dimer-Mutation Correspondance, Restatement of \ref{lemma:dimerantisurgery}]
	Let $L$ be a Lagrangian described by the dimer $G\subset T^2$. 
	Suppose a face $f$ of $G$ has boundary satisfying the zero weight condition (\cref{def:weight}).
	Then we can construct another Lagrangian by mutation,  $\mu_{D_f} L$,  whose argument projection can be explicitly described by another dimer.
\end{lemma*}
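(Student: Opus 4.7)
The plan is to reduce the global statement to a local computation near the Lagrangian disk $D_f$, apply the Lagrangian mutation template there, and then identify the resulting change in the argument projection as a combinatorial move on the dimer $G$.

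First I would unpack how the face $f$ of $G$ determines a Lagrangian disk $D_f \subset (\CC^*)^n$ with boundary on $L$, using the dimer-Lagrangian construction from \cref{def:dimerlagrangian}. The zero weight condition of \cref{def:weight} should be precisely what ensures that the boundary class $[\partial D_f] \in H_1(L)$ is primitive and that the disk closes up with well-defined framing, i.e.\ that the input data required for mutation is present. I would then invoke the Lagrangian mutation of \cite{haug2015lagrangian}: this is a purely local operation, in that $\mu_{D_f} L$ agrees with $L$ outside an arbitrarily small tubular neighborhood of $D_f$. Consequently the argument projection of $\mu_{D_f} L$ agrees with $\arg(L)$ away from a neighborhood of $\arg(D_f) \subset T^n$, and it suffices to understand the change in argument projection on that local region using the explicit surgery template.

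Finally I would produce the new dimer $G'$ as a local combinatorial move on $G$ at the face $f$ — intuitively by swapping the bipartite coloring around $\partial f$ and reconnecting the edges according to the surgery template — and verify by direct comparison that the dimer-Lagrangian associated to $G'$ has argument projection matching the one computed in the previous step. The main obstacle I anticipate is pinning down the correct combinatorial move and checking that it produces a valid embedded bipartite graph: maintaining bipartiteness while matching the edge arguments prescribed by \cref{def:dimerlagrangian} is where the zero weight hypothesis plays its essential role, ensuring the coloring and framing extend consistently around $\partial f$. As a sanity check I would compare the result to the classical spider/square move on dimers, which is the natural candidate combinatorial transformation and should match the Haug antisurgery template in the model examples.
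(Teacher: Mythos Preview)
Your overall strategy---construct $D_f$, localize the mutation, then read off a combinatorial change in the dimer---matches the paper's. But two of the three concrete steps are misidentified, and the third expectation (an embedded dimer) is wrong.

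\textbf{Role of the zero weight condition.} You say zero weight guarantees that $[\partial D_f]\in H_1(L)$ is primitive and that the disk has well-defined framing. That is not how the hypothesis is used. In the paper's \cref{lemma:disks}, zero weight is literally the statement $\int_{\gamma_c}\eta=0$ for the lifted boundary cycle, and its purpose is to produce a primitive $\alpha_\theta$ so that the boundary data extend to an exact $1$-form on $T^2$; the disk $D_f$ is then the graph of $d\alpha$ over the face $f$. The condition is about exactness (existence of a Lagrangian filling), not about primitivity or framing.

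\textbf{The combinatorial move.} The candidate you propose---swap the bipartite coloring around $\partial f$ and look for a spider/square move---is not what happens. The paper parameterizes a collar $h:S^1\times[-1,1]\to L$ of $\gamma_c$ so that the two boundary circles $\gamma_c^\bullet,\gamma_c^\circ$ alternately bleed into the $\Delta^\bullet$ and $\Delta^\circ$ polytopes (\cref{fig:twiststrip}). Antisurgery removes this band and caps the two circles with disks $D_f^\bullet,D_f^\circ$; in the argument projection this \emph{merges} all the $\Delta_{v_i}^\bullet$ with $v_i^\bullet\in\partial f$ into a single polygon $\Delta_{f^\bullet}=\mathrm{Hull}_{v_i^\bullet\in\partial f}(\Delta_{v_i}^\bullet)$, and likewise for the $\circ$ side. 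The new dimer has strictly fewer vertices, not a local rewiring.

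\textbf{Embeddedness.} You anticipate checking that the output is a valid embedded bipartite graph. In fact $\Delta_{f^\bullet}$ and $\Delta_{f^\circ}$ overlap, so the resulting dual dimer has self-intersections and the graph $G'$ is immersed (see \cref{fig:immerseddimer}). This is consistent with antisurgery producing an immersed Lagrangian; the subsequent surgery step in the mutation resolves that self-intersection, which on the dimer side corresponds to a further resolution of the immersed vertex.
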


This motivates the construction of tropical Lagrangian submanifolds inside of toric del-Pezzos. 
In the case where $\dim_\CC(X)=2$, the singular fibers of a toric fibration $X\to Q$ can be chosen to be of a particularly nice form.
We then call $Q$ an almost-toric base diagram, which has the structure of a tropical manifold. 
We show that tropical curves $V\subset Q$ meeting the discriminant locus of $Q$ admissibly admit Lagrangian lifts $L(V)\subset Q$. 
We then use this to construct some tropical Lagrangian tori in toric del-Pezzos.
As Lagrangian tori, these are interesting because in the complement of an anticanonical divisor they are not isotopic to those constructed in  \cite{vianna2014infinitely}.
\begin{theorem*}[Restatement of \ref{cor:almosttoriclift}, \ref{claim:mutationdirections}]
    Let $X$ be a toric del-Pezzo.
    Let $E\subset X$ be a smooth anticanonical divisor chosen so that there is an SYZ fibration $X\setminus E\to Q$ obtained from pushing in the corners of the Delzant polytope.
    There exists a tropical Lagrangian torus $L_{T^2}\subset X\setminus E$ which is not isotopic to $F_q$, the fiber of the moment map. 
    Furthermore both $L_{T^2}$ and $F_q$ bound matching configurations of Lagrangian antisurgery disks, giving them matching Lagrangian mutations.
\end{theorem*}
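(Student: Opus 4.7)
The plan is to split the theorem into three pieces: (i) construct $L_{T^2}$ as a tropical lift in the almost-toric base, (ii) distinguish it from the moment fiber $F_q$ via a Floer-theoretic invariant, and (iii) put the antisurgery disks on the two tori into explicit bijection using the dimer correspondence.

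For (i), I first fix an almost-toric base $Q$ for $X$ by pushing in each corner of the Delzant polytope far enough to expose one nodal fiber per corner. I then select a tropical curve $V \subset Q$ which is an embedded circle enclosing an interior point $q$, equipped with one outgoing edge directed along the invariant eigenray toward each nodal fiber, meeting the discriminant locus admissibly in the sense required by the almost-toric lifting result. Applying Corollary \ref{cor:almosttoriclift} to $V$ produces a Lagrangian $L(V) \subset X \setminus E$; because $V$ is homeomorphic to a circle and the tropical lift adjoins an $S^1$ cotangent direction along each edge, $L(V)$ is a torus, which I call $L_{T^2}$.

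For (ii), I distinguish $L_{T^2}$ from $F_q$ by comparing their Maslov-index-$2$ disk potentials. The fiber $F_q$ has the standard wall-crossing-corrected superpotential of Auroux, with monomials indexed by toric divisors together with node corrections. The potential of $L_{T^2}$ is read off the dimer $G$ associated to $V$, with one monomial per zero-weight face, and the two Laurent polynomials have different Newton polytopes; since disk potentials are invariant under Hamiltonian isotopy up to a change of flux coordinates, no such isotopy can interchange them. For (iii), on the $F_q$ side each nodal fiber of the almost-toric base contributes a visible Lagrangian thimble, which is an antisurgery disk of zero weight. On the $L_{T^2}$ side, the Dimer-Mutation Correspondence (restatement of \ref{lemma:dimerantisurgery}) attaches an antisurgery disk to each zero-weight face of $G$; by the choice of $V$ with one outgoing edge per nodal fiber, $G$ has exactly one such face per node, giving a canonical bijection with the $F_q$ thimbles. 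I would verify that the bijection preserves mutation directions by reading off the argument projection of each disk on both sides.

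The main obstacle is the combinatorial step of arranging $V$ so that its associated dimer has precisely the intended zero-weight faces, one per nodal fiber, with no spurious extras, and so that the homology classes of the resulting antisurgery disks match the thimble classes on the $F_q$ side. I expect this is handled one del-Pezzo at a time, with the $\CP^2$ computation described in the introduction setting the pattern. The non-isotopy step is also nontrivial, since one needs enough control over the holomorphic disks bounded by $L_{T^2}$ to extract its potential; this is precisely where the dimer description of $L_{T^2}$ pays off, as it makes the disks visible combinatorially.
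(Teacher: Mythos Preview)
Your step (ii) contains a genuine error that undermines the whole logical structure. You propose to distinguish $L_{T^2}$ from $F_q$ by showing their Maslov-2 disk potentials have different Newton polytopes. But this is precisely what \emph{fails}: the content of \cref{claim:mutationdirections} (and its immediate \cref{cor:superpotential}) is that the mutation configurations of $F_\Sigma$ and $F_\Sigma^*=L_{T^2}$ agree, and hence their Landau--Ginzburg potentials are the \emph{same}. That is the second clause of the theorem you are trying to prove, not a distinguishing invariant. Indeed, in the compact $X$ the paper shows (for $\CP^2$) that $L_{inner}$ is actually Lagrangian isotopic to $F_p$, so no disk-counting invariant could possibly separate them there; the non-isotopy only holds in $X\setminus E$. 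The paper's argument for non-isotopy is entirely different: one takes a Lagrangian thimble $\tau_i$ emanating from a nodal fiber of the almost-toric base. Since the tropical curve underlying $L_{T^2}$ has a leg running into that node, $\tau_i$ meets $L_{T^2}$ (or its isotope $L_{outer}$) in a single point, so $\CF(L_{T^2},\tau_i)\neq 0$; but $\tau_i$ is disjoint from any moment-map fiber $F_q$, so $\CF(F_q,\tau_i)=0$. This is a Floer-theoretic (in fact topological) obstruction, and it works uniformly across the toric del-Pezzos.

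Two smaller issues. In (i), the curve $V$ you describe --- a circle with one leg per node --- is the picture for $L_{inner}$ (\cref{fig:tpIII}, \cref{exam:innertropical}), which is related to $L_{T^2}=F_\Sigma^*$ by a mutation plus isotopy, not equal to it; the paper's $L_{T^2}$ is built directly as the dual dimer Lagrangian to the seed $(F_\Sigma,\{D_{i,\Sigma}\})$, with a single vertex modeled on the dimer of \cref{fig:fanotori}. Also, $V$ is not homeomorphic to a circle (it has legs), so your topology argument for $L(V)\cong T^2$ is incomplete; the paper checks this via the Euler characteristic formula $\chi(L^*)=|V(G)|-|E(G)|+|\Sigma|$ case by case. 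In (iii), your bijection ``one face per nodal fiber'' gets the counting right but not the content: what must be checked is that the boundary classes of the face-disks $D_f\subset L_{T^2}$ and the thimble-disks $D_{i,\Sigma}\subset F_\Sigma$ agree in $H_1$ after choosing coordinates, which the paper does by direct computation from the dimers in \cref{fig:fanotori}.
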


The observation that there exists a correspondence between the antisurgery disks with boundary on $L_{T^2}$ and $F_q$  suggests that, although they represent different objects in the Fukaya category, there is autoequivalence of the Fukaya category interchanging the Lagrangians $L_{T^2}$  and $F_q$.
Furthermore, we show that a variation of this construction works more generally whenever one has a certain kind of Lagrangian mutation seed. 

We look to mirror symmetry for why mutation configurations (like those considered by Vianna in toric Fanos) give tropical Lagrangian tori, and restrict to the example of $X=\CP^2$. 
The mirror to $\CP^2\setminus E$ is known to be $\check X_{9111}$, an extremal rational elliptic surface. 
There is an automorphism of $D^b\Coh(\check X_{9111})$ (a fiberwise Fourier-Mukai transform) which interchanges the moduli of points with the moduli of degree 0 line bundles supported on the elliptic fibers. 
Provided that a generation result for the Fukaya category of $\CP^2\setminus E$ is known, we can state what $L_{T^2}$ is as an object of the Fukaya category. 

\begin{theorem*}[Restatement of \ref{cor:nontrivial}, \ref{thm:fmmirror}]
    There exists a symplectomorphism $g:(\CP^2\setminus E)\to (\CP^2\setminus E)$ interchanging $L_{T^2}$ to $F_q$. 
    With \cref{assum:x9111hms},  $L_{T^2}$ is mirror to a line bundle supported on an elliptic fiber of $\check X_{9111}$. 
\end{theorem*}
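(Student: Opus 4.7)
The plan naturally splits into two parts, reflecting the two assertions of the theorem.

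For the first part, I would leverage the matching Lagrangian mutation seeds on $L_{T^2}$ and $F_q$ provided by the previous theorem. In the two--dimensional almost toric setting, a configuration of antisurgery disks with boundary on a torus fiber has a direct translation into data on the almost toric base diagram: each mutation disk corresponds to a visible Lagrangian running between the torus and a nodal fiber, and performing the mutation corresponds to a nodal slide through the torus. Two almost toric base diagrams that differ by nodal slides and branch cut rotations present the same symplectic manifold, and the identification lifts to an ambient symplectomorphism $g$. My approach is therefore to write down an explicit sequence of such base moves that begins with the standard almost toric diagram on $\CP^2 \setminus E$ (with the moment--map fiber $F_q$ marked), performs the three mutations dictated by the matching seed, and returns to a diagram in which $L_{T^2}$ appears as a standard torus fiber. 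The composition of these moves yields the desired symplectomorphism with $g(L_{T^2})=F_q$.

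For the second part, I would work under \cref{assum:x9111hms}, which provides an equivalence $\Fuk(\CP^2\setminus E) \simeq D^b\Coh(\check X_{9111})$. Under this equivalence, the SYZ fiber $F_q$ with its trivial local system is mirror to a skyscraper sheaf $\mathcal O_p$ at a point $p\in \check X_{9111}$. The symplectomorphism $g$ constructed above induces an autoequivalence $\Phi_g$ of $\Fuk(\CP^2\setminus E)$, and under HMS this transports to an autoequivalence of $D^b\Coh(\check X_{9111})$ sending $\mathcal O_p$ to the mirror of $L_{T^2}$. To identify this image I would verify that $\Phi_g$ matches a fiberwise Fourier--Mukai kernel on the elliptic fibration $\check X_{9111}\to \mathbb P^1$: compare the action on a second test object (for example, a Lagrangian section mirror to a structure sheaf of a fiber, or a line bundle on $\check X_{9111}$ with known Floer cohomology), and check that $\Phi_g$ preserves the fibration class. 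Since fiberwise Fourier--Mukai sends a skyscraper at $p$ to a degree--zero line bundle on the unique elliptic fiber through $p$, this identification yields the second conclusion.

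The main obstacle is the first step: turning the combinatorial correspondence between antisurgery disks of $L_{T^2}$ and $F_q$ into a genuine ambient symplectomorphism, rather than merely a Lagrangian isotopy with wall--crossing (which would only give an equivalence in the Fukaya category, not on the nose interchange two specific Lagrangian representatives). This requires a careful analysis of how nodal slides in the almost toric base affect individual torus fibers, including tracking the framing data so that the composition closes up. The mirror--symmetry portion is comparatively less delicate once HMS is assumed, because one only needs to match $\Phi_g$ with a fiberwise Fourier--Mukai transform on enough objects to pin down the autoequivalence up to the ambiguities inherent in the statement, and the paper already signals that this step is supported by matching evidence rather than a complete categorical proof.
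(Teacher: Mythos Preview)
Your proposal takes a genuinely different route from the paper on both parts, and the first part has a real gap.

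For the symplectomorphism $g$, the paper does not use almost toric base moves at all. Instead it exploits a classical algebraic symmetry: it realizes $E$ as a member $E_{12}$ of the Hesse pencil, and takes $g$ to be an element of the Hessian group acting on $\CP^2$ as the permutation $(p_1p_2)(p_3p_4)$ of the four $I_3$ fibers. This $g$ fixes $E_{12}$ setwise but swaps two vanishing cycles $\ell_1,\ell_2\subset E_{12}$. The paper then identifies $F_q$ with the parallel-transport torus $T_{\epsilon,\ell_1}$ and $L_{outer}$ with $T_{\epsilon,\ell_2}$ (using the nodal-trade isotopy to relate $L_{outer}$ to $L_{inner}$ and hence to $L_{T^2}$), so $g$ interchanges them. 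Your plan to assemble $g$ from nodal slides runs into the problem you yourself flag: a nodal slide through a torus fiber \emph{mutates} that fiber, and mutation does not arise from an ambient symplectomorphism (this is exactly why Vianna's tori are pairwise non-Hamiltonian-isotopic). Matching mutation seeds tell you the two tori have the same disk-count potential, not that some composition of base moves carries one to the other. Moreover $L_{T^2}$ is a tropical Lagrangian over a tripod, not a torus fiber of any almost toric fibration on $\CP^2\setminus E$, so the endpoint of your sequence of moves is not even of the right shape.

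For the mirror statement, the paper again argues directly rather than through $g$. It builds a three-ended Lagrangian cobordism $(L_{T^2},\ \tau_1\sqcup\tau_2\sqcup\tau_3)\rightsquigarrow \pi^{-1}(L(\phi_{T^2}))$ by surgering the three thimble intersections, and reads off the exact triangle $\bigsqcup_i\tau_i\to\pi^{-1}(L(\phi_{T^2}))\to L_{T^2}$. Under \cref{assum:x9111hms} this matches the sheaf sequence $\bigoplus_i\mathcal O_{D_i}\to\check\pi^{-1}\mathcal O_{E_p}\to\mathcal O_{W_{9111}^{-1}(p)}$, identifying $L_{T^2}$ with the strict transform up to Chow equivalence. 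Your route---transport the skyscraper mirror to $F_q$ by $\Phi_g$ and then argue that $\Phi_g$ is fiberwise Fourier--Mukai---is exactly the conjecture the paper is providing \emph{evidence} for; the theorem itself is proved without it. So your second part assumes more than the paper does, while the paper's cobordism argument is both more elementary and independent of the first part.
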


\subsection{Outline of Construction}
We now outline the rest of this paper, focusing on the construction of the Lagrangian $L_{T^2}$, its surgery disks, and the symplectomorphism $g: \CP^2\setminus E\to \CP^2\setminus E$. \Cref{sec:background} starts with some necessary background and notation.
In \cref{subsec:surgery}, we look at Lagrangian surgery, antisurgery, and mutations. 
These are the tools which we use to build tropical Lagrangian submanifolds and to describe the Lagrangian mutation phenomenon which becomes the focus of inquiry.
\Cref{subsec:toricbackground} reviews tropical geometry on affine manifolds, with an emphasis on dimension two.  

\begin{figure}
    \centering
    \begin{subfigure}{.3\linewidth}
		\centering
		\input{figures/coamoeba.tikz}
		\caption{A dual dimer. The three white hexagons correspond to antisurgery disks for mutation.}
	\label{fig:exampledimer}
		\end{subfigure}\;\;\;
    \begin{subfigure}{.3\linewidth}
        \centering
    \input{figures/examplepicture0.tikz}
    \caption{Two tropical curves related by a nodal trade. These curves have isotopic Lagrangian lifts.}
    \label{fig:example1}
    \end{subfigure}\;\;\;
    \begin{subfigure}{.3\linewidth}
        \centering
        \input{figures/examplepicture.tikz}
    \caption{The Lagrangian $L_{T^2}\subset \CP^2\setminus E$. The center vertex corresponds to \cref{fig:exampledimer}.}
    \label{fig:example2}
    \end{subfigure}
    \caption{}
\end{figure} 

\Cref{sec:dimer} extends the results of \cite{hicks2020tropical} to construct tropical Lagrangian submanifolds from the data of a dimer.
This involves giving a definition for a dual-dimer (\cref{def:dualdimer}) in higher dimensions as a collection of polytopes $\{\Delta^\circ_v\}, \{\Delta^\bullet_w\}$ in the torus $T^n$ whose vertices have a matching condition imposed on them (see \cref{fig:exampledimer}).
We show that such a collection of polytopes corresponds to a tropical hypersurface in $\RR^n$.
In \cref{subsec:dimerlagrangians} we construct from this collection of polytopes a Lagrangian whose valuation projection lies nearby the corresponding tropical hypersurface, and whose argument projection matches the dual dimer. 
\Cref{subsec:dimercomputation} is a slight detour from the main focus of the paper to provide a combinatorial model for the Floer-theoretic support of a tropical Lagrangian in terms of the Kasteleyn operator (similar to the computation in  \cite{treumann2019kasteleyn} for microlocal sheaf theory).

The Lagrangian mutation story is introduced in \cref{subsec:mutations}, where we show that each face of the dimer builds a Lagrangian antisurgery disk on the corresponding Lagrangian. 
These faces arise as sections of the argument projection  over the complement of the polytopes in the dual dimer. 
We additionally show that Lagrangian mutation across these disks can be understood as a modification of the underlying combinatorial dimer. See \cref{fig:exampledimer}. 

In \cref{sec:almosttoric}, we generalize beyond tropical Lagrangians in $(\CC^*)^n$ to tropical Lagrangians in almost toric fibrations.
We show that a tropical curve in an almost toric base diagram has a Lagrangian lift by constructing a local model for the Lagrangian lift near the discriminant locus.
In dimension 2, we prove that deformations of tropical curves lift to Lagrangian isotopies of their Lagrangian lifts. 
\begin{lemma*}[Nodal Trade for Tropical Lagrangians]
	The local models for Lagrangian submanifolds in \cref{fig:example1} are Lagrangian isotopic. 
\end{lemma*}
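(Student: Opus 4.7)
The plan is to exhibit an explicit Lagrangian isotopy between $L_1$ and $L_2$ in the local symplectic model of a focus-focus singularity. By Symington's nodal trade, the two almost toric base diagrams depicted in \cref{fig:example1} describe the same local symplectic manifold $U$, with the smooth-corner description related to the focus-focus description by an explicit symplectomorphism (defined away from the cut). Using the tropical-to-Lagrangian lift construction of \cref{sec:almosttoric} in each description, both $L_1$ (the lift of the straight tropical line passing admissibly through the focus-focus point) and $L_2$ (the lift of the trivalent eye-plus-Y configuration sitting in the smooth-corner region) appear as embedded Lagrangians in a common neighborhood of $U$.

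To construct the isotopy, I would interpolate via a one-parameter family of tropical curves $V_t$ in $U$ obtained from a nodal slide: move the focus-focus point continuously along its invariant ray until it crosses the trivalent vertex of the eye configuration of $L_2$. As the singularity slides toward the vertex, the small triangular loop shrinks; at the critical parameter it is absorbed into the discriminant and the three outer edges of the eye recombine with the edge into the focus-focus point; past the critical value, the three directions of the Y merge into the single straight-line ray of $L_1$, with the monodromy around the focus-focus fiber accounting for the change in the combinatorial presentation of the tropical curve. Lifting the family $V_t$ to Lagrangian submanifolds $L_t$ using the admissible lift construction produces a candidate isotopy.

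The main technical step, which I expect to be the principal obstacle, is the smoothness check at the critical parameter, where the combinatorial type of $V_t$ jumps as the eye is absorbed into the discriminant locus. For this one would use the explicit local model for the Lagrangian lift of a tropical curve meeting a focus-focus singularity admissibly, as built earlier in \cref{sec:almosttoric}, and compare it directly to the lift of the eye-plus-Y on the other side. The key computation is to verify that in action-angle coordinates on a neighborhood of the singular fiber, the two local Lagrangian pieces agree after applying the prescribed monodromy to the three Y-edges. Once this gluing identity is checked, the family $L_t$ is smooth throughout and provides the desired Lagrangian isotopy between $L_1$ and $L_2$.
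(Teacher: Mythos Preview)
Your reading of the setup is off, and this leads the outline astray. In \cref{fig:example1} there are not two almost toric base diagrams related by a Symington nodal trade; there is a single base $Q_\times$ (the local model $\CC^2\setminus\{z_1z_2=1\}$ of \cref{claim:tropicalthimbles}) containing a fixed focus--focus point, and two tropical curves $V(\phi_0)$ (the affine line) and $V(\phi_1)$ (the pair of pants with one leg running into the node, capped by a thimble). There is no corner to trade away, so invoking nodal trade/nodal slide of the \emph{fibration} is not the right move. What must be produced is an isotopy of Lagrangians over the fixed fibration, and the genuine content is exactly the step you flag as ``the principal obstacle'': understanding what happens as the Lagrangian passes through the singular fiber. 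Action--angle coordinates degenerate there, so ``checking a gluing identity in action--angle coordinates on a neighborhood of the singular fiber'' is not a workable plan as stated.

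The paper avoids this entirely by switching to the Lefschetz fibration $W=z_1z_2$ on $\CC^2\setminus\{z_1z_2=1\}$ and describing both Lagrangians as \emph{Lagrangian gloves} (\cref{subsec:tropicalpantslefschetz}). The line $L(\phi_0)$ is the parallel transport torus $L^\ell_{\gamma_\epsilon}$ of $\ell=\RR\subset W^{-1}(-1+\epsilon)$ around a small loop $\gamma_\epsilon$ encircling the puncture; the pants-plus-thimble $L(\phi_1)$ is $\tau_0 L^\ell_{\gamma_\epsilon}$, obtained from the same loop after trading a Dehn twist for an attached vanishing thimble at the critical value $0$. The isotopy is then a direct application of \cref{lemma:dehnexchange} (the thimble/Dehn--twist exchange, cf.\ \cite[Lemma A.25]{abouzaid2018khovanov}), which is precisely the statement that these two glove presentations give Lagrangian isotopic submanifolds. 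This Lefschetz viewpoint is what makes the ``passage through the node'' tractable: the singular behavior is packaged into the well--understood relation between surgery traces and Dehn twists, rather than analyzed in degenerating torus coordinates.
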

This lemma becomes a convenient tool for constructing isotopies of Lagrangian submanifolds, and is based on a method in used in \cite{abouzaid2018khovanov} to compare Lagrangians inside of Lefschetz fibrations.
Both the lifting and isotopies of tropical curves are achieved by modeling a node in the almost toric base diagram with a Lefschetz fibration. 
Tropical Lagrangians are described in these neighborhoods as Lagrangian surgeries of Lagrangian thimbles.

In \cref{sec:tdp} we apply the tropical lifting construction from the previous section to build tropical Lagrangian tori in toric del-Pezzos disjoint from an anticanonical divisor (see \cref{fig:example2}). 
The vertex of the tropical Lagrangian is modeled on a dimer. 
A computation shows that the mutation directions of this dimer match the ones known from \cite{vianna2017infinitely,pascaleff2020wall}. 
This constructs the tropical Lagrangian tori from the first theorem. 

Finally, in \cref{subsec:comparingtori} we present an in-depth example of homological mirror symmetry for the example of $\CP^2\setminus E$ following \cite{auroux2006mirror}.
The main observation is that we may choose $E$ to be a member of the Hesse pencil of elliptics, which has a large amount of symmetry. Using this observation, we take $g: \CP^2\setminus E\to \CP^2\setminus E$ to be a pencil automorphism which fixes $E$, but switches its meridional and longitudinal directions. 
The Lagrangian $L_{T^2}$ is compared to a Lagrangian in a neighborhood of $E$ using the mutation and nodal-trade operation for tropical Lagrangians.
It is then observed that $F_q$, a fiber of the SYZ fibration, also may be isotoped so that it too lives near $E$. 
In a neighborhood of $E$, we see that $g$ interchanges these two Lagrangians.
We also present $L_{T^2}$ as a surgery of Lagrangian thimbles which are expected to generate the Fukaya category of $\CP^2\setminus E$, which characterizes the mirror object to $L_{T^2}$ in $D^b\Coh(\check X_{9111})$. 

\subsection{Acknowledgements}

This project wouldn't be possible without the support of my advisor Denis Auroux during my studies at UC Berkeley.

While working on this project, I benefited from conversations with 
    Ailsa Keating, 
    Mark Gross, 
    Diego Matessi, 
    Nick Sheridan
who have been very generous with their time and advice.
I also thank
    Jake Solomon
for providing useful feedback on the introduction to this paper.
Additionally, I would like to thank
    Paul Biran,
who provided me with great amount of mathematical and professional advice during my visit at ETH Z\"urich.
Finally, the exposition of this paper benefitted from the thoughtful feedback of an anonymous reviewer.

A portion of this work was completed at ETH Z\"urich.
This work was partially supported by 
    NSF grants DMS-1406274 and 
    DMS-1344991 
and by a Simons Foundation grant 
    (\#\,385573, Simons Collaboration on Homological Mirror Symmetry).

\section{Some Background}
\label{sec:background}

\subsection{Lagrangian Surgery and Mutations}
\label{subsec:surgery}
Lagrangian surgery is a tool for modifying a Lagrangian along its self intersection locus.
It was introduced by \cite{polterovich1991surgery} in the case where a Lagrangian is immersed with transverse self-intersections. 
In this setting, a neighborhood of the transverse intersection is replaced with a Lagrangian neck.
We will be using two similar notions of surgery. 
One extension is antisurgery along \emph{isotropic surgery disks} \cite{haug2015lagrangian}.

\begin{theorem}[\cite{haug2015lagrangian}] Suppose that $D^k$ is an isotropic disk with boundary contained in $L$ and cleanly intersecting $L$ along the boundary. Then there exists an immersed Lagrangian $\alpha_D(L)\subset X$ called the \emph{Lagrangian antisurgery of $L$ along $D$}, which satisfies the following properties
	\begin{itemize}
		\item
		      $\alpha_D(L)$ is topologically obtained by performing surgery along $D^k$,
		\item
		      $\alpha_D(L)$ agrees with $L$ outside of a small neighborhood of $D^k$,
		\item
		      If $L$ was embedded and disjoint from the interior of $D^k$, then $\alpha_D(L)$ has a single self-intersection point.
	\end{itemize}
\end{theorem}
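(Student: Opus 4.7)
The plan is to reduce to a standard local model around $D^k$ via an isotropic Weinstein-type neighborhood theorem, and then construct the surgery explicitly in that model. Because $D^k$ is isotropic and $L$ meets it cleanly along $\partial D$, a neighborhood of $D$ in $X$ is symplectomorphic to a neighborhood of the zero section in a standard model of the form $T^*D^k \times (\RR^{2(n-k)}, \omega_{\text{std}})$. The clean intersection hypothesis forces $L$ in this model to take a prescribed normal form near $\partial D$: it agrees with the union of $D^k$ and a Lagrangian ``normal slab'' $\partial D^k \times \RR^{n-k+1}$, with the two pieces meeting transversally in their boundary directions along $\partial D$. This reduces the statement to a purely local construction.

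In the local model, the singular union $D^k \cup L$ has a corner along $\partial D$ that locally looks like two linear Lagrangian half-spaces meeting along a codimension-one subspace. I would smooth this corner by the same profile-curve mechanism used in Polterovich's original surgery: pick a smooth curve $\gamma \subset \RR^2$ which agrees with the two coordinate half-axes outside a compact set and rounds the corner, then sweep $\gamma$ by the appropriate $S^{k-1}$-action coming from the boundary of $D^k$. The resulting submanifold is a Lagrangian handle diffeomorphic to $S^{k-1} \times [0,1] \times \RR^{n-k}$, and replacing the tubular neighborhood of $\partial D$ in $L \cup D$ by this handle produces the required Lagrangian $\alpha_D(L)$.

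With the construction in place, the three properties follow by inspection. Topologically, attaching the handle and retaining the core $D^k$ realizes precisely the surgery along $\partial D \subset L$, since one is removing a neighborhood $S^{k-1} \times D^{n-k+1}$ and gluing in $D^k \times S^{n-k}$. The ``agrees outside a neighborhood'' property is built into the construction, since the profile curve equals the coordinate axes outside a compact set. In the embedded case with $L$ disjoint from $\mathrm{int}(D)$, the only place $\alpha_D(L)$ can fail to be embedded is at the ``tip'' of the handle, where the core disk meets its normal $S^{n-k}$ fiber in a single point --- giving exactly one transverse self-intersection.

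The main obstacle is constructing the profile surface so that the resulting object is genuinely Lagrangian and smoothly interpolates between the two sides of $\partial D$ inside the chosen Weinstein model. Unlike standard Polterovich surgery, the two branches being joined here are asymmetric --- one is a disk with boundary, the other an open piece of $L$ --- so one must verify that the local normal forms align compatibly and that the handle can be chosen $S^{k-1}$-equivariantly so the sweep closes up. This is the technical heart of Haug's argument, and once carried out, any residual self-intersections of $\alpha_D(L)$ (beyond the single one above) come exactly from pre-existing intersections of $\mathrm{int}(D)$ with $L$, as required.
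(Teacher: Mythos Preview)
The paper does not prove this statement at all: it is quoted as a background result with attribution to \cite{haug2015lagrangian} and no proof is given. There is therefore nothing in the paper to compare your argument against.

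That said, your sketch is broadly in line with how Haug's construction actually proceeds --- reduce to a local model via an isotropic neighborhood theorem, then perform a Polterovich-type smoothing along $\partial D$ using a profile curve swept by the $S^{k-1}$-action. One correction: in the third bullet, the single self-intersection does not arise from a ``tip'' of the handle meeting a normal $S^{n-k}$ fiber. The antisurgery removes a neighborhood of $\partial D$ in $L$ and glues in a piece containing two copies of the interior of $D$ (the two sheets of the handle on the $D$ side), and these two sheets meet at the center of $D$ --- that is the self-intersection point. Your description of which pieces are being removed and glued also needs care: antisurgery along an isotropic $D^k$ with $\partial D \subset L$ replaces $S^{k-1}\times D^{n-k+1}\subset L$ by $D^k\times S^{n-k}$, but the resulting object is only immersed, not embedded, precisely because the core $D^k$ contributes the double point.
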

When we perform antisurgery of an embedded Lagrangian along a Lagrangian disk  $D^{n}$ the resulting Lagrangian has a single self-intersection.\footnote{The notation $\alpha_D(L)$ is chosen as the character $\alpha$ results from applying antisurgery on the character $c$. } There exists a choice of surgery neck so that the resolution of the self-intersection of $\alpha_{D^n}(L)$ by Lagrangian surgery is $L$. However, if we choose a Lagrangian surgery neck in the opposite direction of the disk $D^n$ to combine antisurgery with surgery, we can obtain a new embedded Lagrangian.
\begin{definition}(Adapted from \cite[Definition 4.9]{pascaleff2020wall}).
	Let $L$ be an embedded Lagrangian submanifold, and $D^{n}$ a surgery disk. Let $\alpha_D(L)$ be obtained from $D^{n}$ by antisurgery.  The \emph{mutation of $L$ along $D^n$} is the Lagrangian $\mu_D(L)$  obtained from $\alpha_D(L)$ by resolving the resulting single self-intersection point with the opposite choice of neck.
	\label{def:mutation}
\end{definition}
It is expected that Lagrangians submanifolds which are related by mutation give different charts on the moduli space of Lagrangian submanifolds in the Fukaya category, and that these charts are related by a wall crossing formula \cite{pascaleff2020wall}. A typical example of Lagrangians related by mutation are the Chekanov and Clifford tori in $\CC^2$ obtained by taking two different resolutions of the Whitney sphere.

The second variation of Lagrangian surgery that we use is surgery along a non-transverse intersection with a particular collared neighborhood. 
This surgery replaces two Lagrangians with one in a neighborhood of their symmetric difference.
\begin{prop}
	Let $L_0$ and $L_1$ be two Lagrangians with boundary. Let $U\subset L_0$ be an open neighborhood of $L_0\cap L_1$.
	Suppose there exists a choice of collar neighborhood for the boundary of $U$
	\[
		\partial U\times (0, t_0)_t\subset U
	\]
	and a function $f: U\to \RR$ with the following properties:
	\begin{itemize}
		\item
			  On the collar, $f$ depends only on the $t$ variable, is decreasing and convex, and $f(t_0)=t_0$.
		\item
			  The function vanishes on the complement of $\partial U\times (0, t_0)_t$
		\item
			  In a sufficiently small Weinstein neighborhood  $B^*_{c} U$, the Lagrangian $L_1|_{B^*_c U}$ is the graph of the section $df$.
	\end{itemize}
	Then there exists a Lagrangian $L_0 \#_U^{r,s} L_1$ satisfying the following properties:
	\begin{itemize}
		\item
			  $L_0\#_U^{r,s} L_1$ lives in a small neighborhood of the symmetric difference  $(L_0\cup L_1)\setminus (L_0 \cap L_1))\subset X$. 
		\item
			  There exists a Lagrangian cobordism (in the sense of \cite{biran2013fukayacategories}) $K: (L_0, L_1)\rightsquigarrow L_0\#_U^\epsilon L_1$
	\end{itemize}
	The Hamiltonian isotopy class of $L_0\#_U^{r,s} L_1$ is dependent on the choice of profile functions $r:\RR_{>t_0/2}\to \RR, s: \RR_{>t_0/2}\to \RR$, whose properties are given in \cite[Proposition 3.1]{hicks2020tropical}.
	\label{prop:generalizedsurgeryprofile}
\end{prop}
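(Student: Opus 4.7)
The plan is to reduce the construction to a one-dimensional surgery problem inside the Weinstein neighborhood $B^*_c U$, paralleling \cite[Proposition 3.1]{hicks2020tropical}. Inside $B^*_c U$ there are canonical coordinates in which $L_0$ is the zero section and $L_1$ is the graph of $df$. Because $f$ depends only on the $t$ variable on the collar $\partial U \times (0, t_0)$, the surgery picture there factors as a product: a one-dimensional surgery in the $(t,\tau)$-cotangent plane crossed with the identity on $\partial U$.

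First I would work in the $(t,\tau)$-plane, where $L_0$ restricts to the horizontal axis $\{\tau = 0\}$ and $L_1$ restricts to the graph of $f'(t)\, dt$. Since $f$ is decreasing and convex on the collar, this graph is disjoint from the axis on the interior of the collar and limits to it at the inner end where $f$ transitions to zero. Following the profile-function recipe of \cite[Proposition 3.1]{hicks2020tropical}, I would construct a smooth arc $\gamma_{r,s}$ in the $(t,\tau)$-plane that agrees with the axis near one end of the collar, agrees with the graph of $f'(t)\, dt$ near the other, and interpolates through a convex smoothing determined by $r$ and $s$. Rotating $\gamma_{r,s}$ fiberwise over $\partial U$ produces a Lagrangian $\Lambda_{r,s} \subset B^*_c(\partial U \times (0, t_0))$ which by construction glues smoothly onto $L_0 \setminus U$ at one end and onto $L_1 \setminus B^*_c U$ at the other; this is the desired $L_0\#_U^{r,s}L_1$. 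Because $\gamma_{r,s}$ avoids the locus where $f = 0$, the resulting Lagrangian lies in a neighborhood of the symmetric difference $(L_0\cup L_1)\setminus (L_0\cap L_1)$.

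For the cobordism statement, I would promote $\gamma_{r,s}$ to a one-parameter family $\gamma_{r,s}^\sigma$ in which the $\sigma\to\infty$ limit is the broken union of the axis and the graph (producing $L_0\cup L_1$) and the $\sigma = 0$ value is the smoothed surgery arc (producing $L_0\#_U^{r,s}L_1$). Parametrizing this family by a cobordism coordinate on $\RR\times X$ and taking the trace yields the Lagrangian cobordism $K$ with the required ends, exactly as in the standard surgery cobordism construction.

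The main obstacle is verifying smoothness and genuine embeddedness of the gluing at the two ends of the collar, where the product structure of $\Lambda_{r,s}$ must match the possibly non-product behavior of $L_1$ outside $B^*_c U$ and of $L_0$ in the interior of $U$. The conditions on $f$ (monotonicity, convexity, and the normalization $f(t_0) = t_0$) are precisely what ensure that the arc $\gamma_{r,s}$ can be chosen inside the Weinstein neighborhood with tangencies matching to sufficiently high order at both ends, so the analytic verification reduces to the one-variable case already carried out in \cite[Proposition 3.1]{hicks2020tropical}.
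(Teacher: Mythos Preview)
Your proposal is correct and matches the paper's own approach: the paper simply states that the proof is analogous to the case when $U$ is contractible, as presented in \cite[Proposition~3.1]{hicks2020tropical}, and your reduction to the one-dimensional surgery in the collar direction is exactly that analogy spelled out. You have in fact supplied more detail than the paper does.
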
 
The proof is analogous to the proof for the case when $U$ is contractible presented in \cite[Proposition 3.1.]{hicks2020tropical}.
When we only need the Lagrangian isotopy class of $L_0\#_U^{r,s} L_1$, we will drop the decorations of the profile functions and write $L_0\#_U L_1$.
\subsection{Affine and Tropical Geometry}
\label{subsec:toricbackground}

We summarize a description of these tropical manifolds from \cite{gross2011tropical}.
\begin{definition}
	An \emph{integral tropical affine manifold with singularities} is a manifold with boundary $Q$ containing an open subset $Q_0$ such that 
	\begin{itemize}
		\item
			  $Q_0$ is an integral affine manifold with an atlas whose transition functions are in $SL(\ZZ^n)\ltimes \RR^n$. 
		\item
			  $ \Delta:= Q\setminus Q_0$, the \emph{discriminant locus,} is codimension 2
		\item
		      $\partial Q\subset Q$ can be locally modelled after a $SL(\ZZ^n)\ltimes \RR^n$ coordinate change on $\RR^{n-k}\times \RR_{\geq 0}^k.$
	\end{itemize}
\end{definition}
We will be interested in tropical manifolds where the discriminant locus additionally comes with some affine structure. A tropical manifold is a pair $(Q, \mathcal P)$, where $\mathcal P$ is a polyhedral decomposition of $Q$.
For a full definition of the data of a tropical manifold  $(Q, \mathcal P)$, we refer the reader to \cite[Definition 1.27]{gross2011tropical}, and provide a short summary here.
The vertices of this polyhedral decomposition are decorated with fan structures which are required to satisfy a compatibility condition so that the polyhedra may be glued with affine transitions across their faces. 
The compatibility need not extend to affine transitions in neighborhoods of the codimension 2 facets of the polyhedra, giving rise to the discriminant locus, a union of a subset of the codimension 2 faces. 
This determines the affine structure on $Q_0$ completely.
We call such a manifold an integral tropical manifold if all of the polyhedra are lattice polyhedra. 
For most of the examples that we consider, $Q$ will be real 2-dimensional, and the notions of tropical manifold and tropical affine manifold agree with each other.

\subsubsection{Almost Toric Base Diagrams}

The majority of our focus will be in $\dim(Q)= 2$, where there is a graphical notation for describing the affine geometry on $Q$ and correspondingly the symplectic geometry of the 4-dimensional symplectic manifold $X$ \cite{symington71four,leung2010almost}.
To describe the affine structure on $Q$, we describe the monodromy around the singular fibers.
This can be done diagrammatically with the following additional data.

\begin{definition}
	Let $(Q, \mathcal P)$ be a 2-dimensional tropical manifold. Let $Q^0$ be the set of singular points.
	At each point $q_i \in Q^0$ we define the \emph{eigenray} $R_i\subset Q$ to be the ray in the base starting at $q_i$ pointing in the eigendirection of the monodromy around $q_i$.
	A \emph{base diagram} is a map from $Q\setminus \bigcup_i R_i$ to $\RR^2$ with the standard affine structure, with eigenrays marked with a dashed line at each singularity. 
	We decorate the points $q_i$ with the marker $\times_k$, where the monodromy around $q_i$ is a $k$-Dehn twist.
\end{definition}

The Lagrangian fibers $F_q$ of $X\to Q$ can be described by the points in the base diagram. 

\begin{itemize}
	\item
		If a point $q\in Q\setminus \RR$ has a standard affine neighborhood, then $F_q$ is a Lagrangian torus.
	\item
		If the point $q\in Q\setminus \RR$ has an affine neighborhood modelled on $\RR\times \RR_{\geq 0}$ then fiber $F_q$ is an elliptic fiber of corank 1, corresponding to an isotropic circle in $X$.
	\item	
		If the point $q\in Q\setminus \RR$ has an affine neighborhood modelled on $\RR_{\geq 0} \times \RR_{\geq 0}$, the fiber $F_q$ is an elliptic fiber of corank 2, which is simply a point in $X$.
	\item
		If a point $q\in Q\setminus \RR$ belongs to the discriminant locus, then the fiber is a Whitney sphere (if $k=1$) or a plumbing of Lagrangians spheres (if $k> 1$).
\end{itemize}

The \emph{nodal slide, nodal trade,} and \emph{cut transfer} are operations which modify the affine structure of a base diagram $Q$  but correspond to symplectomorphisms of $X\to Q$.
The nodal trade modifies a base diagram by replacing an elliptic corank 2 fiber with a nodal fiber in the neighborhood of an elliptic corank 1 fiber.
This replaces a corner with a nodal fiber whose eigenline points in the balancing direction to the corner. See \cref{fig:nodaltrade}. 
\begin{figure}
	\centering
		\input{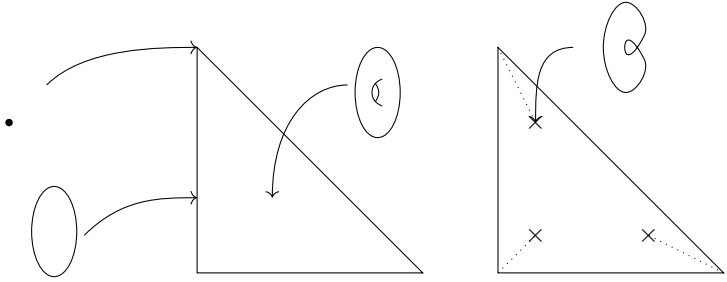}
		\caption
		{
			The nodal trade  applied three times to the toric diagram of $\CP^2$. The toric divisor given by a nodal elliptic curve is transformed into a smooth symplectic torus.
		}
		\label{fig:nodaltrade}
\end{figure}

\subsubsection{Tropical Differentials}
In the setting where $Q=\RR^n$, a tropical hypersurface is defined via the critical locus of a tropical function $\phi: Q\to \RR$.
However, in the general setting of tropical manifolds there are sets which are locally described by the critical locus of tropical functions but cannot be globally described by a tropical function due to monodromy around the singular fibers.
Since the  construction of tropical Lagrangians only requires the differential of the tropical function, this is not problematic.
\begin{definition}
	Let $Q$ be a tropical manifold. The \emph{sheaf of tropical differentials on $Q_0$} is the sheaf $\Omega^1_{\aff}$ on the space $Q_0$. It is given by the sheafification of the quotient: 
	\[
		\Omega^1_{\aff}(U)=\{\phi: U\to \RR \}/ \RR
	\]
	where $\phi: U \to \RR$ is a piecewise linear function satisfying the following conditions:
	\begin{itemize}
		\item 
		$d\phi\in T^*_\ZZ U$ whenever $d\phi$ is defined,
		\item 
		For every point $q\in U$ there exists an integral affine neighborhood $B_\epsilon(q)$ so that the restriction $\phi|_{B_\epsilon(q)}$ is concave.
	\end{itemize}
	The sheaf $\RR$ here is the sheaf of constant functions.
	The sheaf of \emph{integral tropical differentials} is the subsheaf of constant sections of $T^*_\ZZ(Q_0)$.

	Let $i:Q_0\into Q$ be the inclusion.
	We define the \emph{sheaf of tropical sections}\footnote{In \cite{gross2011tropical}, these are called piecewise linear affine multi-valued functions}  to be the quotient sheaf 
	\[
	\dTrop:= i_*(\Omega^1_{\aff})/i_*(T^*_\ZZ Q_0).	
	\]
	\label{def:affinedifferentials}
\end{definition}
We will call the sections of this sheaf the tropical sections, and denote them  $\phi \in \dTrop(U)$.\footnote{This is an abuse of notation, as there may not be a globally defined function whose differential describes this section.
However, this will make the remainder of our discussion consistent with the notation used to construct tropical Lagrangians.} 
Given a tropical section $\phi$, we denote the locus of non-linearity as $V(\phi)\subset Q$. 
Should $\phi$ have a representation in each chart by a smooth tropical polynomial, we say that $\phi$ is smooth.

\begin{remark}
	A point of subtlety: the quotient defining the sheaf of tropical sections is performed over $Q$, not $Q_0$. 
	Importantly, while the presheaves \begin{align*}
		i_*(\Omega^1_{\aff})/_{\pre}i_*(T^*_\ZZ(Q\setminus \Delta))\\
		i_*(\Omega^1_{\aff}/_{\pre}T^*_\ZZ(Q\setminus \Delta))
	\end{align*} agree, their sheafifications do not. 
	In particular, the sheaf of tropical differentials remember that in the neighborhood of the discriminant locus, the tropical section must actually arise from a representative tropical differential.
	\label{rem:subtlety}
\end{remark}
The examples drawn in \cref{fig:ada,fig:adb,fig:adc,fig:affinedifferential2} illustrate how each component of \cref{def:affinedifferentials} is being used, and \cref{fig:affinedifferential3} gives an non-example demonstrating the relevance of \cref{rem:subtlety}
\begin{figure}
	\centering
	\begin{subfigure}{.3\linewidth}
		\centering
		\input{figures/affinedifferentials1a.tikz}
		\caption{}\label{fig:ada}
	\end{subfigure}
	\begin{subfigure}{.3\linewidth}
		\centering
		\input{figures/affinedifferentials1b.tikz}
		\caption{}\label{fig:adb}
	\end{subfigure}
	\begin{subfigure}{.3\linewidth}
		\centering
		\input{figures/affinedifferentials1c.tikz}
		\caption{}\label{fig:adc}
	\end{subfigure}
	\caption*{\Cref{fig:ada}: An affine manifold $Q$ with charts $A, B$ and discriminant locus $\times$.\newline\newline
	\Cref{fig:adb}: Contour plots of piecewise linear concave functions defined over the two charts. The differ on overlaps by constants.\newline\newline
	\Cref{fig:adc}: These two functions give the data of a globally defined tropical differential on $Q$, whose locus of non-linearity is drawn in red.}
	\label{fig:affinedifferential1}
\end{figure} 
\begin{figure}
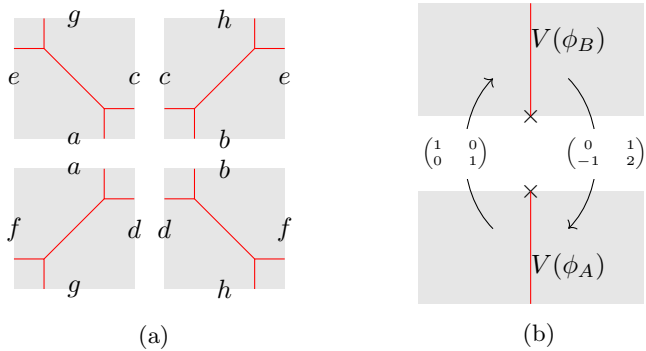

	\centering
	\begin{subfigure}{.4\linewidth}
		\centering
		\input{figures/affinedifferentials2.tikz}
		\caption{}
		\label{fig:affinedifferential2}
	\end{subfigure}
	\begin{subfigure}{.4\linewidth}
		\centering
		\input{figures/affinedifferentials3.tikz}
		\caption{}
		\label{fig:affinedifferential3}
	\end{subfigure}
	\caption*{\Cref{fig:affinedifferential2}: An example of a tropical section defined over the affine torus. The affine torus is covered with 4 charts, and over each chart the tropical section is defined by a tropical differential (whose locus of nonlinearity is drawn in red). Globally, there is no tropical differential with the specified locus of nonlinearity.\newline\newline
	\Cref{fig:affinedifferential3}: A \emph{nonexample} of a tropical section, from \cref{rem:subtlety}. $\phi_A$ and $\phi_B$ define tropical differentials on $A$ and $B$ respectively; after modding out by $T^*_\ZZ Q$ they assemble to a well defined section \emph{outside of the discriminant locus.} 
	However, there is no tropical differential with prescribed locus of nonlinearity on any chart containing the discriminant locus.}
\end{figure}
When $Q=\RR^n$, there is no difference between the global sections of $\dTrop$ and the differentials of global tropical polynomials.

Given a triple $(Q, \mathcal P, \phi)$, one can construct a dual triple  $(\check Q, \check{\mathcal P}, \check \phi)$ using a process called the discrete Legendre transform.
Away from the boundary the base manifolds $Q$ and $\check Q$ agree as topological spaces, however their affine structures differ at the singular points.
At the boundary these spaces are modified so that the non-compact facets of $Q$ are compactified in $\check Q$ and vice-versa.
The simplest example of this phenomenon is when $\check Q= \Delta_\Sigma\subset \RR^2$ is a compact polytope.
The Legendre dual to $\check Q$ is the plane $Q=\RR^2$, equipped with a fan decomposition whose non-compact regions correspond to the boundary vertices of $\check P$.

Given a tropical manifold $Q$, we can produce a torus bundle $X_0=T^*Q_0/T^*_\ZZ Q_0$ over $Q_0$. This space $X_0$ comes with canonical symplectic and almost complex structure arising from the affine structure on $Q_0$. 
In good cases this compactifies to an almost toric fibration $X$ over $Q$.
Similarly, we may produce a associated manifold $\check X$ over $\check Q$. 
The pair of spaces $X$ and $\check X$ are candidate mirror spaces.
When $Q$ is non-compact we expect that $Q$ is equipped with additional data in the form of a monomial admissibility condition or stops in order to obtain a meaningful mirror symmetry statement.
This admissibility condition should be constructed by considering the open Gromov Witten invariants of $\check F_p$.
The computation of these invariants is beyond the scope of our exposition, and we'll be content with constructing our admissibility conditions in an ad-hoc manner.

\subsubsection{Some examples of tropical sections}

A running example that we will use is the symplectic manifold $\CP^2\setminus E$. One can construct an almost toric fibration for $\CP^2\setminus E$ by starting with the toric base diagram for $\CP^2$.
By applying nodal trades at each corner, we obtain a toric fibration $\overline{\val}:\CP^2\to Q_{\CP^2}$, where the boundary of $ Q_{\CP^2}$ is an affine $S^1$ (see \cref{fig:nodaltrade}).
The preimage of $\val^{-1}(\partial  Q_{\CP^2})=E\subset \CP^2$ is a symplectic submanifold isotopic to a smooth cubic. 
$\CP^2\setminus E$ is the total space of an almost toric fibration over  interior of this set, $Q_{\CP^2\setminus E}= Q_{\CP^2}\setminus \partial Q $.  
The monodromy around the three singular fibers allows us to construct some more interesting tropical sections of $Q$.  We give three such examples of these sections and their associated tropical subvarieties below.
\begin{itemize}
	\item
		Tropical sections which have critical locus close to the boundary of $Q_{\CP^2\setminus E}$. 
		\Cref{fig:tpII} gives an example of such a section. 
		Even though the critical locus appears to have three corners, the affine coordinate change across the branch cuts means that this critical locus is actually an affine circle.
	\item
		The example given in \cref{fig:tpIV} is an example of a tropical section which does not arise as the differential of a globally defined tropical function.
		The critical locus terminates at the nodal point, and points in the direction of the eigenray of the nodal point.
	\item
		 Tropical sections which meet the singular fibers coming from admissible tropical sections as in \cref{fig:tpIII}.
		 This gives us an example of a compact tropical curve in $Q$ of genus 1. 
\end{itemize}

\begin{figure}
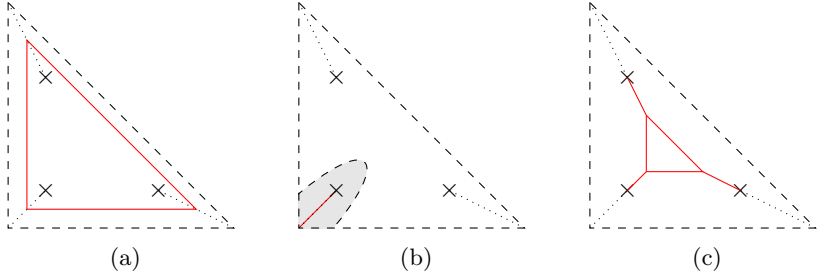

	\centering
	\begin{subfigure}{.3\linewidth}
		\centering
		\input{figures/tropicalpotentialII.tikz}
		\caption{}
		\label{fig:tpII}
	\end{subfigure}
	\begin{subfigure}{.3\linewidth}
		\centering
		\input{figures/tropicalpotentialV.tikz}
		\caption{}
		\label{fig:tpIV}
	\end{subfigure}
	\begin{subfigure}{.3\linewidth}
		\centering
		\input{figures/tropicalpotentialIII.tikz}
		\caption{}
		\label{fig:tpIII}
	\end{subfigure}
	\caption{Tropical subvarieties associated to some tropical sections  on $\CP^2\setminus E$.
	\Cref{fig:tpIV} locally is modelled on \cref{fig:affinedifferential1,fig:affinedifferential2,fig:affinedifferential3}, as is \cref{fig:tpIII} near the discriminant locus.}
\end{figure}

The examples above are typical of the kind of phenomenon which may occur for tropical curves in affine tropical surfaces. 
\begin{definition}
	Let $V\subset Q$ be a tropical curve in an affine tropical surface. 
	We say that $V$ avoids the critical locus if $V$ is disjoint from $\Delta$ and $\partial Q$.  
	We say that the interior of $V$ avoids the critical locus if $V$ is disjoint from $\partial Q$,and at each node $q\in\Delta$,
	there is a neighborhood $B_\epsilon(q)$ so that the restriction of $V\cap B_\epsilon(q)$ is a ray parallel to the eigenray of $q$. 
\end{definition}

\section{Tropical Lagrangians from Dimers}

\label{sec:dimers}
\label{sec:dimer}

We now introduce a combinatorial framework generalizing some of the ideas discussed in \cite[Section 5.2]{matessi2018lagrangian}, and the previous work of \cite{treumann2019kasteleyn,ueda2013homological,shende2019cluster,feng2008dimer}.
\begin{definition}
	A \emph{dimer} is an embedded bipartite graph $G$ on $T^2$ so that $V(G)=V^\circ\sqcup V^\bullet$.
	A \emph{non-embedded dimer} is a bipartite graph $G$ on $T^2$. 
	A zigzag configuration for a dimer $G$ is a set of transverse cycles $\Sigma\subset C_1(T^2)$ satisfying the following conditions:
	\begin{itemize}
		\item
			  Each connected component in $T^2\setminus \Sigma$ contains at most one vertex of $G$.
			  These connected components are called the \emph{dimer faces}, and are indexed by $V(G)$.
		\item
		      Each edge of the dimer is transverse to every cycle. Each edge passes through exactly one intersection point between 2 cycles.
		\item
			The oriented normals of the cycles point outward on the $V^\circ$ dimer faces, and inward on the $V^\bullet$ dimer faces.
	\end{itemize}
\end{definition}
We will now restrict to the setting of \emph{dual dimers}, where $\Sigma$ is a collection of affine cycles.
Denote by $[\Sigma]\subset H_1(T^2)$ the set of homology classes  of cycles in $\Sigma$.
Note that zig-zag configurations $\Sigma$ satisfy a balancing condition $\sum_{[c]\in [\Sigma]} [c]=0$.
It is the case that for every set of homology classes $[\Sigma]\subset H_1(T^2)$ satisfying the balancing condition, we can find a dimer whose zigzag collection $\Sigma$ represents the homology classes $[\Sigma]$ \cite{gulotta2008properly}.
 However, it is not necessarily the case that we can find an \emph{affine} dimer with this property: see \cite[Section 4]{forsgaard2016dimer}.
A dimer picks out an oriented two chain whose boundary is $\Sigma$.
This is similar to the data used in \cite{shende2019cluster}.
\begin{figure}
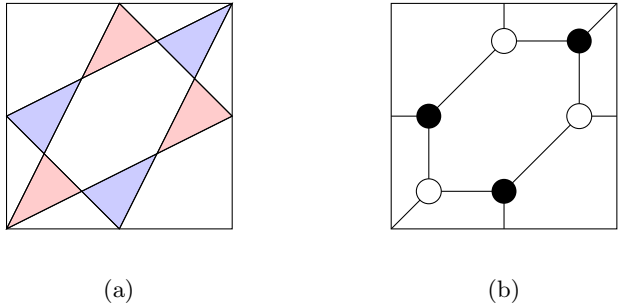

	\centering
	\begin{subfigure}{.24\linewidth}
		\centering
		\input{figures/coamoeba.tikz}
		\caption{}
	\label{fig:hexagonzigzag}
		\end{subfigure}\hspace{2cm}
	\begin{subfigure}{.24\linewidth}
	\centering
	\input{figures/hexagondimer.tikz}
	\caption{}
	\label{fig:hexagondimer}
	\end{subfigure}
		\caption[Dimers and their zigzag graphs]{An example of a dimer and associated bipartite graph.  }
	\end{figure}
More generally, we will consider pairs of the following form:
\begin{definition}
	A \emph{dual $n$-dimer}  is two finite collections of $n$-polytopes 
	\[
		\{\Delta^\circ_v\}, \{\Delta^\bullet_w\}\subset \RR^n
	\] 
	which satisfy the following properties.
	\begin{itemize}
		\item
			  Each vertex set $\{\Delta^\opm_v\}^0$ is a set of distinct points on the torus in the sense that whenever $w_1, w_2\in \{\Delta^\circ_v\}^0$ and $w_1\equiv w_2\mod \ZZ^{n}$,then $w_1=w_2$.
		\item
			  We  require that these two vertex sets match after quotienting by the lattice, 
			  \[
				  \{\Delta^\circ_v\}^0/\ZZ^n=\{\Delta^\bullet_w\}^0/ \ZZ^n.
			\]
		\item
		      Let $p_1\in \Delta^\circ_{v_1}$ be a vertex, and let $p_2\in \Delta^\bullet_{v_2}$ be the corresponding vertex so that $p_1\equiv p_2\mod \ZZ^n$. Let $\{e_{1}, \ldots, e_{k}\}$ be the edges of $\Delta^{\circ}_{v_1}$ containing the vertex $p_1$.
		      We require that the edges of $\Delta^\bullet_{v_2}$ containing $p_2$ point in the opposite directions $\{-e_{1}, \ldots, -e_{k}\}.$
	\end{itemize}
	If the interiors of the $\Delta^\circ_v$ and $\Delta^\bullet_w$ are disjoint $\mod \ZZ^n$, we say that the dual dimer configuration has no self-intersections.

	From this data, we obtain a bipartite graph $G\subset T^{n}$, whose vertices are indexed by  $\{\Delta^\circ_v\}\cup \{ \Delta^\bullet_w\}$, and whose edges are determined by which polytopes in the dual dimer share a common vertex.
	\label{def:dualdimer}
\end{definition} 
We will usually index the polytopes by the vertices $v^{\opm}\in V^\circ\sqcup V^\bullet = V(G)$. 
The edges of the bipartite graph are in bijection with $\{\Delta^\circ_v\}^0=\{ \Delta^\bullet_w\}^0$. 
The graph $G$ need not be embedded. If the polytopes $\{\Delta_v^\opm\}$ are disjoint, then $G$ can be chosen to be embedded.
A dual dimer prescribes the data of a $n$-chain in $T^n$. 
Our requirement that $G$ is bipartite guarantees that this $n$-chain is oriented.

We now briefly explore some of the combinatorics of these dual dimers to produce the data of a tropical hypersurface in $\RR^n$.
\begin{claim}
	The edges of an dual dimer all have rational slope.
\end{claim}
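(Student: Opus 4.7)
The plan is to exhibit, for any edge of the dual dimer, a closed loop in the bipartite graph whose total spatial displacement is a nonzero integer vector parallel to that edge. Rationality of the slope then follows immediately because any nonzero lattice vector parallel to $v$ forces $v\in\QQ\cdot\ZZ^n$.

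The construction I would use is a deterministic walk that always moves in a fixed direction $v$. Start with any edge $e_0$ of some $\Delta^\circ_{u_0}$ with direction $v$ (so $e_0$ runs from $p_0$ to $p_0+\alpha_0 v$ with $\alpha_0>0$). The endpoint $p_1 := p_0+\alpha_0 v$ is a vertex of $\Delta^\circ_{u_0}$. By property 2 of \cref{def:dualdimer}, $p_1$ agrees mod $\ZZ^n$ with a vertex $p_1^\bullet$ of some $\Delta^\bullet_{w_1}$. By the matching condition (property 3), the edge of $\Delta^\circ_{u_0}$ at $p_1$ that came from $e_0$, which points in direction $-v$ from $p_1$, corresponds to an edge of $\Delta^\bullet_{w_1}$ at $p_1^\bullet$ pointing in direction $+v$. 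Walking along this edge gives a new vertex $p_2^\bullet = p_1^\bullet+\alpha_1 v$ with $\alpha_1>0$, which in turn corresponds under $\ZZ^n$ to a vertex of some $\Delta^\circ_{u_1}$, and the matching condition again yields an outgoing edge in direction $+v$. Iterating, I obtain an infinite walk along edges whose directions are all positive multiples of $v$; the walk is deterministic because two edges of a nondegenerate polytope incident to the same vertex have distinct directions.

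Now I would invoke pigeonhole on the \emph{finite} set $(\{\Delta^\circ_v\}^0\cup\{\Delta^\bullet_w\}^0)/\ZZ^n$ of vertices of the dual dimer in $T^n$. The projection of the walk to $T^n$ must revisit some vertex, say the $i$th and $j$th entries of the walk coincide mod $\ZZ^n$ with $i<j$. In $\RR^n$, the displacement accumulated between these two entries is $\bigl(\sum_{k=i}^{j-1}\alpha_k\bigr)v$, a strictly positive multiple of $v$, while the fact that the two endpoints coincide in $T^n$ means this displacement lies in $\ZZ^n$. Thus some positive scalar multiple of $v$ is an integer vector, so $v$ spans a rational line in $\RR^n$, which is precisely the rational slope condition. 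This walk-and-loop argument is essentially the only thing needed; the one minor subtlety to verify carefully is that at each step the matching condition really produces an edge in direction $+v$ and not merely in direction $\pm v$, and this is immediate from the sign bookkeeping above since the arriving edge always points in direction $-v$ at the shared vertex.
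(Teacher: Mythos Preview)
Your proposal is correct and follows essentially the same approach as the paper: extend the given edge by repeatedly invoking the vertex-matching condition to produce parallel edges of alternating color, then use finiteness of the vertex set in $T^n$ to close up into a cycle whose homology class in $H_1(T^n,\ZZ)\cong\ZZ^n$ is a nonzero multiple of the edge direction. The paper's proof is simply a terser version of your walk-and-pigeonhole argument.
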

\begin{proof}
Let $e$ be an edge of $\Delta_v^\circ$, with ends on vertices $p_-,p_+\in \{\Delta_v^\opm\}^0$. 
From our definition of a dual dimer, there exists an edge $e_-$ in some $\Delta_{w}^\bullet$ which also has end on $p_-$ and is parallel to $e$. By concatenating $e_-$ and $e_+$, we obtain a line segment.
By repeating this process, we obtain an affine representative of a cycle in $H_1(T^n, \ZZ)$ associated to each edge $e$. 
\end{proof}

\begin{claim}
	Let $\{\Delta^\circ_v\}, \{\Delta^\bullet_w\}$ be a dual $n$-dimer. Let $\alpha$ be a facet of some $\Delta^\bullet_v$. Consider $T^\alpha\subset T^n$, the affine $(n-1)$ subtorus spanned by $\alpha$.
	The set of $(n-1)$ polytopes $\Delta_\beta^\opm$ given by the facets of our original set of polytopes which satisfy
	\begin{align*}
		\{\Delta_\beta^\bullet\;|\; \text{$\beta$ is a facet of $\Delta^\bullet$, $\beta\subset T^\alpha$}\}\\
		\{\Delta_\beta^\circ\;|\; \text{$\beta$ is a facet of $\Delta^\circ$, $\beta\subset T^\alpha$}\}
	\end{align*}
	is the data of an $(n-1)$ dimer on $T^\alpha$. 
\end{claim}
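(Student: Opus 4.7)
The plan is to verify the three axioms of \Cref{def:dualdimer} for the collections $\{\Delta_\beta^\bullet\}$ and $\{\Delta_\beta^\circ\}$ defined in the statement; the substantive work is to exhibit, for every $\bullet$-facet $\beta$ lying in a lift of $T^\alpha$, a matching $\circ$-facet $\beta'$ in the same lift. First I would confirm that $T^\alpha \subset T^n$ is a well-defined affine $(n-1)$-subtorus: by the previous claim all edges of $\Delta_v^\bullet$ are rational, so the affine hull of $\alpha$ has rationally-generated direction, and this descends under $\RR^n \to T^n$ to a closed affine $(n-1)$-subtorus. The candidate vertex sets on $T^\alpha$ are then the vertices of facets $\beta$ that lie in a lift of $T^\alpha$, collected separately on the $\bullet$- and $\circ$-sides.

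The central step is the construction of $\beta'$. Fix a $\bullet$-facet $\beta \subset T^\alpha$, realized as a facet of $\Delta_v^\bullet$ through a vertex $p$. The $n-1$ edges $e_{i_1},\ldots,e_{i_{n-1}}$ of $\beta$ at $p$ lie in a supporting hyperplane $H$ of $\Delta_v^\bullet$ at $p$, and all remaining edges of $\Delta_v^\bullet$ at $p$ lie strictly on one side of $H$. Invoking the dual-dimer edge-matching at $p$ produces a vertex $p' \in \Delta_w^\circ$ with $p' \equiv p \pmod{\ZZ^n}$ whose edges at $p'$ are the negatives of those at $p$. Since a linear hyperplane is symmetric under negation, the translate of $H$ through $p'$ still contains $-e_{i_1},\ldots,-e_{i_{n-1}}$, while the remaining edges at $p'$ land strictly on the opposite side. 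Hence this translated hyperplane is a supporting hyperplane for $\Delta_w^\circ$ at $p'$ and cuts out a facet $\beta'$ of $\Delta_w^\circ$ through $p'$, which by construction lies in the lift of $T^\alpha$ passing through $p'$.

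With the correspondence $\beta \leftrightarrow \beta'$ in hand, the three axioms of a dual $(n-1)$-dimer follow readily. Distinctness of $\{\Delta_\beta^\bullet\}^0$ modulo the lattice of $T^\alpha$ is inherited from the distinctness modulo $\ZZ^n$ in the ambient dimer, since the lattice of $T^\alpha$ sits inside $\ZZ^n$. The matching $\{\Delta_\beta^\bullet\}^0/\ZZ^{n-1} = \{\Delta_\beta^\circ\}^0/\ZZ^{n-1}$ follows from the construction above combined with the symmetric argument swapping the roles of $\bullet$ and $\circ$. Finally, the opposite-edge condition at each corresponding pair $(p,p')$ is a direct consequence of the ambient matching: the edges of $\beta$ at $p$ are a subset of the edges of $\Delta_v^\bullet$ at $p$, and by construction $\beta'$ carries the corresponding negatives at $p'$.

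The one subtle point to keep an eye on is that the negatives of the $n-1$ edges of $\beta$ actually span a genuine \emph{facet} of $\Delta_w^\circ$, rather than merely a lower-dimensional face. This is precisely where the strength of the dual-dimer axiom is used: the matching controls \emph{all} edges at a paired vertex, so the one-sided separation property of the hyperplane $H$ transports faithfully under sign reversal, yielding the required supporting hyperplane at $p'$.
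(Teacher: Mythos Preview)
The paper states this claim without proof, so there is no argument to compare against directly. Your proposal is correct and supplies the omitted details. The key observation---that the dual-dimer edge matching at a vertex $p$ transports a supporting hyperplane of $\Delta_v^\bullet$ to a supporting hyperplane of $\Delta_w^\circ$ at the matched vertex $p'$, because negation preserves the linear hyperplane while flipping the strict side---is exactly what is needed, and your final paragraph correctly isolates why the resulting face $\beta'$ is genuinely $(n-1)$-dimensional (the edges of $\beta$ at $p$ linearly span the direction of $H$, hence so do their negatives at $p'$).

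One cosmetic point: you write ``the $n-1$ edges $e_{i_1},\ldots,e_{i_{n-1}}$ of $\beta$ at $p$,'' which tacitly assumes the polytope is simple. In general a facet may have more than $n-1$ edges at a vertex; the argument goes through unchanged if you speak only of ``the edges of $\beta$ at $p$'' and use that the tangent cone of any polytope at a vertex linearly spans its affine hull. This does not affect the validity of your proof.
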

By induction, we get the same result for all faces.
\begin{corollary}
	Let $\alpha$ be a $k$-face of some $\Delta^\bullet_v$. Consider $T^\alpha\subset T^n$, the affine sub-torus spanned by $\alpha$.
	The set of $k$ polytopes given by the $k$-faces satisfying
	\begin{align*}
		\{\Delta_\beta^\bullet\;|\; \text{$\beta$ is a $k$-face of $\Delta^\bullet$, $\beta\subset T^\alpha$}\}\\
		\{\Delta_\beta^\circ\;|\; \text{$\beta$ is a $k$-face of $\Delta^\circ$, $\beta\subset T^\alpha$}\}
	\end{align*}
	is a dual $k$-dimer of $T^\alpha$.
\end{corollary}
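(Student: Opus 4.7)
The plan is to proceed by downward induction on $k$. The base case $k = n-1$ is the preceding claim verbatim: $\alpha$ is then a facet of $\Delta^\bullet_v$, and the facets of original polytopes lying in $T^\alpha$ form a dual $(n-1)$-dimer.

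For the inductive step, suppose the corollary is established for $(k+1)$-faces, and let $\alpha$ be a $k$-face of some $\Delta^\bullet_v$. Choose any $(k+1)$-face $\alpha'$ of $\Delta^\bullet_v$ with $\alpha \subset \alpha'$, so that $T^\alpha \subset T^{\alpha'}$ and $\alpha$ is a facet of the $(k+1)$-polytope $\alpha'$. Applying the induction hypothesis to $\alpha'$ produces a dual $(k+1)$-dimer on $T^{\alpha'}$, whose polytopes are precisely the $(k+1)$-faces of the original $\Delta^\opm$'s lying in $T^{\alpha'}$; in particular $\alpha'$ itself is one of its black polytopes. Now apply the preceding claim in the ambient torus $T^{\alpha'}$, treating $\alpha$ as a facet of the black polytope $\alpha'$, to conclude that the $k$-faces of the $(k+1)$-polytopes of this dimer which lie in $T^\alpha$ assemble into a dual $k$-dimer on $T^\alpha$.

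It remains to identify this collection with the one described in the corollary, namely all $k$-faces of the original $\Delta^\opm_w$'s that are contained in $T^\alpha$. One inclusion is immediate from transitivity of the face relation. The reverse inclusion --- that every $k$-face $\beta \subset T^\alpha$ of any $\Delta^\opm_w$ arises as a facet of some $(k+1)$-face of $\Delta^\opm_w$ which itself lies in $T^{\alpha'}$ --- is the delicate step, since a priori the $(k+1)$-faces of $\Delta^\opm_w$ containing $\beta$ might span $(k+1)$-planes through $T^\alpha$ different from $T^{\alpha'}$. To handle it, I would examine a vertex $p$ of $\beta$ and its matching vertex $p'$ on the opposite-colored polytope (which lies in $T^\alpha$ by the matching-vertex axiom); the opposite-edges axiom of \cref{def:dualdimer} forces the edges of $\Delta^\opm_w$ at $p$ to match, up to sign, with the edges at $p'$ of the opposite color, and tracing these through the local picture at the corresponding vertex of $\alpha$ in $\Delta^\bullet_v$ singles out a bounding $(k+1)$-face of $\Delta^\opm_w$ whose affine span is $T^{\alpha'}$. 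I expect this combinatorial matching at vertices to be the main obstacle, as the rest of the argument reduces to a clean iteration of the preceding claim.
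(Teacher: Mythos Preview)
Your approach is exactly the paper's: the entire proof in the paper is the single sentence ``By induction, we get the same result for all faces,'' placed just before the corollary. So the inductive scheme you describe---base case the preceding claim, inductive step by choosing a $(k+1)$-face $\alpha'\supset\alpha$, applying the hypothesis on $T^{\alpha'}$, and then the claim again---is precisely what the author has in mind.

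You have, however, been more scrupulous than the paper. The ``delicate step'' you isolate---that every $k$-face $\beta\subset T^\alpha$ of some $\Delta^{\opm}_w$ must occur as a facet of a $(k+1)$-face of $\Delta^{\opm}_w$ lying in the particular $T^{\alpha'}$ you chose---is a genuine point the paper simply does not address. Your proposed resolution via the opposite-edges axiom at a shared vertex is the right idea: at a vertex $p$ of $\beta$ the edge directions of $\Delta^{\opm}_w$ agree (up to sign) with those of $\Delta^\bullet_v$ at the corresponding vertex of $\alpha$, so the $(k+1)$-face of $\Delta^{\opm}_w$ spanned by the edges matching those of $\alpha'$ lies in $T^{\alpha'}$. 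This is enough to close the gap, and with it your argument is complete and strictly more detailed than the paper's.
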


Each of these $k$ dimensional dual dimers gives the data of a $k$-chain in $T^\alpha$. 
We denote these $k$-chains of $T^n$,
\[
	\{\underbar{U}^\beta\;|\; \text{$\beta$ is a $k$-face}\}\subset C_k(T^n, \ZZ).
\]
This can also be thought of an equivalence relation on the set of $k$-faces of the dual-dimer, where two faces are equivalent if they define the same dual $k$-dimer chain. 
A \emph{cone} is the real positive span of a finite set of vectors. Given a cone $V\subset \RR^n$, a subspace $U\subset \RR^n$, the $U$-relative dual cone of $V$ is 
\[
	V^{\vee|U}:=\{u\in U \; | \;  \langle u, V\rangle \geq 0\}.	
\]
To each $k$-chain $\underbar{U}^\beta$ we can associate a cone in $\RR^n$.
\begin{definition}
	Let $\underbar{U}^\beta$ be a chain given by a face $\beta\subset \Delta^\opm_v$.
	Assume that we have translated $\Delta^\opm_v$ so that the origin is an interior point of the face $\beta$. 
	Let $\RR^\beta$ be the affine subspace generated by $\beta$.
	Let $(\RR^\beta)^\bot$ be the corresponding perpendicular subspace. 
	We define the dual cone to the face $\underbar{U}^\beta$ to be 
	\[
		\underbar{U}_\beta:=\left\{
			\begin{array}{cl} 
				(\RR_{\geq 0}\cdot \Delta^\bullet_v)^{\vee|(\RR^\beta)^\bot} & \text{ If $\beta$ belongs to a $\bullet$ polytope}\\
				-(\RR_{\geq 0}\cdot \Delta^\circ_v)^{\vee|(\RR^\beta)^\bot}& \text{ If $\beta$ belongs to a $\circ$ polytope}
		\end{array} \right.
	\]
\end{definition}
Suppose that $\alpha$ and $\beta$ are facets in the same dual $k$-dimer so that $\underbar U^\alpha=\underbar U^\beta$.
Let $\alpha\subset \Delta_v^\bullet$, and suppose that $\beta\subset \Delta_w^\bullet$. After translating $\Delta_v^\bullet$ and $\Delta_w^\bullet$  so that $0\in \alpha$ and $0\in \beta$, we get an agreement of the cones $\RR_{\geq 0}\cdot \Delta^\bullet_v = \RR_{\geq 0}\cdot \Delta^\bullet_w$. 
Similarly, if $\gamma\subset \Delta_u^\circ$ and $\underbar U^\gamma=\underbar U^\alpha$, then $\RR_{\geq 0} \cdot \Delta^\bullet_v = -\RR_{\geq 0} \cdot \Delta^\circ_u$. It follows that:
\begin{claim}
	If $\underbar{U}^\alpha\subseteq \underbar{U}^\beta$, then  $\underbar{U}_\alpha\supseteq \underbar{U}_\beta$
\end{claim}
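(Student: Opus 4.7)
The plan is to exploit the fact that the relative dual cone operation $V \mapsto V^{\vee|U}$ is order-reversing on cones sharing a compatible lineality space: if $V \subseteq V'$, then $V^{\vee|U} \supseteq V'^{\vee|U}$. The hypothesis is a containment of (supports of) chains in $T^n$, and I want to convert it into a containment between the tangent cones appearing in the definitions of $\underbar{U}_\alpha$ and $\underbar{U}_\beta$.

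First I would unpack the hypothesis. Each chain $\underbar{U}^\gamma$ is supported on an affine subset of $T^n$ whose dimension matches that of the face $\gamma$, so $\underbar{U}^\alpha \subseteq \underbar{U}^\beta$ forces $\dim \alpha \leq \dim \beta$ and the affine span of $\alpha$ to sit inside a translate of the affine span of $\beta$. Using the equivalence relation --- two faces identified when they determine the same dual $k$-dimer chain --- I can walk along $\underbar{U}^\alpha$ into a common polytope and assume that representatives of $\alpha$ and $\beta$ are faces of a single $\Delta^\opm_v$ with $\alpha$ a subface of $\beta$. This step uses the vertex matching of \cref{def:dualdimer}: at any vertex of the bipartite graph the edges of a $\circ$ polytope and a $\bullet$ polytope meeting there are antipodal, which is precisely what allows the $k$-chain $\underbar{U}^\alpha$ to be continued through the dimer without changing its underlying affine data.

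With the reduction in place, translate $\Delta^\opm_v$ so that $0$ lies in the interior of $\alpha$. Since $\alpha \subseteq \beta$, we have $\RR^\alpha \subseteq \RR^\beta$, and therefore $(\RR^\beta)^\bot \subseteq (\RR^\alpha)^\bot$. Let $C_\alpha = \RR_{\geq 0} \cdot \Delta^\opm_v$ with this choice of origin, and let $C_\beta$ denote the analogous cone after re-translating the origin into the interior of $\beta$; the two differ by a translation vector $t \in \RR^\beta$. A direct inspection gives $C_\alpha \subseteq C_\beta$, since the tangent cone to a polytope enlarges as one moves from a face to a larger face containing it --- its lineality space grows from $\RR^\alpha$ to $\RR^\beta$. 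For any $u \in (\RR^\beta)^\bot$, the pairing $\langle u, c \rangle$ is unchanged when $c$ is replaced by $c + t$, so
\[
C_\beta^{\vee|(\RR^\beta)^\bot} = C_\alpha^{\vee|(\RR^\beta)^\bot} = C_\alpha^{\vee|(\RR^\alpha)^\bot} \cap (\RR^\beta)^\bot \subseteq C_\alpha^{\vee|(\RR^\alpha)^\bot}.
\]
This is exactly $\underbar{U}_\beta \subseteq \underbar{U}_\alpha$; the sign convention distinguishing $\circ$ from $\bullet$ polytopes in the definition of $\underbar{U}_{(-)}$ is consistent because, after the reduction, both $\alpha$ and $\beta$ lie in the same $\Delta^\opm_v$ and the overall sign cancels.

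The main obstacle is the reduction step: a priori the chains $\underbar{U}^\alpha$ and $\underbar{U}^\beta$ are assembled from faces of several polytopes of different colors and different indices, and producing a single polytope hosting representatives of both $\alpha$ and $\beta$ requires exploiting the combinatorics of how the polytopes glue along shared faces. Once the local model is in place the duality argument is essentially formal, consisting of the order-reversal of $(\cdot)^\vee$ combined with the observation that translating by a vector in $\RR^\beta$ does not affect pairings with $(\RR^\beta)^\bot$.
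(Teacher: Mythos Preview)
Your approach is essentially the same as the paper's, only more explicit. The paper treats this claim as an immediate consequence of the preceding paragraph: once one knows that the tangent cone $\RR_{\geq 0}\cdot \Delta^{\opm}_v$ (translated so that the origin lies in the chosen face) depends only on the chain $\underbar U^\alpha$ and not on the particular representative, the claim ``follows''. You spell out what the paper leaves implicit --- the reduction to representatives $\alpha\subset\beta$ in a single polytope, and the elementary order-reversal of the relative dual --- and you correctly flag the reduction as the only substantive step.

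Two small remarks. First, your justification of the equality $C_\beta^{\vee|(\RR^\beta)^\bot}=C_\alpha^{\vee|(\RR^\beta)^\bot}$ via ``translation by $t\in\RR^\beta$'' is slightly imprecise: the cones $C_\alpha$ and $C_\beta$ are not translates of one another, but rather $C_\beta=C_\alpha+\RR^\beta$. The conclusion is the same, since pairing against $u\in(\RR^\beta)^\bot$ kills the added lineality space. Second, the reduction can be phrased a bit more cleanly using the paper's own Corollary: the dual $(\dim\beta)$-dimer in $T^\beta$ restricts to the dual $(\dim\alpha)$-dimer in $T^\alpha$, so any polytope of the former meeting $T^\alpha$ already has a face representing $\underbar U^\alpha$; that polytope, together with the ambient $\Delta^{\opm}_v$ it came from, furnishes the common host you want. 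This avoids the ``walk along $\underbar U^\alpha$'' language and makes the step feel less ad hoc.
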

This also shows that the definition of the cone is really only dependent on the data of the $k$-chain represented by the choice of face $\alpha$, in that $\underbar U_\alpha=\underbar U_\beta$ whenever $U^\alpha = U^\beta$.  
Consider the polyhedral complex containing the subset $U^\beta$. This complex satisfies the \emph{zero tension condition}, and therefore describes a tropical subvariety of $\RR^n$. 
We will denote this tropical hypersurface by $V$, and the codimension-$k$ strata of this tropical hypersurface by $V^k$.
\subsection{Dimer Lagrangians}
From the data of a dimer, we now construct a Lagrangian inside of $X=(\CC^*)^n$. 
The construction of these Lagrangians are similar to the construction of tropical Lagrangians in \cite[sections 3.1, 3.2]{hicks2020tropical}.
Let $Q=\RR^n$, and $T^*_\ZZ Q$ be the lattice in the cotangent bundle generated by $dq_1, \ldots, dq_n$.
We give $X$ the symplectic structure via identification with $T^*Q/T^*_\ZZ Q$, and let $\val: X\to Q$ be the valuation projection. The fibers of this projection are Lagrangian tori. 
We denote by $\arg: X\to (T^*)_0Q/(T^*_\ZZ)_0Q=T^n$ the argument projection to a torus fiber. 
The Newton polytope of a piecewise linear function $\phi: Q\to \RR$ is the convex hull of the projection of $\Im(d(\phi))$ to $(T^*)_0Q=\RR^n$, taken wherever the derivative is defined.
\label{subsec:dimerlagrangians}
\begin{definition}
	Let $\Delta_v\subset \RR^n$ be a polytope.
	The \emph{convex dual tropical function} $\phi^\circ_v:Q\to \RR$ is the convex piecewise strictly\footnote{Here, strictly linear means not just that the derivative is constant, but the extension to all of $Q$ sends the origin to $0$. It is worth pointing this out, as the standard convention is to use piecewise linear to mean piecewise affine.} linear function with Newton polytope $\Delta_{v}$.
	Similarly, define $\phi^\bullet_v$ to be the concave dual tropical function, $\phi^\bullet_v=-\phi^\circ_{v}$.
\end{definition}
Given a dual dimer $\{\Delta^\circ_v\},\{ \Delta^\bullet_w\}$, let $\{\phi^\circ_v\},\{ \phi^\bullet_w\}$ be the associated dual tropical functions.
Following \cite{hicks2020tropical}, let  $\tilde\phi^{\opm}_\rho: \RR^n\to \RR$ be smoothings of the convex functions by a kernel $\rho: \RR^n\to \RR$ of small radius $R$.
We add a small constant to this function so that $\tilde\phi^{\opm}_\rho(0)=0$.
\begin{definition}[\cite{abouzaid2009morse}]
	The \emph{tropical Lagrangian section }$\sigma_{\phi, \rho}: Q\to X$ associated to $\phi$ is  the composition
	\[
		\begin{tikzcd}
			T^*Q\arrow{r}{ / T^*_\ZZ Q } & X\\
			Q\arrow{u}{d\tilde \phi_\rho}
		\end{tikzcd}.
	\]	
	\label{def:tropicallagsection}
\end{definition}
For convenience of notation, when the choice of smoothing kernel is unimportant, we suppress it and simply write $\sigma_\phi$. 
Furthermore, given the data of $\{\Delta^\circ_v\},\{ \Delta^\bullet_w\}$, we set 
$\sigma^{\opm}_k:= \sigma_{\phi^{\opm}_k}$. 
We glue together the tropical Lagrangian sections $\sigma^{\opm}_k$ along their overlapping regions. 
\begin{claim}
	Let $\{\Delta^\circ_v\},\{ \Delta^\bullet_w\}$ be a dual dimer configuration without self-intersections.
	There is a decomposition of the intersections of the $\sigma^\circ_{v,\rho}$ and $\sigma^\bullet_{w,\rho}$,
	\[
		\bigcup_{v, w\in G} \sigma_{v,\rho}^\bullet\cap \sigma_{w,\rho}^\circ =\bigcup_{e\in G} U_{e,\rho}.
	\] 
	The argument projection of each component $U_{e,\rho}$ is the shared corner between polytopes $\Delta^\bullet_w, \Delta^\circ_v$.
	Furthermore, the sections $\sigma^{\opm}_{k,\rho}$ have intersections with collared boundaries in the sense of \cref{prop:generalizedsurgeryprofile} at each of the $U_{e,\rho}$.
\end{claim}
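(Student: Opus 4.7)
The plan is to locate and analyze the intersection components by tracking the image of each gradient map. For the mollified convex tropical function $\tilde\phi^\circ_{v,\rho}$, the image of $d\tilde\phi^\circ_{v,\rho}$ in $(T^*)_0 Q = \RR^n$ sits in an $O(R)$-neighborhood of the Newton polytope $\Delta^\circ_v$: on the interior of the dual cell of a vertex $p \in (\Delta^\circ_v)^0$ (away from its $R$-collar) the gradient equals $p$ exactly, while within the smoothing collar around a codimension-$k$ face $\alpha$ of $\Delta^\circ_v$ it sweeps out an $R$-thickening of $\alpha$. The same holds for $\tilde\phi^\bullet_{w,\rho}$ with polytope $\Delta^\bullet_w$. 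The sections $\sigma^\circ_{v,\rho}$ and $\sigma^\bullet_{w,\rho}$ meet at $q$ precisely when $d\tilde\phi^\circ_v(q) - d\tilde\phi^\bullet_w(q) \in \ZZ^n$, so I need to decide when the images of such a pair of gradient maps can coincide modulo $\ZZ^n$.

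First I would handle the vertex-vertex coincidences. In the ``deep interior'' region where the two gradients are constant vectors $p^\circ \in (\Delta^\circ_v)^0$ and $p^\bullet \in (\Delta^\bullet_w)^0$, the sections agree iff $p^\circ \equiv p^\bullet \pmod{\ZZ^n}$. By the matching axiom of \cref{def:dualdimer} these matched vertices are exactly the edges $e \in E(G)$, and the corresponding component $U_{e,\rho}$ is (up to its collar) the open region in $Q$ where both sections are locked to the shared corner. Its argument projection is therefore constant and equal to that shared corner, as claimed.

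The second step is to exclude spurious coincidences in the transition collars of the mollifications. At a shared corner $p$, the opposite-direction-of-edges axiom in \cref{def:dualdimer} forces any edge of $\Delta^\bullet_w$ emanating from $p$ to point in the direction opposite to the corresponding edge of $\Delta^\circ_v$. Consequently the $R$-thickenings of these edges leave $p$ on opposite sides of $\RR^n$ and, for $R$ sufficiently small relative to the $\ZZ^n$-spacing of the vertex sets, cannot coincide modulo $\ZZ^n$ except at $p$ itself. Inducting on the dimension of the face, using the reduction to a lower-dimensional dual dimer from the preceding corollary, eliminates coincidences along faces of any codimension. Combining the two steps yields the desired decomposition $\bigcup_e U_{e,\rho}$.

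For the collared boundary condition of \cref{prop:generalizedsurgeryprofile}, I would work locally near a single $U_{e,\rho}$ corresponding to a shared corner $p$. Subtracting $\langle p, q\rangle$ from both tropical functions (which does not change the sections modulo $\ZZ^n$) makes both $\tilde\phi^\circ_{v,\rho}$ and $\tilde\phi^\bullet_{w,\rho}$ vanish on the deep interior of $U_{e,\rho}$. The radial coordinate $t$ of the $R$-collar around $\partial U_{e,\rho}$ provides the collar required by \cref{prop:generalizedsurgeryprofile}, and by construction of the mollification the difference $f = \tilde\phi^\bullet_{w,\rho} - \tilde\phi^\circ_{v,\rho}$ depends only on $t$ in this collar, is strictly decreasing and convex in $t$, and vanishes outside of it, realizing one section as the graph of $df$ relative to the other inside a small Weinstein neighborhood. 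The main obstacle I anticipate is formalizing the controlled, transversal nature of the sweep performed by the mollified gradient near faces of all codimensions; once that is in place, the decomposition, the identification of the arg projection, and the collar data all reduce to a local model argument generalizing \cite[Section 3]{hicks2020tropical}.
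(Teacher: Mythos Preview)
Your approach is correct and aligns with the paper's justification, which is in fact only a one-sentence remark after the claim: the collared structure at each $U_{e,\rho}$ ``follows from the convexity/concavity of the primitive functions $\phi^{\opm}_{k,\rho}$,'' with the details deferred to \cite[Section 3]{hicks2020tropical}. You are supplying considerably more detail than the paper does.

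One simplification worth noting: your second step, excluding spurious coincidences via the opposite-edge axiom and an induction on face dimension, is more elaborate than necessary. The no-self-intersection hypothesis does the work directly. Since the image of $d\tilde\phi^\circ_{v,\rho}$ lies in $\Delta^\circ_v$ (convex hull of the gradient values) and likewise $d\tilde\phi^\bullet_{w,\rho}$ lies in $\Delta^\bullet_w$, and these polytopes have disjoint interiors modulo $\ZZ^n$ by assumption, the two sections can only meet where the polytopes themselves meet modulo $\ZZ^n$ --- namely at shared vertices. The opposite-edge axiom is then only needed for the local collared structure near each shared vertex, not for the global decomposition. This is also why the paper singles out the no-self-intersection hypothesis in the statement and remarks afterward that dropping it produces exactly the extra intersections you are working to rule out.

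Your anticipated obstacle --- verifying that $f=\tilde\phi^\bullet_{w,\rho}-\tilde\phi^\circ_{v,\rho}$ literally depends only on the collar coordinate $t$ --- is real, and the paper does not address it beyond citing \cite{hicks2020tropical}; in practice one chooses the collar so that its level sets are the level sets of $f$, which convexity of $-f$ makes possible.
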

The structure of a collared boundary on the intersections $U_{e,\rho}$ follows from the convexity/concavity of the primitive functions $\phi^{\opm}_{k,\rho}$. 
If the dual dimer has self-intersections, there will be possibly be additional intersections between the $\sigma^{\opm}$ which overlap in the argument projection.
We will explore the effect of these additional intersections in \cref{exam:immersed,subsec:mutations}.

We now examine the intersection locus $U_{e,\rho}$ associated to an edge  $e=vw$ in more detail. 
This set $U_{e,\rho}$ describes one of the regions on where $\tilde \phi^\bullet_{w,\rho}$ is linear. 
Since $\tilde \phi^\bullet_{w,\rho}$ is a smoothing of $\phi^\bullet_w$, $U_{e, \rho}$ approximates one of the regions of linearity of $\phi^\bullet_w$, where the accuracy of this approximation can be characterized by the radius $R$ of the smoothing kernel $\rho$. 
Let  $\beta$ be the common vertex of the two dimer polytopes corresponding to the edge $e$. 
\cite[Proposition 3.11]{hicks2020tropical} characterizes the relationship between $U_{e, \rho}$ and the tropical hypersurface by:
\[U_{e,\rho}=\{q\in \underbar U^\beta \;|\; B_R(q)\subset \underbar U^\beta\}.\]
It is worth noting that the boundary $\partial U_{e, \rho}$ is a level set of $\phi^\bullet_w$.
Since $\phi^\bullet_w$ matches $\tilde \phi^\bullet_{w,\rho}$ over $U_{e, \rho}$, the boundary of $\partial U_{e, \rho}$ is also a level set of the smoothed tropical primitive. 
The height of this level set
\begin{equation}
	\tilde \phi^\bullet_{\rho, r}|_{\partial U_{e, \rho}} \propto R
	\label{eq:levelatneck}
\end{equation} 
is linearly proportional to the smoothing radius $R$ chosen.
Because by construction $\tilde\phi^{\opm}_{\rho, r}(0)=0$, the constant of proportionality is nonzero.

\begin{definition}
	Let $\{\Delta^\circ_v\},\{ \Delta^\bullet_w\}$ be a dual dimer. 
	Let $\mathcal D:=\{\rho, \{r_e, s_e\}_{e\in E}\}$ be a choice of smoothing parameter, and surgery profile function for each $e\in E$.
	The \emph{dimer Lagrangian} is the Lagrangian connect sum
	\[
		L^{\mathcal D}(\phi^\bullet_w, \phi^\circ_v):=  \sigma^\circ_{v,\rho} \underset{U_e\;|e\in G,}{\#^{r_e, s_e}} \sigma_{w,\rho}^\bullet.
	\]
	\label{def:dimerlagrangian}
\end{definition}
\Cref{fig:1dimexample} gives an example of these dimer Lagrangians, where the dimer considered is the 4-cycle on $S^1$, lifting to a Lagrangian $S^1\subset \CC^*$ by surgering together 4 tropical sections.
Because  $U_{e,\rho}$ approximates  $\underbar{U}_\beta$, the valuation of a dimer Lagrangian is approximates the tropical hypersurface $V$ associated to the dimer. 
By the argument projection property of \cite[Theorem 3.17]{hicks2020tropical} the surgery of Lagrangian sections does not change the argument projection.
\begin{claim}
	The image of $\arg (L^{\mathcal D}(\phi^\bullet_w, \phi^\circ_v))$ is $\{\Delta^\bullet_w, \Delta^\circ_v\}\subset T^n$. 
	\label{claim:argument}
\end{claim}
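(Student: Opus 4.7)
The plan is to reduce the claim to two pieces: a direct computation of the argument projection of each tropical Lagrangian section $\sigma^{\opm}_{k,\rho}$, followed by an application of the argument-projection property of the Lagrangian connect sum (\cite[Theorem 3.17]{hicks2020tropical}) to assemble the answer for $L^{\mathcal D}$.

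First I would compute $\arg\circ\sigma^{\opm}_{k,\rho}$. By \cref{def:tropicallagsection}, $\sigma^{\opm}_{k,\rho}$ is the image of $d\tilde\phi^{\opm}_{k,\rho}$ under the quotient $T^*Q\to T^*Q/T^*_\ZZ Q=X$, so composition with $\arg: X\to T^n$ sends $q\mapsto [d\tilde\phi^{\opm}_{k,\rho}(q)]\in T^n$. I then want to identify the image of this map with the Newton polytope $\Delta^{\opm}_k$. On each region of linearity of the piecewise linear primitive $\phi^{\opm}_k$, the differential $d\phi^{\opm}_k$ equals a vertex of the Newton polytope $\Delta^{\opm}_k$. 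Because $\tilde\phi^{\opm}_{k,\rho}=\phi^{\opm}_k*\rho$ is obtained by convolution with a small bump, $d\tilde\phi^{\opm}_{k,\rho}(q)$ is a convex combination of the finitely many values of $d\phi^{\opm}_k$ occurring in the $R$-ball about $q$. Hence the image of $d\tilde\phi^{\opm}_{k,\rho}$ is contained in $\Delta^{\opm}_k$, and by varying $q$ across the smoothed corners it sweeps out all of $\Delta^{\opm}_k$. Thus
\[
\arg(\sigma^{\opm}_{k,\rho})=\Delta^{\opm}_k\subset T^n.
\]

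Next I would invoke the argument-projection property for the surgery/connect sum operation. The local model of the connect sum used in \cref{prop:generalizedsurgeryprofile} is supported in a small Weinstein neighborhood of the clean intersection $U_{e,\rho}$, and the surgery neck is the graph of a one-form depending only on the normal directions to $U_{e,\rho}$ via the profile functions $r_e, s_e$. By \cite[Theorem 3.17]{hicks2020tropical}, this construction does not introduce any argument values beyond those already present on the two sections being surgered. Applying this once for each edge $e\in G$ preserves the argument image under each operation, so the iterated connect sum satisfies
\[
\arg\!\bigl(L^{\mathcal D}(\phi^\bullet_w,\phi^\circ_v)\bigr)=\bigcup_{v\in V^\circ}\arg(\sigma^\circ_{v,\rho})\;\cup\;\bigcup_{w\in V^\bullet}\arg(\sigma^\bullet_{w,\rho})=\bigcup_v\Delta^\circ_v\cup\bigcup_w\Delta^\bullet_w,
\]
which is exactly the claimed image $\{\Delta^\bullet_w,\Delta^\circ_v\}\subset T^n$.

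The main obstacle, and the only nonformal step, is the verification that the surgery necks at the various $U_{e,\rho}$ introduce no stray argument values. This is exactly the content of the cited argument-projection property and uses crucially that the intersections $U_{e,\rho}$ have the collared boundary structure of \cref{prop:generalizedsurgeryprofile}, so the surgery is carried out inside a Weinstein neighborhood whose fiber-directions are transverse to $\arg$. Once this input is in hand, the remainder of the argument is just the identification of $\arg(\sigma_\phi)$ with the Newton polytope of $\phi$, which is immediate from the convolution formula for $\tilde\phi_\rho$.
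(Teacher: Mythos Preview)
Your proposal is correct and follows essentially the same approach as the paper: the paper simply remarks that \cite[Theorem 3.17]{hicks2020tropical} ensures the surgery does not change the argument projection, and then states the claim without further proof. You have supplied more detail than the paper does (in particular the Newton polytope computation for $\arg(\sigma^{\opm}_{k,\rho})$), but the key input is the same argument-projection property for the connect sum.
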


\begin{figure}
	\centering
	\input{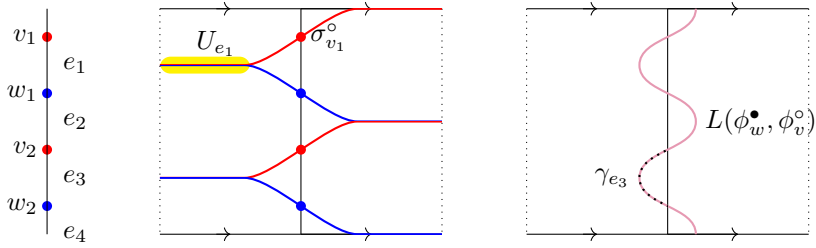}
	\caption{A dimer on $S^1$, the corresponding set of Lagrangian sections on $\CC^*$, and the resulting dimer Lagrangian obtained after surgering the overlap regions. }
	\label{fig:1dimexample}
\end{figure}
Different choices of surgery profile data $\mathcal D$ lead to Lagrangian isotopic (but not Hamiltonian isotopic) Lagrangian submanifolds. 
We now show that there is a preferred Hamiltonian isotopy class of these dimer Lagrangian submanifolds, corresponding to a choice of data making $L^{\mathcal D}(\phi^\bullet_w, \phi^\circ_v)$ exact.
\begin{definition}
	Let $L^{\mathcal D}(\phi^\bullet_w, \phi^\circ_v)$ be a dimer Lagrangian. Let $G$ be the associated graph.
	Give $G$ the structure of a directed graph with edges going from $\bullet$ to $\circ$.
	To each edge $e$, let 
	 \[\gamma_e : [0,1]\to L^{\mathcal D}(\phi^\bullet_w, \phi^\circ_v)\]
	 be a lift of the edge $e$ to the dimer Lagrangian.  
	We define the \emph{weight} of an edge $e$ to be the integral
	\[
		\w_e:= \int_{\gamma_e}\eta,
	\]
	where $\eta=q\cdot dp$ is the tautological 1-form on the cotangent bundle of $T^n$.\footnote{We apologize for the tragic conflict of notation here. We're forced to use $q$ as the coordinate on $Q$, the base of the SYZ fibration, and $p$ for the coordinate on $T^*_0Q/T^*_\ZZ Q$, the fiber of the SYZ fibration. Unfortunately, when we want to treat this as the cotangent bundle of the fiber, our choices make $q$ the fiber coordinate on $T^*T^n$, $p$ the coordinate on the base $T^n$, and $\eta=q\cdot dp$ the canonical 1-form.} 
	\label{def:weight}
\end{definition}
The weight of a cycle $c\subset E(G)$ is 
\[
	\sum_{e\in c} \sgn(e, c)\w_e
\]
which descends to a map on  $\w: H_1(L^{\mathcal D}(\phi^\bullet_w, \phi^\circ_v))\to \RR$ measuring non-exactness of the dimer Lagrangian.
The sign $\sgn(e, c)$ is $+1$ if the $\bullet\to \circ$ orientation of $e$ agrees with $c$, and $-1$ if the orientation of $e$ disagrees with $c$.
\begin{claim}
	Let $\mathcal D_1, \mathcal D_2$ be two choices of surgery profile data. The weight maps $\w_1: H_1(L^{\mathcal D_1} (\phi^\bullet_w, \phi^\circ_v))\to \RR$ and $\w_2: H_1(L^{\mathcal D_2}(\phi^\bullet_w, \phi^\circ_v))\to \RR$ agree if and only if there exists an isotopy between  $L^{\mathcal D_1} (\phi^\bullet_w, \phi^\circ_v))$ and $L^{\mathcal D_2} (\phi^\bullet_w, \phi^\circ_v))$ with total flux zero.
\end{claim}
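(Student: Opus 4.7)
The plan is to interpret the weight homomorphism $\w$ as the cohomology class of the Liouville form $\eta$ restricted to $L^{\mathcal D}$, and then invoke the standard correspondence between flux of a Lagrangian isotopy and the change in this primitive class.

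First I will construct a deformation retraction of $L^{\mathcal D}(\phi^\bullet_w, \phi^\circ_v)$ onto a one-complex homotopy equivalent to the bipartite graph $G$. Each tropical Lagrangian section $\sigma^{\opm}_v$ is contractible, since it is the graph of a closed $1$-form over the contractible base $Q=\RR^n$, so the portion of $L^{\mathcal D}$ coming from $\sigma^{\opm}_v$ retracts onto a point we identify with the vertex $v$. Each surgery neck attached at $U_{e,\rho}$ retracts onto the path $\gamma_e$ chosen in \cref{def:weight}. Under this retraction any class in $H_1(L^{\mathcal D})$ is represented by a combinatorial cycle $\sum_{e\in c}\sgn(e,c)\gamma_e$, and integrating the tautological $1$-form $\eta=q\cdot dp$ over this representative returns precisely $\w(c)$. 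Hence $\w \in H^1(L^{\mathcal D};\RR)=\Hom(H_1(L^{\mathcal D}),\RR)$ coincides with the de Rham class $[\eta|_L]$.

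Next I will invoke the standard flux identification. Because $X$ is naturally identified with $T^*T^n$ on which $\eta$ is globally defined and $\omega=d\eta$, for any smooth Lagrangian isotopy $\psi_t\colon L\to X$ from $L^{\mathcal D_1}$ to $L^{\mathcal D_2}$, Cartan's formula gives
\[
\frac{d}{dt}[\psi_t^*\eta]=[\iota_{X_t}\omega|_{L_t}]\in H^1(L;\RR),
\]
so the total flux integrates to $[\eta|_{L^{\mathcal D_2}}]-[\eta|_{L^{\mathcal D_1}}]=\w_2-\w_1$ under the canonical identification of $H^1(L_t;\RR)$ along the isotopy. Because continuously deforming the profile data $\mathcal D$ produces a continuous family of dimer Lagrangians, such an isotopy always exists, and the conclusion follows: a zero-flux isotopy exists if and only if $\w_1=\w_2$.

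The main technical point, and where most of the care is required, is the first step: verifying that the deformation retraction of $L^{\mathcal D}$ onto its graph skeleton is compatible with the sign conventions $\sgn(e,c)$ (which record whether $e$ traverses the surgery neck from $\bullet$ to $\circ$ in the direction of $c$), and that the path $\gamma_e$ inside the simply-connected surgery neck $U_{e,\rho}$ is well-defined up to homotopies that do not affect $\int_{\gamma_e}\eta$. Once this combinatorial accounting has been carried out, the rest of the proof reduces to the standard primitive/flux correspondence for Lagrangians in an exact symplectic manifold.
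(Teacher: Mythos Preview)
Your proposal is correct and follows essentially the same route as the paper. The paper's proof is a two-line version of exactly what you wrote: the space of surgery data is connected (giving the isotopy and the identification of $H_1$), and in an exact symplectic manifold the flux of any Lagrangian isotopy equals the change in the class of the Liouville primitive restricted to the Lagrangian, i.e.\ $\w_2-\w_1$. Your extra work, namely the deformation retraction onto the graph $G$ to verify that the weight map literally equals $[\eta|_L]$, is implicit in the paper, which simply asserts just before the claim that $\w$ ``descends to a map on $H_1$ measuring non-exactness'' without spelling out the retraction. So your argument is a fleshed-out version of the same proof rather than a genuinely different approach.
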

\begin{proof}
	Since the space of surgery data is connected, $L^{\mathcal D_1} (\phi^\bullet_w, \phi^\circ_v))$ and $L^{\mathcal D_2} (\phi^\bullet_w, \phi^\circ_v))$ are Lagrangian isotopic, identifying the homology groups  $H_1(L^{\mathcal D_i} (\phi^\bullet_w, \phi^\circ_v))$.
	Since we are in the symplectically exact setting, the flux of a Lagrangian isotopy is equal to the change in the evaluation of the flux primitive on homology, i.e. $\w_2(c)-\w_1(c)$.
\end{proof}

The weight of an edge $e$ encodes the flux swept  by the choices of surgery neck and smoothing at region $U_{e,\rho}$ in the construction of $L^{\mathcal D}(\phi^\bullet_w, \phi^\circ_v)$.
\begin{lemma}
	For any assignment of $\{\mathfrak v_e\}_{e\in E} \in \RR_{\geq 0}$ of weights, there exists a choice of smoothing kernel $\rho$ and surgery profiles $r_e, s_e$ so that $\w_e=\mathfrak v_e$ for all $e$.
	\label{lemma:weightchoices}
\end{lemma}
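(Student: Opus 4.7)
The plan is to decompose each weight $\w_e$ into a contribution from the two tropical section pieces of $L^{\mathcal D}(\phi^\bullet_w, \phi^\circ_v)$ and a contribution from the Lagrangian surgery neck at $U_{e, \rho}$, and then show each piece can be tuned independently.

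First, I would orient $\gamma_e$ as a concatenation $\gamma_e^\bullet \ast \gamma_e^\# \ast \gamma_e^\circ$ where $\gamma_e^\bullet \subset \sigma^\bullet_{w, \rho}$, the middle segment $\gamma_e^\#$ crosses the surgery neck at $U_{e, \rho}$, and $\gamma_e^\circ \subset \sigma^\circ_{v, \rho}$. Since each tropical section $\sigma^\opm_{k, \rho}$ is the graph of $d\tilde\phi^\opm_{k, \rho}$, the restriction of the tautological $1$-form $\eta$ to such a section equals the exact form $d\tilde\phi^\opm_{k, \rho}$, so the primitive contributions $\int_{\gamma_e^\opm}\eta$ depend only on $\tilde\phi^\opm_{k, \rho}$ evaluated at $\partial U_{e,\rho}$ and at the chosen base points of $\gamma_e$. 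By the normalization $\tilde\phi^\opm_{\rho}(0) = 0$ and the level-set identity \eqref{eq:levelatneck}, those boundary values scale linearly with the smoothing radius $R$, and in particular can be made uniformly small by shrinking $\rho$.

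For the neck contribution $\int_{\gamma_e^\#}\eta$, I would appeal to \cref{prop:generalizedsurgeryprofile}: as the profile functions $(r_e, s_e)$ vary, the symplectic area swept across the surgery neck varies continuously and surjects onto $\RR_{\geq 0}$. Because the surgery regions at distinct edges are disjoint inside $L^{\mathcal D}(\phi^\bullet_w, \phi^\circ_v)$, the profile data $(r_e, s_e)$ at different edges may be tuned independently.

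To finish, I would first fix a global smoothing kernel $\rho$ whose radius $R$ is small enough that at every edge $e$ the combined primitive contribution is strictly less than $\mathfrak v_e$, then at each edge choose profile data $(r_e, s_e)$ so that the neck contribution realizes the residual $\mathfrak v_e - \int_{\gamma_e^\bullet \ast \gamma_e^\circ}\eta \in \RR_{\geq 0}$. The principal obstacle is making the flux-profile correspondence precise, i.e.\ verifying that every prescribed nonnegative neck area is indeed realized by some valid pair $(r_e, s_e)$; this is precisely the content of \cref{prop:generalizedsurgeryprofile} (originally \cite[Proposition 3.1]{hicks2020tropical}), and I would invoke it as a black box rather than redo the cobordism construction.
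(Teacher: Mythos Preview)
Your decomposition of $\gamma_e$ and the observation that $\eta$ restricts to an exact form on each section are correct, but the direction of your tuning is backwards, and the claim that the neck contribution surjects onto $\RR_{\geq 0}$ is false. The surgery of \cref{prop:generalizedsurgeryprofile} is a local modification in a Weinstein neighborhood of the overlap $U_{e,\rho}$; as the paper records after \cref{eq:levelatneck}, the neck width $\text{NW}(r_e,s_e)$ is bounded above by the value of the primitive $f=-\tilde\phi^\bullet_{w,\rho}+\tilde\phi^\circ_{v,\rho}$ on $\partial U_{e,\rho}$, and that value is itself proportional to $R$. So for fixed small $R$ the neck contribution ranges only over a bounded interval, not all of $\RR_{\geq 0}$, and your scheme cannot reach any $\mathfrak v_e$ larger than that bound.

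The paper's proof runs in the opposite direction: one first takes $R$ \emph{large} so that the pre-surgery weight $\int_{\hat\gamma_e}\eta$ exceeds every target $\mathfrak v_e$, and then chooses the profile $(r_e,s_e)$ so that the neck width \emph{subtracts} the correct amount, $\text{NW}(r_e,s_e)=\int_{\hat\gamma_e}\eta-\mathfrak v_e$. The point is that the surgery removes flux rather than adds it, and the amount it can remove is capped by the primitive, which grows with $R$. Your argument would be repaired by reversing the roles: enlarge $\rho$ until the section contribution dominates each $\mathfrak v_e$, then tune the neck down.
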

\begin{figure}
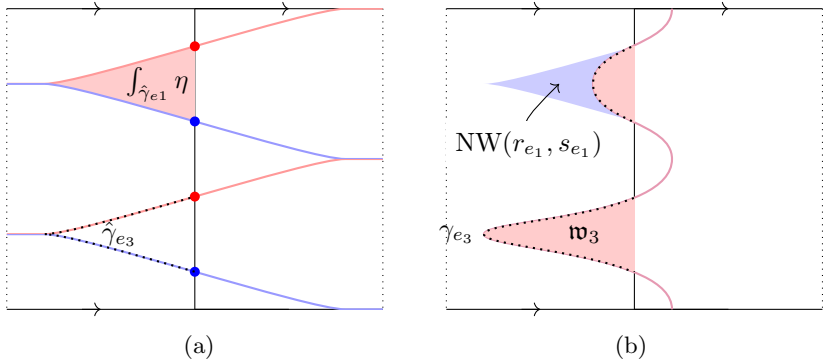

	\centering
	\begin{subfigure}{.45\linewidth}
	\centering
	\input{figures/modifiednecks1.tikz}
	\caption{}
	\label{fig:beforesurgery}
	\end{subfigure}
	\begin{subfigure}{.45\linewidth}
	\centering
	\input{figures/modifiednecks2.tikz}
	\caption{}
	\end{subfigure}
	\caption{Modifying the parameters $\mathcal D$ change the Lagrangian isotopy class of the Lagrangian (in comparison to \cref{fig:1dimexample}). On the left, increasing the smoothing parameter $\rho$ increases $\w_e$ before applying surgery. On the right, choosing large surgery necks decreases the weights $\w_e$.}
	\label{fig:modifiednecks}
\end{figure}
\begin{proof}
	We give a sketch  before performing the computation in detail.
	The vertices of the dimer $v^\opm$ can be rearranged in such a way that $\arg(d\tilde \phi^\opm_v(0))=v^\opm$. 
	With this choice of endpoints, the valuation projection of a path $\val(\gamma_e)$ starts at the origin of $Q$, travels out to the surgery neck $U_{e,\rho}$, and then comes back to the origin.
	The weight $\w_e$ is roughly proportional to the length of this path, which is controlled by how close the surgery neck comes to the origin.
	This can be increased for all surgery necks by taking a larger radius for $\rho$, and decreased at a specific surgery neck by picking surgery profile data $(r_e, s_e)$ with larger neck radius. See \cref{fig:modifiednecks}, in comparison to \cref{fig:1dimexample}.

	 Let $R$ be the radius of the support of the smoothing kernel $\rho$. 
	 Consider sections $\sigma^\bullet_{w, \rho}, \sigma^\circ_{v, \rho}$ which overlap over $U_{e,\rho}$. 
	There is a lifting of the edge $e$ to a path  $\hat \gamma_e$, which has the property that $\hat\gamma_e\subset \sigma^\bullet_w\cup \sigma^\circ_v$ and $\arg(\gamma_e)=e$ (drawn as a dotted path in \cref{fig:beforesurgery}). 
	We show that $\int_{\hat \gamma_e}\eta$ is dependent on the radius of the smoothing kernel.
	Parameterize $\hat \gamma_e: [0, 1]\to (\CC^*)^n$   so that  $\hat \gamma_e|_{[0, 1/2]}\subset \sigma^\bullet_w$, and $\hat \gamma_e|_{[ 1/2,1]}\subset \sigma^\circ_v$.
	We note that $q_e(1/2)\in \partial U_{e,\rho}$, whose properties are described by \cref{eq:levelatneck}.

	The section $\sigma^\bullet_{w,\rho}= d\tilde \phi^\bullet_{w,\rho}$ is parameterized in $(q,p)$ coordinates by  $( q^i,  \partial_{q_i} \tilde \phi^\bullet_{w,\rho} )$.
	Write the path $\hat \gamma_e(t)=( q_e^i(t), p_e^i(t))_{i=1}^n$ so that 
	\[p_e^i(t)=\left\{\begin{array}{cc}\left.\partial_{q_i} \tilde \phi^\bullet_{w,\rho}\right|_{q_e(t)} & t<\frac{1}{2}\\
		\left.\partial_{q_i} \tilde \phi^\circ_{v,\rho}\right|_{q_e(t)}  & t>\frac{1}{2}
	\end{array}\right.\]
	is a parameterization of the edge $e$. 
	The integral of $\eta$ along this path is:
	\begin{align*}
		\int_{\hat \gamma_e}\eta = \sum_{i=1}^n \int_{0}^1& q_e^i \cdot \frac{d}{dt}(p_e^i) = \sum_{i=1}^n (q_e^i \cdot p_e^i)|_{t=0}^{t=1} - \int \frac{dq_e^i}{dt}\cdot  p_e^i dt\\
		\intertext{as $q_e^i=0$ for $t=0, 1$}
		=& -\int_{0}^{1/2} d\tilde \phi^\bullet_{w,\rho}\left(\frac{dq_e}{dt}\right) dt-\int_{1/2}^{1} d\tilde \phi^\circ_{v,\rho} \left(\frac{dq_e}{dt}\right) dt\\
		=& \tilde \phi^\bullet_{w, \rho}(q_e(0))-\tilde \phi^\bullet_{w, \rho}(q_e(1/2))+\tilde \phi^\circ_{v,\rho}(q_e(1/2))-\tilde \phi^\circ_{v,\rho}(q_e(1))\\
		\intertext{Without loss of generality, we can assume that  $\rho$ is an approximation of the identity. This means that there exists a small constant $\epsilon_\rho>0$ so that $|\tilde \phi^\opm_{k, \rho}(q_e(0))-\phi^\opm_k(q_e(0))|<\epsilon_\rho$. Since $q_e(0)=0$, $q_e(1)=0$and $\phi^\opm_k(0)=0$}
		>&-\tilde \phi^\bullet_{w, \rho}(q_e(1/2))+\tilde \phi^\circ_{v,\rho}(q_e(1/2))-  2\epsilon_\rho
	\end{align*}
	As $\tilde \phi^\circ_{v,\rho}$ (resp. $\tilde \phi^\bullet_{w,\rho}$) is convex (resp. concave,) and the value of $\tilde \phi^\opm_{k,\rho}$ on $\partial U_{e, \rho}$ is positive (resp. negative) and governed by \cref{eq:levelatneck}.
	In summary, we can make $\int_{\hat {\gamma}_e}\eta$ as large as desired by increasing the radius $R$.

	The discussion following \cite[Figure 2]{hicks2020tropical} assigns a \emph{neck width} $\text{NW}(r_e, s_e)$ to each choice of surgery profile $(r_e, s_e)$.
	This quantity measures the flux swept by the surgery in the sense that:
	\[\w_e= \int_{\gamma_e}\eta=\int_{\hat {\gamma}_e}\eta  -\text{NW}(r_e, s_e).\] 
	The neck width is bounded by the value of the primitive $f$ from \cref{prop:generalizedsurgeryprofile} over the surgery region.
	In our case, the primitive $f$ is $-\tilde \phi^\bullet_{w, \rho}+ \tilde \phi^\circ_{v,\rho}$, so we can choose profile functions making $\text{NW}(r_e, s_e)$ to be as close to $-\phi^\bullet_{w, \rho}(q_e(1/2))+\phi^\circ_{v,\rho}(q_e(1/2))$ as desired. 

	To conclude the proof: for a fixed set of weights $\{\mathfrak v_e\}_{e\in E}$, choose $R$ large enough so that $\int_{\hat {\gamma}_e}\eta> \mathfrak v_e$ for all $e$, and then pick surgery profiles so that 
	\[\text{NW}(r_e, s_e)=\left(\int_{\hat {\gamma}_e}\eta\right)  -\mathfrak v_e.\]
\end{proof}
\begin{corollary}
	There exists a choice of surgery data $\mathcal D$ so that $\w:  H_1(L(\phi^\bullet_w, \phi^\circ_v))\to \RR$ is constantly zero.
\end{corollary}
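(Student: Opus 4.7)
The plan is to deduce this directly from Lemma~\ref{lemma:weightchoices}. I would apply that lemma to the assignment $\mathfrak v_e = 0$ for every edge $e \in E(G)$. Since $0 \in \RR_{\geq 0}$, the lemma produces a choice of smoothing kernel $\rho$ and surgery profiles $(r_e, s_e)$ for which every edge weight $\w_e$ vanishes exactly.

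With this data in hand, the weight map on homology factors through the signed sum over edges: for any cycle $c \subset E(G)$ we have
\[
\w(c) = \sum_{e \in c} \sgn(e,c)\, \w_e = 0.
\]
Because $H_1(L^{\mathcal D}(\phi^\bullet_w, \phi^\circ_v))$ is generated by loops of this combinatorial form — every 1-cycle on the dimer Lagrangian is homologous, via the argument projection to the bipartite graph $G$, to a signed sum of edges — the linear map $\w$ vanishes identically on $H_1$.

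The only point I would verify carefully is that Lemma~\ref{lemma:weightchoices} really does produce exact equality $\w_e = 0$, not just $\w_e$ arbitrarily small. Inspecting the proof of that lemma, exact vanishing is achieved by first choosing the smoothing radius $R$ large enough that the integral $\int_{\hat\gamma_e}\eta$ along the unsurgered path is strictly positive, and then selecting surgery profile functions $(r_e, s_e)$ whose neck width $\text{NW}(r_e, s_e)$ exactly equals this integral, since the neck width can be chosen freely up to the value of the primitive $-\tilde\phi^\bullet_{w,\rho}+\tilde\phi^\circ_{v,\rho}$ over the surgery region. Thus the construction is not merely asymptotic and genuinely produces zero weights on every edge, from which the corollary follows.
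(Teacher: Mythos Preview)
Your argument is correct, but it differs from the paper's in an interesting way. The paper does \emph{not} set the edge weights to zero. Instead it invokes \cref{lemma:weightchoices} to set every $\w_e$ equal to a single strictly positive constant $A>0$, and then uses the bipartiteness of $G$: any cycle $c$ traverses the same number of edges in the $\bullet\to\circ$ direction as in the $\circ\to\bullet$ direction, so the signed sum $\sum_{e\in c}\sgn(e,c)\,\w_e$ cancels in pairs $A-A$.

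Your route is more direct and avoids invoking bipartiteness at all, at the cost of leaning on the boundary case $\mathfrak v_e=0$ of the lemma --- a case you rightly flag as requiring care, since the proof of \cref{lemma:weightchoices} phrases the neck-width control as ``as close to $\dots$ as desired'' rather than as an exact equality at the upper endpoint. The paper's route sidesteps this delicacy entirely by staying in the interior $A>0$, and in exchange makes essential use of the dimer's bipartite structure. Both arguments are valid; the paper's is perhaps the more robust, yours the more economical.
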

\begin{proof}
	By \cref{lemma:weightchoices}, we can choose the weight at every edge to be a fixed constant $A>0$.
	Since the graph $G$ is bipartite, every cycle $c$ transverses an equal number of edges in the $\bullet\to\circ$ direction as $\circ\to \bullet$ direction, so that $\w_c=\sum_{w^\bullet v^\circ\in c} \w_{w^\bullet v^\circ}+\sum_{v^\circ w^\bullet\in c} \w_{v^\circ w^\bullet}=0$.
\end{proof}
\begin{corollary}
	There exists a choice of surgery data ${\mathcal D}$ making $L^{\mathcal D}(\phi^\bullet_w, \phi^\circ_v)$ exact. 
	\label{cor:exact}
\end{corollary}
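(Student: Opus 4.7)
My approach is to promote the vanishing of the weight map (from the preceding corollary) to global exactness of the dimer Lagrangian. The crucial ingredient is that each tropical section $\sigma^\opm_{k,\rho}$ is already exact: the Liouville one-form $\eta$ pulls back to $d\tilde\phi^\opm_{k,\rho}$. So any obstruction to $L^{\mathcal D}(\phi^\bullet_w,\phi^\circ_v)$ being exact must be detected by cycles threading through the surgery necks $U_{e,\rho}$, and these are precisely the cycles measured by $\w$.

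To carry this out, I would first identify $H_1(L^{\mathcal D})$ with the cycle space $H_1(G)$ of the bipartite graph. Each section $\sigma^\opm_{k,\rho}$ is contractible (diffeomorphic to $\RR^n$), and each surgery region $U_{e,\rho}$ is contractible as a region in $\RR^n$ on which the relevant smoothed tropical functions agree with a single affine function. Viewing $L^{\mathcal D}$ as the union of the section pieces joined along the surgery necks and applying Mayer--Vietoris then produces a natural isomorphism $H_1(L^{\mathcal D}) \cong H_1(G)$ whose generators are the lifted edge paths $\gamma_e$ of \cref{def:weight}.

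Second, I would identify the de Rham class $[\eta|_{L^{\mathcal D}}] \in H^1(L^{\mathcal D};\RR)$ with the weight homomorphism $\w$. Along any cycle of $L^{\mathcal D}$ lying inside a single section, the integral of $\eta$ telescopes against the primitive $\tilde\phi^\opm_{k,\rho}$ and vanishes, while the surgery-neck contributions are exactly the quantities $\w_e$ defined in \cref{def:weight}. Thus for any cycle $c$ in $G$, $\int_c \eta = \sum_{e\in c}\sgn(e,c)\w_e$. Applying the preceding corollary produces surgery data $\mathcal D$ with $\w \equiv 0$, whence $[\eta|_{L^{\mathcal D}}]$ vanishes and $\eta|_{L^{\mathcal D}}$ admits a global primitive $f:L^{\mathcal D}\to \RR$.

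The principal technical obstacle is the identification $H_1(L^{\mathcal D}) \cong H_1(G)$ in arbitrary dimension: one must verify that the surgery operation of \cref{prop:generalizedsurgeryprofile}, applied iteratively along the regions $U_{e,\rho}$, contributes no extra one-cycles beyond those recorded by edges of $G$. The no-self-intersection hypothesis on the dual dimer ensures that distinct surgery necks occupy disjoint neighborhoods, and each neck is locally modelled on a standard collared-surgery picture whose only new topological contribution is the one-cycle passing through it. With this in place, the remaining steps follow directly from the previous corollary.
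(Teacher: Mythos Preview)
Your proposal is correct and follows the paper's approach: the paper asserts just after \cref{def:weight} that $\w$ descends to a map on $H_1(L^{\mathcal D}(\phi^\bullet_w,\phi^\circ_v))$ ``measuring non-exactness of the dimer Lagrangian,'' and then treats \cref{cor:exact} as an immediate consequence of the preceding corollary without further argument. Your Mayer--Vietoris/nerve identification $H_1(L^{\mathcal D})\cong H_1(G)$ is precisely the justification the paper leaves implicit for why $\w$ captures the entire class $[\eta|_{L^{\mathcal D}}]$.
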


When we write  $L(\phi^\bullet_w, \phi^\circ_v)$, we will always mean an exact dimer Lagrangian.
By choosing surgery data with weights going to zero, we obtain Lagrangians whose valuations approximate $V$, the tropical hypersurface associated to the dimer.
Recall that $V^k$ is the codimension $k$ strata of $V$. 
Let $N^*V^0/N^*_\ZZ V^0\subset (\CC^*)^n$ the collection of conormal Lagrangian tori to the top dimensional strata.
\begin{claim}
	 There exists a Lagrangian $L$, Hamiltonian isotopic to $L(\phi^\bullet_w, \phi^\circ_v)$, which fibers over the tropical hypersurface $V$ in the complement of a neighborhood of $V^1$, that is,
	\[L|_{X\setminus \val^{-1}(B_\epsilon(V^1))}= (N^*V^0/N^*_\ZZ V^0)|_{X\setminus \val^{-1}(B_\epsilon(V^1))}.\]
	\label{claim:codimension0matches}
\end{claim}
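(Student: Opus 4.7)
The plan is to construct $L$ as the Hamiltonian image of $L(\phi^\bullet_w, \phi^\circ_v)$ under an isotopy supported in a tubular neighborhood of $V^0 \setminus B_\epsilon(V^1)$, arguing locally over each smooth face. Fix an open face $\underbar U^\beta \subset V$ corresponding to a dimer edge $e = wv$ with shared corner $\beta \in T^n$, and choose affine coordinates on $Q$ adapting $\underbar U^\beta$ to the hyperplane $\{q_n = 0\}$, so that $T^n = T^{n-1} \times T^1_s$ with $\beta \in T^{n-1}$ and $T^1_s$ spanning the conormal direction. The two sections $\sigma^\bullet_{w,\rho}$ and $\sigma^\circ_{v,\rho}$ both take the constant value $(\beta, 0)$ over the interior of $U_{e,\rho}$, and by the dual dimer condition that the edges of $\Delta^\bullet_w$ and $\Delta^\circ_v$ at $\beta$ point in opposite directions, the sections diverge into opposite sides of $T^1_s$ as $q$ exits $U_{e,\rho}$.

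Applying the generalized surgery of \cref{prop:generalizedsurgeryprofile} with collared boundary along $\partial U_{e,\rho}$, the surgered Lagrangian in a tubular neighborhood of $\underbar U^\beta$ is topologically a Lagrangian $T^1_s$-bundle over $\underbar U^\beta$: the two divergent sheets are joined by the surgery neck into a single closed loop in the fiber above each interior point of $U_{e,\rho}$, and that loop represents the generator of $H_1(T^1_s)$. The conormal torus bundle $N^*\underbar U^\beta / N^*_\ZZ \underbar U^\beta$, fiberwise shifted to argument $\beta$ in $T^{n-1}$, is likewise a Lagrangian $T^1_s$-bundle representing the same homology class, and is exact by the canonical vanishing of $p\,dq$ along conormal directions. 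Since $L(\phi^\bullet_w, \phi^\circ_v)$ is exact by \cref{cor:exact}, an exact Lagrangian isotopy between the two local models exists and can be realized as a Hamiltonian flow that reshapes the surgery neck into the full conormal circle at each point of the face.

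For globalization, choose cutoff functions localizing the above Hamiltonian to a neighborhood of $\underbar U^\beta \setminus B_\epsilon(\partial \underbar U^\beta)$; these supports are disjoint in $X \setminus \val^{-1}(B_\epsilon(V^1))$, so the local isotopies glue to a single Hamiltonian isotopy whose endpoint is the desired $L$. The main technical hurdle is the second step: verifying that the generalized surgery genuinely produces the $T^1_s$-bundle structure, and in particular that the surgery neck wraps once around the conormal circle rather than producing a contractible loop. This wrapping follows from the orientation-compatibility in the definition of a dual dimer, which forces the two sections to exit $U_{e,\rho}$ on opposite sides of $T^1_s$ and hence forces the surgery neck produced by the profile functions $(r_e, s_e)$ to traverse a full generator of $H_1(T^1_s)$.
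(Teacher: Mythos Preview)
Your overall logical skeleton matches the paper's: both arguments reduce to ``exact Lagrangians which are Lagrangian isotopic (on the region in question) are Hamiltonian isotopic there, then cut off the Hamiltonian to extend globally.'' The paper, however, dispatches the first step in one line: it simply observes that $L(\phi^\bullet_w,\phi^\circ_v)$ and $N^*V^0/N^*_\ZZ V^0$ are $C^1$-close on $X\setminus\val^{-1}(B_\epsilon(V^1))$ (a consequence of the construction with small smoothing and neck parameters), so one is a graph of a closed $1$-form over the other in a Weinstein neighborhood; exactness of both makes that $1$-form exact, hence a Hamiltonian isotopy. No explicit circle-bundle model is built.

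Your more explicit route has a genuine mix-up in the combinatorics. A dimer edge $e=wv$ with shared corner $\beta$ indexes a \emph{chamber} of $Q\setminus V$ (the region of linearity $U_{e,\rho}$), not a top face of $V$; top faces of $V$ correspond to \emph{edges} of the dimer polytopes (the $1$-dimensional $\underbar U^\beta$), and each such face separates two different chambers $U_{e_\pm,\rho}$. Consequently your sentence ``the two divergent sheets are joined by the surgery neck into a single closed loop in the fiber above each interior point of $U_{e,\rho}$'' is false as written: by \cref{prop:generalizedsurgeryprofile} the surgered Lagrangian lives near the symmetric difference, so above the interior of $U_{e,\rho}$ there is \emph{nothing}. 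The closed loop you want appears above points of the face $F\subset V^0$, and it is assembled from pieces of \emph{several} sections (at minimum $\sigma^\circ_v$ and two distinct $\sigma^\bullet_{w_\pm}$, or vice versa) together with necks coming from the two adjacent chambers. Verifying directly that this is a once-wrapped $T^1_s$ over $F$ therefore requires more bookkeeping than your sketch provides; it is not just a single pair of sheets diverging in opposite $T^1_s$-directions. This is exactly the complication the paper's $C^1$-closeness argument is designed to sidestep.
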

\begin{proof}
	The Lagrangian submanifolds $L(\phi^\bullet_w, \phi^\circ_v)$ and  $N^*V^0/N^*_\ZZ V^0$ are isotopic in the complement of $\val^{-1}(B_\epsilon(V^1))$; the flux of this isotopy is zero as they are both exact. 
	Since $L(\phi^\bullet_w, \phi^\circ_v)$ and  $N^*V^0/N^*_\ZZ V^0$ are $C^1$ close, there is a Hamiltonian isotopy between  $L(\phi^\bullet_w, \phi^\circ_v)|_{X\setminus \val^{-1}(B_\epsilon(V^1))}$ and $(N^*V^0/N^*_\ZZ V^0)|_{X\setminus \val^{-1}(B_\epsilon(V^1))}$.
	By interpolating the Hamiltonian over the remainder of $L(\phi^\bullet_w, \phi^\circ_v)$, we obtain the desired Lagrangian submanifold $L$. 
\end{proof}

\begin{example}
	Consider the dual dimer model drawn in \cref{fig:mutationhexagons}. 
	The six triangles drawn are associated to the following six tropical functions.
	\begin{align*}
		\phi_1^\circ =   & (x_1^{6/6}\odot x_2^{6/6})\oplus (x_1^{5/6}\odot x_2^{4/6})\oplus (x_1^{4/6}\odot x_2^{5/6})\\
		\phi_2^\circ =   & (x_1^{2/6}\odot x_2^{4/6})\oplus (x_1^{0/6}\odot x_2^{3/6})\oplus (x_1^{1/6}\odot x_2^{2/6})\\
		\phi_3^\circ =   & (x_1^{4/6}\odot x_2^{2/6})\oplus (x_1^{2/6}\odot x_2^{1/6})\oplus (x_1^{3/6}\odot x_2^{0/6})\\
		\phi_1^\bullet = &-(x_1^{0/6}\odot x_2^{0/6})\oplus (x_1^{1/6}\odot x_2^{2/6})\oplus (x_1^{2/6}\odot x_2^{1/6})\\
		\phi_2^\bullet = &-(x_1^{2/6}\odot x_2^{4/6})\oplus (x_1^{3/6}\odot x_2^{6/6})\oplus (x_1^{4/6}\odot x_2^{5/6})\\
		\phi_3^\bullet = &-(x_1^{4/6}\odot x_2^{2/6})\oplus (x_1^{5/6}\odot x_2^{4/6})\oplus (x_1^{6/6}\odot x_2^{3/6})\\
	\end{align*}
	All six functions give the same nonlinearity stratification to $Q$,
	\[
		V (\phi_i^\bullet)=V(\phi_i^\circ)= V( x_1\oplus x_2\oplus (x_1\odot x_2)^{-1}).
	\]
	There are nine Lagrangian surgeries that we need to perform in order to build $L(\phi^\bullet_w, \phi^\circ_v)$.
	The valuation projection of the Lagrangian submanifold approximates the tropical curve with three legs. 
	\label{exam:ellipticchecker}
\end{example}
These dimer Lagrangians serve as a generalization of tropical Lagrangians constructed in \cite{hicks2020tropical}, where
\[
	L(\phi)= L(\phi_v, 0).
\]
\begin{example}
	One can also assemble lifts of more complicated tropical curves by gluing several dimer Lagrangians together. 
	For example, the genus 1 tropical curve drawn in \cref{fig:smoothtorus} can be built from taking three vertices.
	At each vertex we place a dimer whose cycles are normal to the edges of the vertices. 
\end{example}
\begin{figure}
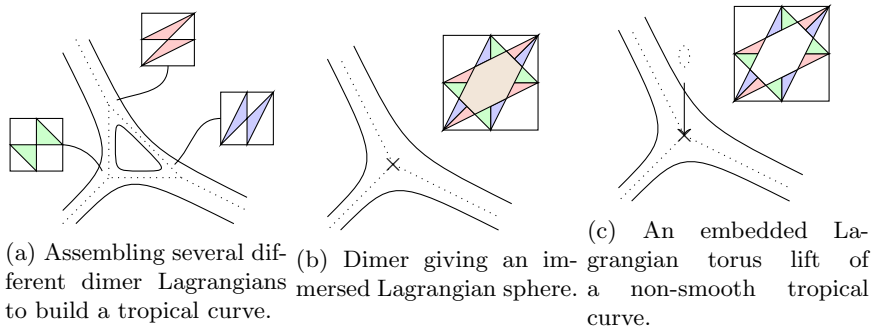

	\centering
	\begin{subfigure}{.3 \linewidth}
		\centering
		\input{figures/matessismoothing.tikz}
		\caption{Assembling several different dimer Lagrangians to build a tropical curve.}
		\label{fig:smoothtorus}
	\end{subfigure}
	\begin{subfigure}{.3 \linewidth}
		\centering
		\input{figures/matessismoothing2.tikz}
		\caption{Dimer giving an immersed Lagrangian sphere.}
		\label{fig:mutatedhexagon}
	\end{subfigure}
	\begin{subfigure}{.3 \linewidth}
		\centering
		\input{figures/matessismoothing3.tikz}
		\caption{An embedded Lagrangian torus lift of a non-smooth tropical curve.}
		\label{fig:mutationhexagons}
	\end{subfigure}
	\caption
	{
		Three related Lagrangians 
	}
	\label{fig:toricdimermutation}
\end{figure}
\begin{example}
	It is not necessary for the dimer model to consist of disjoint faces.
	In \cref{fig:mutatedhexagon} we see a configuration with two triangular faces which overlap at a hexagon. 
	The Lagrangian associated to this dimer is immersed, but has the same legs as the example in \cref{fig:mutationhexagons}. 
	\label{exam:immersed}
\end{example}
The relation between these three examples is that of Lagrangian mutation, which we will formalize to other examples in \cref{subsec:mutations}.

\subsection{Floer Theoretic Support from Dimer Model }
\label{subsec:dimercomputation}
We now restrict ourselves to the setting  $(\CC^*)^2= T^*F_0$
and describe a combinatorial approximation of 
\[
	\CF(L(\phi^\bullet_w, \phi^\circ_v), F_0),
\]
the Floer theory of our tropical Lagrangian against fibers of the SYZ fibration.
\begin{definition}
	Let $\{\Delta^\circ_v\},\{ \Delta^\bullet_w\}$ be a dual dimer configuration with affine bipartite graph $G$.
	Let $\del$ be a $\CC^*$ connection $T^n$, assigning to each path $e:v\to w$ an element $\del_e(1)\in \CC^*$, the image of 1 under parallel transport.
	The Kasteleyn complex with weighting $\del$ is the 2-term chain complex $C^\bullet(G, \del)$ which as a graded vector space is  
	\[ 
		\left(\bigoplus_{v_i^\circ\in V^\circ}\CC\langle v_i^\circ\rangle\right) \oplus \left(\bigoplus_{w_j^\bullet\in V^\bullet}\CC\langle w_j^\bullet\rangle[1]\right)
	\]
	 The differential $d_\nabla$ is determined by the  structure coefficients
	\[
		\langle d_\nabla^\Sigma(v^\circ), w^\bullet\rangle =\sum_{\substack{e\in E(G)\\ e=v^\circ w^\bullet}} \del_e(1).
	\]
	The \emph{support} of $\{\Delta^\circ_v\},\{ \Delta^\bullet_w\}$ is the set of local systems
	\[
		\Supp(\{\Delta^\circ_v\},\{ \Delta^\bullet_w\}):= \{\nabla\;|\; H^1(G, \del)\neq 0\}.
	\]
\end{definition}
In dimension 2, $G$ is exactly a dimer, and the support is the zero locus of the polynomial
\[
	Z^G(\nabla):= \det(d_\nabla). 
\]

The terminology comes from literature on dimers \cite{kenyon2006dimers}. 
By letting the local system $\del$ determine a weight for each edge of the dimer, the terms of the determinant corresponds to the product of weights of a maximal disjoint set of edges (called its Boltzmann weight). 
A maximal disjoint set of edges in a dimer is called a \emph{dimer configuration}, and the sum of Boltzmann weights over all configurations gives the partition function $Z^G(\nabla)$ of the dimer.

We now explain the relation between the  Kasteleyn complex $C^\bullet(G, \del)$ and the Lagrangian intersection Floer complex $CF(L(\phi^\bullet_w, \phi^\circ_v), (F_0, \del))$.
These complexes are isomorphic as vector spaces, as the intersection points of $F_0$ and $L(\phi^\bullet_w, \phi^\circ_v)$ are in bijection with the vertices of the dimer.
The Lagrangian $L(\phi^\bullet_w, \phi^\circ_v)$ is built from taking a surgery of the pieces $\sigma_{v^\opm}$. 
An expectation from \cite{fukaya2010Lagrangian} is that holomorphic strips contributing to the differential $\mu^1: CF(L_0\#_p L_1, L_2)$ are in correspondence with  holomorphic triangles contributing to $\mu^2: CF(L_0, L_1)\tensor CF(L_1, L_2)$.
In our construction of $L(\phi^\bullet_w, \phi^\circ_v)$ we smoothed regions larger than intersection points between the sections $\sigma_{v}^\opm$, however we expect a similar result to hold. 
These intersections are in correspondence with the edges of the dimer $G$, and so we predict that the differential on $CF(L(\phi^\bullet_w, \phi^\circ_v), (F_0, \del))$ should be given by weighted count of edges in the dimer.
The local system $\del$ on $F_0$ determines the weight of the holomorphic strips corresponding to each edge.
\begin{conjecture}
	The isomorphism of vector spaces 
	\[
		\CF(L(\phi^\bullet_w, \phi^\circ_v), F_0)\to C^\bullet(G, \del)
	\]
	is a chain homomorphism.
\end{conjecture}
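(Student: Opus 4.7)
The plan is to establish the isomorphism through a strip-triangle correspondence in the spirit of Fukaya-Oh-Ohta-Ono, reducing the count of Floer strips with boundary on the surgered Lagrangian $L(\phi^\bullet_w, \phi^\circ_v)$ to a count of holomorphic triangles with boundary on the unsurgered sections $\sigma^\opm_{k,\rho}$ together with the SYZ fiber $F_0$. First I would identify the generators of $\CF(L(\phi^\bullet_w, \phi^\circ_v), F_0)$ at the level of vector spaces. By \cref{claim:argument}, each section $\sigma^\opm_{k,\rho}$ meets $F_0$ transversely in a single point $p^\opm_k$ whose argument coincides with the vertex of the dimer polytope $\Delta^\opm_k$; if $F_0$ is chosen to lie outside the valuation image of the surgery necks, the surgery operation preserves these intersections and introduces no new ones, yielding the claimed bijection with the vertices of $G$.

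Next I would analyze the holomorphic strips contributing to $\mu^1$. After choosing an adapted almost complex structure on $(\CC^*)^n$ and using \cref{claim:codimension0matches} to model $L$ as a union of conormal Lagrangians away from a neighborhood of $V^1$, glued across surgery necks, the relevant Floer strips concentrate near these necks. By a neck-stretching argument that degenerates the surgery profile $(r_e, s_e)$ toward a point-surgery, index-one strips should converge to broken configurations consisting of a single holomorphic triangle with boundary on $\sigma^\circ_v$, $\sigma^\bullet_w$, $F_0$ and corners at $p^\circ_v$, $p^\bullet_w$, and an intersection point of the two sections lying over some $U_{e,\rho}$. Each edge $e = v^\circ w^\bullet$ of $G$ lifts to exactly one such triangle in the universal cover of $T^n$, whose boundary arc on $F_0$ represents $e$.

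The contribution of this triangle, weighted by the parallel transport of $\del$ along its boundary arc on $F_0$, is precisely $\del_e(1)$, so summing over edges from $v^\circ$ to $w^\bullet$ recovers the Kasteleyn structure constant
\[
\langle d_\del(v^\circ), w^\bullet\rangle = \sum_{e = v^\circ w^\bullet} \del_e(1).
\]
Exactness of $L$ (\cref{cor:exact}) and of $F_0$ ensures that the symplectic areas of all such triangles can be absorbed into the local system, so the coefficients land in $\CC^*$ rather than in a Novikov ring. The main obstacle is the rigorous verification that no other strips contribute, i.e.\ that every index-one strip has image concentrated near a single surgery neck. In dimension two this should be tractable using the argument projection $\arg : (\CC^*)^2 \to T^2$ together with winding-number constraints on the boundary component sitting in $F_0$, combined with an open-mapping argument ruling out strips whose valuation projection wanders between multiple necks. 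A secondary technical point is matching the sign conventions between the Floer differential, which depends on a choice of spin structure and orientation on $L$, and the Kasteleyn sign rule; this should fix the precise Kasteleyn orientation on $G$ for which the isomorphism holds.
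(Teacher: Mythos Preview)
The statement you are attempting to prove is stated in the paper as a \emph{conjecture}, not a theorem; the paper does not provide a proof. The paragraph preceding the conjecture gives exactly the heuristic you have written up: the intersection points are in bijection with vertices of $G$, and by the surgery/triangle correspondence of Fukaya--Oh--Ohta--Ono one expects the strips contributing to $\mu^1$ on the surgered Lagrangian to correspond to triangles with corners at the surgery regions $U_{e,\rho}$, hence to edges of $G$, with weights given by the holonomy $\nabla_e(1)$. So your proposal is not a different approach from the paper; it is a fleshed-out version of the same heuristic the paper uses to motivate the conjecture.

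The genuine gap, which the paper itself flags and which your sketch does not close, is that the FOOO strip--triangle correspondence is proved for surgery at \emph{transverse intersection points}, whereas here the sections $\sigma^\circ_{v,\rho}$ and $\sigma^\bullet_{w,\rho}$ overlap along the open regions $U_{e,\rho}$ and the surgery of \cref{prop:generalizedsurgeryprofile} is performed along these extended collared neighborhoods. Your neck-stretching argument degenerating the profile $(r_e,s_e)$ ``toward a point-surgery'' does not obviously make sense in this setting, since the overlap region $U_{e,\rho}$ is not a point and does not shrink to one under any limit of the construction. Establishing the analogue of the FOOO bijection for this generalized surgery is precisely the missing ingredient; your winding-number and open-mapping arguments for ruling out extraneous strips are plausible in dimension two but are not carried out, and the paper does not claim them either. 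In short: your outline recapitulates the paper's own motivation for the conjecture, identifies the same obstacles, and leaves them in the same state.
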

If this conjecture holds, we have a new tool for computing the support of the Lagrangian $L(\phi^\bullet_w, \phi^\circ_v)$, which will be determined by the zero locus of $Z^G(\nabla)$. 
\begin{example}
	A first example to look at is the Kasteleyn complex of \cref{exam:ellipticchecker}. We give the polygons of the dimer the labels from  \cref{exam:ellipticchecker}. 
	We can rewrite $Z^G(\nabla)$ as a polynomial by picking coordinates on the space of connections.
	Let $z_1$ and $z_2$ be the holonomies of a local system $\del$ along the longitudinal and meridional directions of the torus.
	The differential on the complex $C^\bullet(G, \del)$ in the prescribed coordinates is
	\[
		d_\nabla^\Sigma=
		\begin{pmatrix}
			z_2^\frac{1}{3}       & (z_1z_2)^\frac{-1}{3} & z_1^\frac{1}{3}       \\
			(z_1z_2)^\frac{-1}{3} & z_1^\frac{1}{3}       & z_2^\frac{1}{3}       \\
			z_1^\frac{1}{3}       & z_2^\frac{1}{3}       & (z_1z_2)^\frac{-1}{3}
		\end{pmatrix}.
	\]
	The determinant of $d_\nabla^\Sigma$ is  $Z^G(z_1, z_2)=3-(z_1+z_2+\frac{1}{z_1z_2})$.
	\begin{figure}
		\centering
		\input{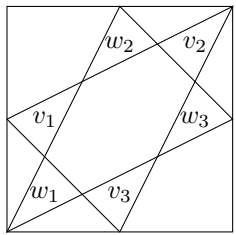}
		\caption{The labelling of faces for the dimer model}
		\label{fig:ellipticfano}
	\end{figure}
	\label{exam:dimerelliptic}
\end{example}

This polynomial is a reoccurring character in the mirror symmetry story of $\CP^2$; for example, it is the superpotential $\check W_\Sigma$ determining the mirror Landau-Ginzburg model. This computation motivates \cref{sec:tdp}.
\begin{remark}
	In the above definition, we've assumed that $\mathcal D$ has been chosen in such a way that the weights $\w_e$ are negligible and can be ignored. 
	More generally, one should look use the Novikov ring in place of $\CC$, and let $\del$ be a Novikov unitary connection. 
	The differential becomes
	\[\langle d_\nabla^\Sigma(v^\circ), v^\bullet\rangle =\sum_{\substack{e\in E(G)\\ e=v^\circ v^\bullet}} \del_e(1) T^{\w_e}.\]
	For generic choice of weights $\w_e$, there are no choices of local systems with $\det(d_\nabla)=0$; for instance, a necessary condition is that the minimal weight must show up at least twice. 
\end{remark}

\subsection{Mutations of Tropical Lagrangians}
\label{subsec:mutations}
In previous examples, we exhibited different dimer models with the same associated  tropical curve.
We now describe how the different Lagrangian lifts of these dimers are related to each other in dimension 2. 
\begin{lemma}
	Let $\{\Delta^\circ_v\},\{ \Delta^\bullet_w\}$ be a dimer model with graph $G$.
	For each face $f\in F(G)$, let $c=\partial_f$ be the boundary cycle of the face.
	Suppose that $c$ has zero weight.
	Let $\gamma_c: S^1\to  L(\phi^\bullet_w, \phi^\circ_v)$ be a lift of the cycle to the dimer Lagrangian, in the sense that 
	\[
		\arg(\gamma_c)=c.
	\]
	There exists a Lagrangian disk $D_f$ with $\partial D_f=c\subset L(\phi^\bullet_w, \phi^\circ_v)$.
	\label{lemma:disks}
\end{lemma}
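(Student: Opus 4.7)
The plan is to construct $D_f$ as a Lagrangian section of the argument projection $\arg: X \to T^n$ restricted to a region $R_f \subset T^n$ bounded by the cycle $c$, and to show that the zero weight hypothesis is exactly the cohomological obstruction to this construction.

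First I would identify the geometry: the face $f \in F(G)$ corresponds to a simply connected region $R_f \subset T^n$ lying in the complement of the dual dimer polytopes $\bigcup_v \Delta^\opm_v$, whose boundary is a polygon whose edges alternate between lying on edges of $\bullet$-polytopes and $\circ$-polytopes. Since by \cref{claim:argument} we have $\arg(L) = \bigcup_v \Delta^\opm_v$, the region $R_f$ is in the complement of $\arg(L)$. To lift the cycle $c = \partial_f$ to $\gamma_c: S^1 \to L$, I would proceed piecewise: along each boundary edge of $R_f$ contained in the argument projection of $\sigma^{\opm}_v$, lift using that section; at each vertex of $\partial R_f$ (a corner shared between two polytopes, corresponding to a surgery region $U_e$), concatenate via a path through the surgery neck. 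The result is a continuous loop on $L$ with $\arg(\gamma_c) = c$.

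Next, I would construct $D_f$. Using the identification $X = T^*T^n / T^*_\ZZ T^n$, a Lagrangian section over the simply connected region $R_f$ is the graph of $d\psi$ for a function $\psi: R_f \to \RR$, unique up to a constant. The boundary condition $s|_{\partial R_f} = \gamma_c$ demands that along each boundary edge of $R_f$ lying in some $\Delta^\opm_v$, the function $\psi - \tilde\phi^{\opm}_v$ is locally constant. The obstruction is monodromy: traversing $\partial R_f$, the locally constant differences of $\psi$ from the $\tilde\phi^\opm$ must be globally consistent. The jumps at successive surgery necks $U_e$ are, by the definition of $\mathfrak w_e$ through the tautological one-form $\eta$, exactly the edge weights appearing in \cref{def:weight}. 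Summing around $c$ gives $\mathfrak w_c = \sum_{e \in c} \sgn(e,c) \mathfrak w_e$; this vanishes by hypothesis, so $\psi$ is globally well defined on $R_f$, and its graph is a Lagrangian disk with boundary matching $\gamma_c$.

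Finally I would refine the construction near the surgery necks, where $L$ ceases to be a single section. Here the collared boundary structure from \cref{prop:generalizedsurgeryprofile} must be used: inside a small Weinstein neighborhood $B^*_c U_{e,\rho}$, the Lagrangian $L$ is already described by a graph over a prescribed profile function, and one can interpolate the Lagrangian section $d\psi$ across the collar to land exactly on $L$. The resulting disk $D_f$ sits in a neighborhood of $\arg^{-1}(\overline{R_f})$ with $\partial D_f = \gamma_c \subset L$.

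The main obstacle is the last step: verifying that the interpolation near each $U_{e,\rho}$ can be done Lagrangianly and compatibly with the global function $\psi$ constructed on the bulk of $R_f$. This requires showing that the smoothing and surgery profile data chosen in \cref{def:dimerlagrangian} can be matched to appropriately chosen boundary behavior of $\psi$ on each edge of $\partial R_f$ — which is where the flux interpretation of $\mathfrak w_e$ makes the matching transparent, since precisely when the total flux obstruction vanishes can we close up the boundary.
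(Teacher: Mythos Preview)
Your overall strategy --- build $D_f$ as the graph of an exact $1$-form over the face region, with the zero-weight hypothesis killing the monodromy obstruction --- is correct and is exactly what the paper does. But your execution introduces a type error and an unnecessary complication. You write that along each boundary edge ``the function $\psi - \tilde\phi^{\opm}_v$ is locally constant,'' but $\psi$ is a function on $R_f\subset T^n$ while $\tilde\phi^\opm_v$ is a function on $Q=\RR^n$; they cannot be subtracted. The section $\sigma^\opm_v$, viewed as a graph over $T^n$ rather than over $Q$, is governed by the Legendre transform of $\tilde\phi^\opm_v$, not by $\tilde\phi^\opm_v$ itself. This is fixable, but it means the ``jumps are the weights $\w_e$'' step needs to be redone through the Legendre-dual picture, and your stated boundary condition is not the right one.

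More importantly, the paper bypasses your piecewise assembly and your ``main obstacle'' entirely. It never tries to match $\psi$ to the individual $\tilde\phi^\opm_v$; instead it takes the already-given lift $\gamma_c(\theta)=(q(\theta),p(\theta))$ and works in a tubular neighborhood $S^1_\theta\times[-\epsilon,\epsilon]_r$ of $c$ in $T^2$. Writing $q=q_\theta\,d\theta+q_r\,dr$, the zero-weight condition is literally $\int_{S^1}q_\theta\,d\theta=0$, so $q_\theta\,d\theta=d\alpha_\theta$ for some $\alpha_\theta:S^1\to\RR$. One then sets $\alpha:=\rho(r)\bigl(\alpha_\theta+r\,q_r\bigr)$ with $\rho$ a bump function, checks $d\alpha|_c=q$, and takes $D_f$ to be the graph of $d\alpha$ over the face. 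No interpolation across surgery necks is needed, because $\gamma_c$ already threads them and the construction only uses $\gamma_c$ itself.
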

\begin{proof} 
	Let $V_f\subset T^2$ be the subset of the Lagrangian torus $T^2\subset T^*T^2$ corresponding to the face $f$. 
	Write $\gamma_c(\theta)=(q(\theta), p(\theta))$, where $q(\theta)\in T^*_{p(\theta)}T^2$.\footnote{We again apologize for the inconvenience of using $(q, p)$ for coordinates on $T^*Q/T^*_\ZZ Q$.}
	Let $u:S^1_\theta\times [-\epsilon, \epsilon]_r\to T^2$ be a coordinates on a normal neighborhood of $\gamma_c$, so that we may write $q(\theta)=q_\theta(\theta)d\theta+ q_r(\theta) dr$. 
	The zero weighting condition tells us that 
	\[
		\int_{\gamma_c}\eta= \int_{S^1} q_\theta(\theta)d\theta=0
	\]
	so there exists a primitive $\alpha_\theta: S^1\to \RR$ so that $q_\theta d\theta= d\alpha_\theta$. 
	Let $\rho:[-\epsilon, \epsilon]_r\to \RR$ be a bump function which is constantly 1 in a neighborhood of $r=0$.
	Consider the function $\alpha:=\rho\cdot (\alpha_\theta+r\cdot q_r): T^2 \to \RR$.
	We can compute that $d\alpha= d\rho\cdot (\alpha_\theta+r\cdot q_r) + \rho \cdot \left(d\alpha_\theta+ q_r dr + r\cdot dq_r\right)$. 
	Since $\rho|_c=1, d\rho|_{c}=0$ and $r|_{c}=0$, the pullback of $d\alpha$ to $c$ is $d\alpha_\theta+q_rdr=q$. 
	Therefore, $q$ can be extended to an exact 1-form on $T^2$. 
	The Lagrangian disk $D_f$ is defined by the graph of this 1-form. 
\end{proof}
The Lagrangian antisurgery  $\alpha_{D_f}L(\phi^\bullet_w, \phi^\circ_v)$  is an immersed Lagrangian, which we now describe with a dual dimer model.
Let $\partial f:=\{v^\bullet_1, v^\circ_1, \ldots, v^\bullet_k, v^\circ_k \}$ be the sequence of vertices of $G$ corresponding to the boundary of $f$. 
Recall that $\Sigma$ is the set of cycles in $T^2$ given by the boundary polygons of the dual dimer model.
Let $\Im(\Sigma)\subset T^2$ be the image of these cycles.
After taking an isotopy of $c$, we may assume that $\arg(c)\subset \Im(\Sigma)$. 
We can also require that $\arg(c)$ is a homeomorphism onto its image.
We now take a parameterization 
\[
	h: S^1\times [-1, 1] \to L(\phi^\bullet_w, \phi^\circ_v)
\]
 for a neighborhood  of  $\gamma_c\subset L(\phi^\bullet_w, \phi^\circ_v)$, with $h( \theta,0)=\gamma_c(\theta)$. 
 The boundary components of the collar $h:S^1\times [-1, 1]$ give two cycles in $L(\phi^\bullet_w, \phi^\circ_v)$, which we will label
 \begin{align*}
	\gamma_c^\bullet:=& h(\theta, -1)\\
	\gamma_c^\circ:=& h(\theta, 1)
 \end{align*}
 The path $\gamma$ has argument contained within $\Sigma$, but we require  the map $h(\theta, t):S^1\times [-1, 1]\to L(\phi^\bullet_w, \phi^\circ_v)$ have argument  
\begin{align*}
	\Im(\arg\circ \gamma_{c^\bullet})\subset \Sigma\cup \{\Delta^\bullet_w\}\\
	\Im(\arg\circ \gamma_{c^\circ  })\subset \Sigma\cup \{\Delta^\circ_v\}
\end{align*}
which ``alternates'' between bleeding into argument of $L(\phi^\bullet_w, \phi^\circ_v)$. By \cref{claim:argument}, this is the collection of $\Delta^\circ_v$ and $\Delta^\bullet_w$ polytopes. See \cref{fig:twiststrip}. 
\begin{figure}
	\centering
	\input{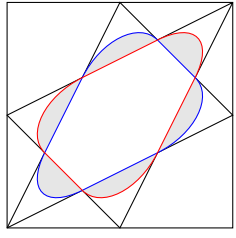}
	\caption{The strip $h$ is parameterized to twist into the  $\Delta^\circ_v$ and $\Delta^\bullet_w$ polytopes.}
	\label{fig:twiststrip}
\end{figure}
We now state this alternating condition.
We require at each $\theta$ exactly one of the three following cases occur:
\begin{itemize}
	\item 
	That the $\circ$ component bleeds out of $\Sigma$ into the interior of the dimer so $\arg\circ  h(\theta, 1)\not\in \Sigma$ 
	\item
	That the $\bullet$ component bleeds out of $\Sigma$ into the interior of the dimer so $\arg\circ h(\theta, -1)\not\in \Sigma$
	\item 
	Neither boundary component bleeds out of $\Sigma$, but the collar $h$ passes through the vertex connected to polytopes in our dimer model so $\arg\circ h(t, \theta)$ maps to a vertex of the   $\Delta^{\opm}_i$. 
\end{itemize}

After performing the Lagrangian surgery, the band parameterized by $h$ will be replaced with two disks $D^\bullet_f$ and $D^\circ_f$.
The boundaries of $D^{\opm}_f$ are the cycles $\gamma_{c^{\opm}}$.

The disk $D^\bullet_f$ glues the polygons $\Delta_{v_i}^\bullet$ which lie along the cycle $\gamma_{c^\bullet}$ to each other.
Similarly, the disk $D^\circ_f$ connects the $\Delta^\circ_{w_i}$ together. 
In summary, the polygons in the cycle $c$ are replaced with two larger polygons in the antisurgery:
\begin{align*}
		\Delta_{f^\bullet}:= & \text{Hull}_{v_i^\bullet\in \partial f} (\Delta_v^\bullet) \\
		\Delta_{f^\circ}:=   & \text{Hull}_{v_i^\circ\in \partial f}( \Delta_w^\circ).
\end{align*}
\begin{lemma}
	Consider a dimer model $\{\Delta^\circ_v\},\{ \Delta^\bullet_w\}$.
	Let $f$ be a face of $G$. 
	Suppose that the boundary of $f$ has zero weight.
	The antisurgery  $\alpha_{D_f} L(\phi^\bullet_w, \phi^\circ_v)$ is again described by a higher  dimer model, whose polygons are given by the collections
	\begin{align*}
		\{\Delta_v^\bullet \;|\; \text{ for all $v^\bullet \not\in \partial f$} \} \cup \{\Delta_{f^\bullet}\}	                     \\
		\{\Delta_w^\circ   \;|\; \text{ for all $w^\circ   \not\in \partial f$} \} \cup \{\Delta_{f^\circ }\}                     \\
	\end{align*}
	\label{lemma:dimerantisurgery}
\end{lemma}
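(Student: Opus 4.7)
The plan is to identify $\alpha_{D_f}L(\phi^\bullet_w,\phi^\circ_v)$ locally as the replacement of a strip around $\gamma_c=\partial D_f$ by two Lagrangian disks $D^\bullet_f$ and $D^\circ_f$, and then to check that this modification matches the dimer Lagrangian built from the enlarged polytopes $\Delta_{f^\bullet}$ and $\Delta_{f^\circ}$.

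First, using the parameterization $h:S^1\times[-1,1]\to L$ introduced before the statement of the lemma, Lagrangian antisurgery along $D_f$ cuts $L$ along $\gamma_c$ and caps the two resulting boundary circles $\gamma_{c^\bullet}$ and $\gamma_{c^\circ}$ off by Lagrangian disks $D^\bullet_f$ and $D^\circ_f$. Because $h$ was chosen so that $\arg(\gamma_{c^\bullet})\subset \Sigma\cup\bigcup_{v^\bullet_i\in\partial f}\Delta^\bullet_{v^\bullet_i}$, and symmetrically for $\gamma_{c^\circ}$, the argument projection of $D^\bullet_f$ fills in the region enclosed by the $\Delta^\bullet_{v^\bullet_i}$, which is precisely the convex hull $\Delta_{f^\bullet}$; analogously $\arg(D^\circ_f)=\Delta_{f^\circ}$. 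Consequently the argument projection of $\alpha_{D_f}L$ near the collar is $\Delta_{f^\bullet}\cup\Delta_{f^\circ}$, as required.

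Second, I would verify that the new collection of polytopes $\{\Delta_v^\bullet\mid v^\bullet\notin\partial f\}\cup\{\Delta_{f^\bullet}\}$ and $\{\Delta_w^\circ\mid w^\circ\notin\partial f\}\cup\{\Delta_{f^\circ}\}$ satisfies the axioms of \cref{def:dualdimer}. The vertices of $\Delta_{f^\bullet}$ that survive the hull operation correspond to edges of $G$ incident to $\partial f$ but not contained in $\partial f$; at each such vertex the matching with the opposite colour polytope (either an unchanged $\Delta_w^\circ$ or the new $\Delta_{f^\circ}$) and the opposite-direction condition on the edges are inherited directly from the original dual dimer data. Once this is checked, the Lagrangian built via \cref{def:dimerlagrangian} from the new data coincides with $\alpha_{D_f}L$ outside a neighborhood of $\val^{-1}(V(\phi^\bullet_{f^\bullet}))$, where both Lagrangians reduce to $L$, and inside this neighborhood both are surgeries of the sections $\sigma^\bullet_{v^\bullet_i}$ along disks whose argument projections cover the disappearing face $f$. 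Matching surgery profiles to arrange equal flux classes turns this identification into a Lagrangian isotopy.

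The main obstacle is the second step: showing that the merged polytopes $\Delta_{f^\bullet}$ and $\Delta_{f^\circ}$ satisfy the opposite-direction edge condition of a dual dimer. The zero-weight hypothesis on $\partial f$, combined with the bipartite alternation of $\bullet$ and $\circ$ vertices along $\partial f$, is what forces the edges of $\partial\Delta_{f^\bullet}$ and $\partial\Delta_{f^\circ}$ that persist under the hull operation to pair up in opposite directions at matching vertices, as required.
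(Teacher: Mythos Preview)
Your approach is essentially the same as the paper's: the discussion preceding the lemma already explains that antisurgery replaces the band $h(S^1\times[-1,1])$ by two capping disks $D^\bullet_f$ and $D^\circ_f$ with boundaries $\gamma_{c^\bullet}$ and $\gamma_{c^\circ}$, and that these glue the $\bullet$-polytopes (resp.\ $\circ$-polytopes) along $\partial f$ into the hulls $\Delta_{f^\bullet}$ and $\Delta_{f^\circ}$. Your first paragraph reproduces this argument, and your second paragraph adds a verification of the dual-dimer axioms that the paper leaves implicit.

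One point to correct: in your final paragraph you attribute the opposite-direction edge condition on the new polytopes to the zero-weight hypothesis. This is not right. The zero-weight hypothesis is used only in \cref{lemma:disks} to guarantee that $\gamma_c$ bounds a Lagrangian disk $D_f$; it is a symplectic condition (the integral of $\eta$ over $\gamma_c$ vanishes) and plays no role in the combinatorics of the hulls. The opposite-direction condition at surviving vertices of $\Delta_{f^\bullet}$ and $\Delta_{f^\circ}$ is inherited directly from the original dual-dimer data: a surviving vertex of $\Delta_{f^\bullet}$ is a vertex of some $\Delta^\bullet_{v_i}$ adjacent to a polytope of the opposite colour not absorbed into $\Delta_{f^\circ}$ (or to $\Delta_{f^\circ}$ itself), and the edge directions there are unchanged. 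The bipartite alternation along $\partial f$ is what matters here, not the weight.
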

The graph for this dimer is immersed.
For example, from right to left, the Figures \ref{fig:mutationhexagons}, \ref{fig:mutatedhexagon}, and \ref{fig:smoothtorus} describe the antisurgery of a dimer Lagrangian by first obtaining an immersed Lagrangian, then smoothing out the resulting immersed Lagrangian to build an embedded tropical Lagrangian. 
The corresponding dimers are drawn in Figure \ref{fig:immerseddimer}. 
Figure \ref{fig:reallyimmerseddimer} is an immersed dimer, with two vertices of degree three lying atop each other.

\begin{figure}
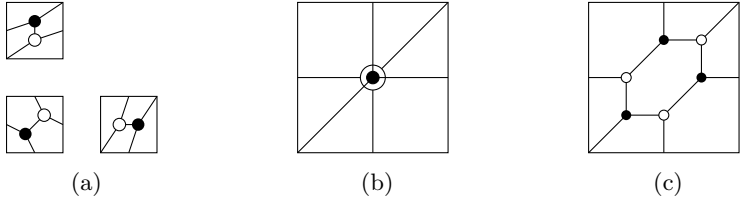

	\centering
	\begin{subfigure}{.3\linewidth}
		\centering
		\input{figures/dimerelliptic.tikz}
		\caption{}
	\end{subfigure}
	\begin{subfigure}{.3\linewidth}
		\centering
		\input{figures/dimerelliptic2.tikz}
		\caption{}
		\label{fig:reallyimmerseddimer}
	\end{subfigure}
	\begin{subfigure}{.3\linewidth}
		\centering
		\input{figures/dimerelliptic3.tikz}
		\caption{}
	\end{subfigure}
	\caption[An immersed dimer on the torus]{The dimers associated to Figure \ref{fig:toricdimermutation}. The middle figure is an immersed dimer with two vertices of degree three, and has a resolution to an embedded dimer on the right.}
	\label{fig:immerseddimer}
\end{figure}
\subsubsection{Seeds and surgeries}
Besides using antisurgery to modify Lagrangian submanifolds, we may use the presence of antisurgery disks for  $L(\phi^\bullet_w, \phi^\circ_v)$ to construct a Lagrangian seed in the sense of  \cite{pascaleff2020wall}.
\begin{definition}[\cite{pascaleff2020wall}]
	A \emph{Lagrangian seed} $(L, \{D_i\})$ is a monotone Lagrangian torus $L\subset X$ along with a collection of antisurgery disks $\{D_i\}$ for $L$ with disjoint interiors, and an affine structure on $L$ making $\partial D_i$ affine cycles. 
	Should the $\partial D_i\subset L$ be the edges of an affine zigzag diagram, we say that this seed gives a dimer configuration on $L$.
\end{definition}
Whenever we have an mutation seed giving a dimer configuration on $L$, we can build a dual Lagrangian using the same surgery techniques used to construct tropical Lagrangians.
We start by taking a Weinstein neighborhood $B_\epsilon^*L$ of $L$.
Let $\{\Delta^\circ_v\},\{ \Delta^\bullet_w\}$ be the dimer model on $L$ induced by the Lagrangian seed structure. 
Using \cref{def:dimerlagrangian}, we can construct  $L(\{\phi^\circ_v\},\{ \phi^\bullet_w\})$ in the neighborhood $B_\epsilon^*L$.
The boundary of $L(\{\phi^\circ_v\},\{ \phi^\bullet_w\})$ is contained in the $\epsilon$-cotangent sphere $S^*_\epsilon L$ and consists of the   $\epsilon$-conormals Legendrians $N^*_\epsilon(\partial D_i)$.
After taking a Hamiltonian isotopy, the disks $\{D_i\}$ can be made to intersect $S^*_\epsilon L$ along $N^*_\epsilon(\partial D_i)$. 
By gluing the dimer Lagrangian to these antisurgery disks, we  compactify $L(\{\phi^\circ_v\},\{ \phi^\bullet_w\})\subset B_\epsilon^*L$ to a Lagrangian $L^*\subset X$.

\begin{definition}
	Let  $(L, \{D_i\})$  be a Lagrangian seed giving a dimer configuration on $L$. We call the Lagrangian  $L^*\subset X$ the dual Lagrangian to $(L, \{D_i\})$. 
	\label{def:dualdimerlagrangian}
\end{definition}

One way to interpret this construction is that a Lagrangian seed has a small symplectic neighborhood which may be given an almost toric fibration. 
The dual Lagrangian $L^*$ is a compact tropical Lagrangian built inside of this almost toric fibration.

By \cref{lemma:dimerantisurgery} the Lagrangian $L^*$ possesses a set of antisurgery disks given by the faces of the dimer graph on $L$. 
Should the antisurgery disks $D_f$ with boundary on $L^*$ form a mutation configuration, we call $(L^*,\{D_f\})$ the dual Lagrangian seed. 

\begin{remark}
	The geometric portion of this construction does not require $L$ or $L^*$ to be tori, although statements about mutations of Lagrangians from \cite{pascaleff2020wall} and relations to mirror symmetry use that $L$ is a torus.
\end{remark}

\section{Tropical Lagrangians in Almost Toric Fibrations}

\label{sec:almosttoric}

Much of the machinery we have constructed for building Lagrangians lifts of tropical hypersurfaces in the fibration $(\CC^*)^n\to \RR^n$ carries over to building tropical Lagrangian hypersurfaces for almost toric fibrations $X\to Q$ with the dimension of the base $\dim Q=2$.
In \cref{subsec:syzthimbles} we look at the local model of a node in a almost toric base diagram and show that lifts of tropical curves can be constructed for tropical curves with edges meeting the singular strata of $Q$ along the eigendirection.
\Cref{subsec:tropicalpantslefschetz} continues using local models from the node based on Lefschetz fibrations to show that isotopy of tropical curves ``through'' a node of the base extend to isotopies of the Lagrangian lifts.
\subsection{Lifting to Tropical Lagrangian Submanifolds}
\label{subsec:syzthimbles}
Recall that $X\to Q$ is an almost toric fibration, $X^0\to Q^0=Q\setminus \Delta$ is an honest toric fibration in the complement of the discriminant locus $\Delta$.  
By abuse of notation, when we are given a tropical section $\phi\in \dTrop(U)$ where $U\subset Q^0$, we will write $\sigma_\phi:U \to X|_U$ to mean the Lagrangian section defined over the bundle $X|_U\to U$ given by some choice of smoothing parameter (see \cite{hicks2020tropical}).
It is immediate that we can use the existing surgery lemma to build tropical Lagrangians away from the critical locus. 
\begin{claim}
	Let $\val:X\to Q$ be an almost toric Lagrangian fibration.
	Suppose that $ V(\phi)\subset Q$ is a (smooth) tropical curve which is disjoint from the critical locus. 
	Then there exists an (embedded) Lagrangian submanifold $L(\phi)\subset X$ whose valuation projection lies in a small neighborhood of $V$. 
	Furthermore, if $Q$ has no boundary, there exists a tropical section $\phi$ so that $L(\phi)=\sigma_0\#\sigma_{-\phi}$. 
\end{claim}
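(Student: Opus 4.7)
The plan is to reduce the almost toric setting to the standard toric case from \cite{hicks2020tropical} by working locally on $Q^0 = Q \setminus \Delta$, and then to glue. Since $V(\phi)$ is assumed disjoint from the critical locus (which includes both $\Delta$ and $\partial Q$), there is an open neighborhood $U$ of $V(\phi)$ contained entirely in $Q^0 \setminus \partial Q$. Over $U$, the almost toric fibration $X|_U \to U$ is a genuine Lagrangian torus bundle, locally symplectomorphic on each integral affine chart to the standard model $T^*U_\alpha / T^*_\ZZ U_\alpha \to U_\alpha$.

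First I would pick a cover $\{U_\alpha\}$ of $U$ by integral affine charts. By \cref{def:affinedifferentials}, on each $U_\alpha$ the tropical section $\phi$ is represented by a piecewise linear function $\phi_\alpha: U_\alpha \to \RR$, well-defined up to a constant and up to a section of $T^*_\ZZ Q$ on overlaps. Choosing smoothings $\tilde{\phi}_{\alpha,\rho}$ as in \cref{def:tropicallagsection}, form the local Lagrangian sections $\sigma_0|_{U_\alpha}$ and $\sigma_{-\phi_\alpha,\rho}$ of $X|_{U_\alpha}$. Applying \cref{prop:generalizedsurgeryprofile} on the region where these two sections meet (a collared neighborhood of the convex locus of $-\phi_\alpha$), I obtain an embedded local Lagrangian $L_\alpha := \sigma_0 \#_{U_{\phi_\alpha}} \sigma_{-\phi_\alpha}$ whose valuation lies in a small neighborhood of $V(\phi) \cap U_\alpha$. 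Smoothness of $V(\phi)$ locally in $U_\alpha$ guarantees that the surgery happens along a contractible (or at least well-behaved) region so that $L_\alpha$ is embedded.

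The main obstacle is the gluing across overlaps $U_\alpha \cap U_\beta$. Here the local primitives satisfy $\phi_\alpha - \phi_\beta \in T^*_\ZZ Q(U_\alpha \cap U_\beta) + \RR$, i.e., they differ by a constant plus an integral affine function. Passing to the quotient $X = T^*Q / T^*_\ZZ Q$, the sections $\sigma_{-\phi_\alpha,\rho}$ and $\sigma_{-\phi_\beta,\rho}$ then represent the same map into $X|_{U_\alpha \cap U_\beta}$, so the surgered Lagrangians $L_\alpha$ and $L_\beta$ agree on the overlap provided the smoothing kernels and surgery profile data are chosen compatibly. Using a partition of unity subordinate to $\{U_\alpha\}$ to patch the smoothing parameters (and invoking that different choices of surgery data yield Lagrangian isotopic outputs, so the gluing is canonical up to isotopy), the local pieces assemble into an embedded Lagrangian submanifold $L(\phi) \subset X$ with valuation in a neighborhood of $V$.

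For the final statement, when $Q$ has no boundary the section $\sigma_0: Q \to X$ is globally defined (as $Q \setminus \Delta$ is the complement of a codimension-$2$ set and the zero section extends across), and the tropical section $\phi$ globally defines $\sigma_{-\phi}: Q \setminus \Delta \to X$ which, after smoothing, extends trivially across $\Delta$ since $V(\phi)$ is disjoint from $\Delta$. The global surgery $\sigma_0 \# \sigma_{-\phi}$ is then well defined and, by uniqueness of the local construction up to isotopy, Hamiltonian isotopic to the $L(\phi)$ produced above. The principal technical point — and where I would spend the most care — is verifying the compatibility of surgery profile data across overlapping charts so that the gluing is not merely set-theoretic but produces a smooth Lagrangian; this is essentially the same compatibility argument underlying \cref{claim:codimension0matches}.
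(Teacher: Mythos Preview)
Your proposal is correct and matches the paper's approach: the paper does not give a detailed proof of this claim at all, merely prefacing it with the remark ``It is immediate that we can use the existing surgery lemma to build tropical Lagrangians away from the critical locus.'' Your argument is a faithful elaboration of exactly this --- reducing to local affine charts on $Q^0$ where the fibration is standard, invoking the construction from \cite{hicks2020tropical} and \cref{prop:generalizedsurgeryprofile}, and gluing across overlaps via the $T^*_\ZZ Q$-ambiguity built into the definition of tropical sections --- so there is nothing to contrast.
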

Note that in the non-smooth setting, the number of self-intersection points of the Lagrangian $L(\phi)$ is determined by the sum of the tropical genera  of the non-smooth vertices.
Smoothing the tropical curve by deformation is equivalent to removing the self intersection by Lagrangian surgery. 
In the almost toric case ($\dim(Q)=2$), we can find a Lagrangian lift when the interior of $V$ avoids the critical locus. 
This is built on the following local model, whose monodromy was described in \cite{symington71four}.
\begin{claim}
	Let $X=\CC^2\setminus \{z_1z_2=1\}$ be the symplectic manifold with symplectic fibration 
	\begin{align*}
		W: \CC^2\setminus \{z_1z_2=1\}\to       & \CC\setminus\{1\}    \\
		(z_1, z_2)\mapsto & z_1z_2
	\end{align*} 
	and let $\val: X\to Q$ be the almost toric Lagrangian fibration described in \cite[Section 5.1]{auroux2007mirror}. 
	Then $Q$ has a single node $q_\times$ of multiplicity 1, and there exists a tropical Lagrangian lift of the eigenray of $q_\times$. 
	\label{claim:tropicalthimbles}
\end{claim}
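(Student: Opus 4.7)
The plan is to identify the unique node of $Q$ via the compatible Hamiltonian $S^1$-action on $X$, and then exhibit a Lefschetz thimble of $W$ as the Lagrangian lift of the eigenray.

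First I would recall the Auroux setup. The diagonal $S^1$-action $(z_1,z_2)\mapsto (e^{i\theta}z_1, e^{-i\theta}z_2)$ preserves $W = z_1 z_2$ and has moment map $\mu = \tfrac{1}{2}(|z_1|^2 - |z_2|^2)$, so that $\val$ may be taken to be $(F(W),\mu)$ for an $S^1$-invariant function $F$ of $W$ (for example $F(W) = \log|W-1|$). The smooth fibers of $\val$ are Lagrangian $2$-tori spanned by the $S^1$-orbit and the loop $\arg(W)\in S^1$ at fixed $|W|$. The critical points of $\val$ coincide with the fixed points of the $S^1$-action, namely the origin; since $(0,0)$ is a non-degenerate Lefschetz critical point of $W$, its image $q_\times := \val(0,0)$ is the unique node of $Q$, of multiplicity one, with monodromy a Dehn twist along the vanishing $S^1$-orbit. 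The eigenray $R\subset Q$ is therefore the half-line emanating from $q_\times$ along the $\mu=0$ direction.

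Next, I would exhibit the Lagrangian lift explicitly as a Lefschetz thimble. For each angle $\alpha\in S^1$ set
\[
T_\alpha := \{(z,\bar z\, e^{i\alpha}) : z\in \CC\}\subset X.
\]
A short check using the standard symplectic form gives $\omega|_{T_\alpha} = 0$, so $T_\alpha$ is Lagrangian. On $T_\alpha$ one computes $W = |z|^2 e^{i\alpha}$ and $\mu \equiv 0$, and the action $(z,\bar z e^{i\alpha})\mapsto (e^{i\theta}z,\bar z e^{i(\alpha-\theta)}\cdot e^{i\theta})$ shows $T_\alpha$ is $S^1$-invariant. Hence $\val(T_\alpha)$ is exactly the eigenray $R$, and the $S^1$-orbit fibers of $T_\alpha$ collapse to the critical point as $z\to 0$, giving a continuous Lagrangian lift of $R$ across the node.

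The main remaining task is to verify that, outside a neighborhood of $q_\times$, the thimble $T_\alpha$ agrees up to Hamiltonian isotopy with the conormal torus model $N^*R/N^*_\ZZ R$ used elsewhere in the paper to construct tropical Lagrangians. This compatibility is what is needed so that $T_\alpha$ can be glued to Lagrangian sections associated to a tropical curve continuing past $q_\times$. It reduces to a Darboux-chart calculation around a regular point of $R$: both objects are $S^1$-invariant circle bundles over $R$, sit in $\{\mu=0\}$, and carry constant $\arg(W)$, so they coincide after a small exact deformation of the fiber coordinate. I expect this matching, rather than the thimble construction itself, to be the main obstacle, since it is what promotes the local thimble to a tropical-Lagrangian building block compatible with the surgery framework of \cref{sec:dimer}.
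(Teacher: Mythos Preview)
Your argument is correct and follows essentially the same route as the paper: both identify the node via the Hamiltonian $S^1$-action with moment map $\mu$, and both take the Lefschetz thimble of $W$ at the origin as the Lagrangian lift of the eigenray $\{\mu=0\}$---you simply write the thimble down explicitly as $T_\alpha=\{(z,\bar z e^{i\alpha})\}$ where the paper invokes parallel transport of the vanishing cycle. The matching with the conormal model $N^*R/N^*_\ZZ R$ that you flag as the ``main obstacle'' is not part of the proof of this claim in the paper; it appears instead in the proof of the subsequent \cref{cor:almosttoriclift}, where the thimble is glued to a tropical Lagrangian built over $Q_0$.
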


\begin{proof}
	The claim follows from considering the construction of the almost toric fibration arising from the Lefschetz fibration $W$. 
	The rotation $(z_1, z_2) \mapsto (e^{i\theta} z_1, e^{-i\theta}z_2)$ is a global Hamiltonian $S^1$ symmetry which preserves the fibers of the fibration. 
	Let $\mu:(\CC^*)^2\to \RR$ be the moment map of this Hamiltonian action, which also descends to a moment map $\mu: W^{-1}(z)\to \RR$. 
	This map gives an SYZ fibration on the fibers of the Lefschetz fibration. 

	The base of the Lefschetz fibration $\CC\setminus\{1\}$ comes with a standard SYZ fibration by circles $1+r e^{2\pi i \theta}$.
	The symplectic parallel transport map given by the Lefschetz fibration preserves the SYZ fibration on $W^{-1}(z)$;
	as a result, one can build an SYZ fibration for the total space $\{\CC^2\setminus z_1z_2=1\}$ by taking the circles $\val_{W^{-1}(z)}^{-1}(s)$  and parallel transporting them along circles $1+re^{ i \theta}$ of the second fibration to obtain Lagrangian tori 
\[
	F_{r, s} = \{(z_1, z_2)\;|\; |z_1z_2-1|=r, \mu(z_1, z_2)=s\}.
\]
The nodal  degeneration occurs from parallel transport of vanishing cycle through the path $1+ e^{ i \theta}$. 
This corresponds to the single almost toric fiber of this fibration, a Whitney sphere, which occurs in the base when $q_\times=(1, 0).$
$Q$ comes with an affine structure by identifying the cotangent fiber at $q$ with $H^*(F_q, \RR)$, and taking the lattice to be the integral homology classes. 
The monodromy of this fibration around the Whitney sphere acts by a Dehn twist on the vanishing cycle (i.e. for $s=0$) of $F_q$. 
As a result, the coordinate $s$ is a global affine coordinate on $Q$ near $q_x$, but $r$ is not. 
The eigenray is $s=0$. 
 The Lagrangian tori $F_q$ with $q$ in the eigenray of $q_\times$ are those tori which are built from parallel transport of the vanishing cycle.
 See \cref{fig:lefschetzandtropical} for the correspondence between Lagrangians in the Lefschetz fibration and almost toric fibration. 
 \begin{figure}
	\centering
	\input{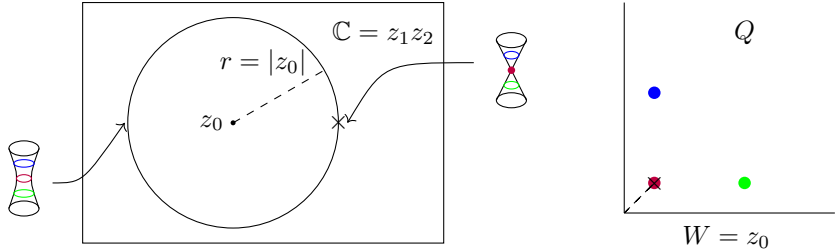}
\caption[Lagrangian tori constructed from a Lefschetz fibration]
{
Lagrangian tori constructed from a Lefschetz fibration giving an almost toric fibration.
The colored fibers correspond to cycles $\ell$ being parallel transported around a circle in the base. 
}
\label{fig:lefschetzandtropical}
\end{figure}
 We now consider the Lagrangian thimble $\tau$ drawn from the critical point $(z_1,z_2)=(0,0)$.
 As the Lagrangian thimble is a  built from a parallel transport of the vanishing cycle, it only intersects the Lagrangians $F_q$ with $q$ on the eigenray of $q_\times$. 
 Therefore, this Lagrangian thimble has valuation projection travelling in the eigenray direction of $q_\times$, proving the claim.
\end{proof}
\begin{corollary}
	Let $\val:X\to Q$ be an almost toric Lagrangian fibration over an integral tropical surface $Q$. 
	Let $V$ be a smooth tropical variety whose interior avoids the discriminant locus $\Delta$. Then there exists a tropical Lagrangian lift $L\subset X$ of $V$.
	\label{cor:almosttoriclift}
\end{corollary}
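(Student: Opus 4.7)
The plan is to build the Lagrangian lift $L$ by decomposing $V$ into pieces that avoid the discriminant locus and pieces contained in standard neighborhoods of the nodes, handling each piece with a construction already at hand, and then gluing.

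First, cover $Q$ by the open set $Q_0' = Q \setminus \bigcup_{q_\times \in \Delta \cap V} B_\epsilon(q_\times)$ together with small standard neighborhoods $B_\epsilon(q_\times)$ around each discriminant point touched by $V$. By the admissibility hypothesis on $V$, inside each $B_\epsilon(q_\times)$ the curve $V \cap B_\epsilon(q_\times)$ is a single ray parallel to the eigenray of $q_\times$. On $Q_0'$, the relevant part of $V$ is a smooth tropical curve disjoint from $\Delta$, so the preceding claim (lifting in the complement of the critical locus) produces an embedded Lagrangian $L'$ in $\val^{-1}(Q_0')$ whose valuation approximates $V \cap Q_0'$.

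Next, I would identify a standard neighborhood of each $q_\times \in \Delta \cap V$ with the local model of \cref{claim:tropicalthimbles}, namely $X_{\mathrm{loc}} = \CC^2 \setminus \{z_1 z_2 = 1\}$ with its almost toric fibration and single node at $q_\times$. The claim supplies a Lagrangian thimble $\tau$ lifting the eigenray. Restricting to $B_\epsilon(q_\times)$ yields a local Lagrangian $L_{q_\times}$ with valuation along the appropriate ray. This gives the required local tropical Lagrangian lift in each neighborhood of a node met by $V$.

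The remaining step is to glue $L'$ to the collection $\{L_{q_\times}\}$ along the collar $\partial B_\epsilon(q_\times) \times [0,\delta]$. Over this collar both Lagrangians project to the same affine ray in the base and can be arranged (via the argument projection and a choice of smoothing parameter) to be sections of $T^* Q_0' / T^*_\ZZ Q_0'$ with prescribed differential $d\phi$ for a local tropical primitive $\phi$ representing $V$. Since $Q$ is an integral tropical surface, such a primitive exists locally; making $L'$ and the thimble $L_{q_\times}$ $C^1$-close on the collar, a standard Weinstein-neighborhood Hamiltonian isotopy lets us identify them and glue to a global embedded Lagrangian $L$ whose valuation approximates $V$ everywhere.

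The main obstacle is the gluing step: one must verify that the Lagrangian section $L'$ constructed away from the node really matches the thimble $L_{q_\times}$ up to Hamiltonian isotopy in the collar. The point is that both Lagrangians fiber over the eigenray with fiber the vanishing cycle in $F_q$ (this is how the eigenray is characterized in the local Lefschetz model), so their parameterizations agree up to a choice of basepoint and a flux correction. Since the total flux around the node can be absorbed into the choice of smoothing profile for $L'$ (exactly as in \cref{lemma:weightchoices}), we can arrange the gluing data to make the two pieces match along $\partial B_\epsilon(q_\times)$, producing the desired global tropical Lagrangian lift $L$ of $V$.
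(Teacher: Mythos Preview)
Your proposal is correct and follows the same overall strategy as the paper: construct the lift away from the nodes using the previous claim, use the thimble from \cref{claim:tropicalthimbles} at each node, and glue along collars where both pieces are cylinders over the eigenray with fiber the vanishing cycle.

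The one place where the paper is cleaner is the gluing step. Rather than arguing via $C^1$-closeness, Weinstein neighborhoods, and a flux correction, the paper first invokes \cref{claim:codimension0matches} to Hamiltonian isotope the open piece $\mathring L$ so that, away from the vertices $V^1$ of the tropical curve, it is \emph{exactly} the periodized conormal $N^*V^0/N^*_\ZZ V^0$. Since the discriminant points lie on edges (not vertices) of $V$, in each collar near a node both $\mathring L$ and the restricted thimble are literally the same Lagrangian cylinder, and the gluing is immediate. Your appeal to \cref{lemma:weightchoices} is not quite the right tool here (that lemma concerns edge weights of dimer Lagrangians, not thimble gluings), and the sentence describing $L'$ and $L_{q_\times}$ as ``sections of $T^*Q_0'/T^*_\ZZ Q_0'$'' is inaccurate --- over the collar they are conormal cylinders, not sections --- but the underlying geometric picture you describe is correct.
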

\begin{proof}
	First, construct the lift of $V$ to a Lagrangian  $\mathring L$ on $X\setminus X^0$. 
	Let $V^0$ be the collection of top dimensional strata of $V$, and $V^1$ the set of vertices of $V$.
	By \cref{claim:codimension0matches} we may take a Hamiltonian isotopy of $\mathring L$ (which we still denote as $\mathring L$) so that $\mathring L|_{X\setminus (\val^{-1}(B_\epsilon(V^1)))}= N^* V^0/N^*_\ZZ V^0|_{X\setminus (\val^{-1}(B_\epsilon(V^1)))}$.
	In particular, the discriminant locus is disjoint from $B_\epsilon (V^1)$, and so  $\val:\mathring L\to Q^0$ honestly fibers over the tropical curve $V$ near the discriminant locus.

	It remains to compactify $\mathring L$ to a Lagrangian submanifold of $X$. 
	At each point $q_i\in X^0$, we take a neighborhood $B_i$ of $q_i$ and model it on the standard neighborhood from \cref{claim:tropicalthimbles}. 
	The portion of $\mathring L$ with valuation over $B_i$ is a Lagrangian cylinder given by the periodized conormal to the eigenray of $q_i$. 
	Similarly, the thimble $\tau_i$ restricted to this valuation is a Lagrangian cylinder given by the periodized conormal to the eigenray of $q_i$. 
	Therefore, we may compactify $\mathring L$ to a Lagrangian $L\subset X$ by gluing the thimbles $\tau_i$ to $L$ at each nodal point such that $q_i\in V$. 
\end{proof}

This allows us to build tropical Lagrangian lifts of the tropical curves described in \cref{fig:tpII,fig:tpIV,fig:tpIII}. 
We may generalize the examples of compact Lagrangian tori in $\CP^2$ to more toric symplectic manifolds with $\dim_\CC(X)=2$.
Let $X_\Sigma$ be a toric surface, and let $\val: X_\Sigma\to Q^{dz}_\Sigma$ be the standard moment map projection. 
The moment polytope $Q^{dz}$ is an example of an almost toric base diagram. 
Consider the almost toric fibration $\val: X_\Sigma\to Q_\Sigma$ obtained by applying a nodal trade to each corner of the moment polytope. The boundary of $Q$ is now an affine circle, corresponding to a symplectic torus $E\subset X_\Sigma$.
\begin{example}
	The neighborhood of $\partial Q_\Sigma$ is topologically  $\partial Q_\Sigma\times [0, \epsilon)_t$.
	For fixed real constant $0< r< \epsilon$, we construct the tropical function $r\oplus t$, which only has dependence on collar direction $t$. 
	This extends to a tropical function over $Q_\Sigma$, whose critical locus is an affine circle pushed off from the boundary $\partial Q_\Sigma$. 
	The critical locus is a tropical curve which avoids the discriminant locus, so there is an associated Lagrangian torus $L_r^{\partial Q_\Sigma}\subset X_\Sigma$ corresponding to this tropical curve. 

	This Lagrangian torus can also be constructed without using the machinery of Lagrangian surgery.
	Let $\gamma\subset E$ be a curve.
	There is a neighborhood $D$ of $E\subset X_\Sigma$ which is a  disk bundle $D\to E$.
	There is a standard procedure to take $\gamma$ and lift it to a Lagrangian $\partial D_\gamma$, the union of real boundaries of this disk bundle along the curve $\gamma$. 
	See \cref{exam:outertropical}.
\end{example}

As one increases the parameter $r$, the Lagrangian $L_r^{\partial Q_\Sigma}$ approaches the critical locus $\Delta_\Sigma$. 
One can continue this family of Lagrangian submanifolds past the critical locus. 

\begin{example}
	In the above example, each nodal point $q_i$ corresponds to a corner of the Delzant polytope $Q^{dz}_\Sigma$.
	The index $i$ is cyclically ordered by the boundary of the Delzant polytope. 
	Let $\Sigma_i$ be the fan generated by vectors $v^-_i, v^+_i$ given by the edges of the corner corresponding to $q_i$. 
	Let $v^\lambda_i$ be the eigenray of $q_i$. 
	Then $\Sigma_i\cup \{v^\lambda_i\}$ is a balanced fan. 
	At each nodal point $q_i$, consider the tropical pair of pants with legs in the directions $\Sigma_i\cup\{v^\lambda_i\}$. 

	The legs of adjacent pairs of pants (from the cyclic ordering) match so that $v^-_i=-v^+_{i+1}$.
	This means that if the pairs of pants are properly placed (say so that the distance from the vertex of the pair of pants along the eigenray direction to the boundary $\partial Q_\Sigma$ are all equal) these assemble into a tropical curve.

	This is a tropical curve whose interior is disjoint from the critical locus, and thus lifts to a tropical Lagrangian with the topology of a torus in $X_\Sigma$. 
	See \cref{exam:innertropical}
\end{example}

\begin{figure}
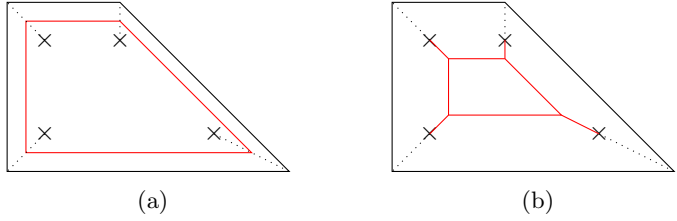
\centering
	\begin{subfigure}{.4\linewidth}
		\centering
		\input{figures/cliffordtropical.tikz}
		\caption{}
		\label{exam:outertropical}
	\end{subfigure}
	\begin{subfigure}{.4\linewidth}
		\centering
		\input{figures/chekanovtropical.tikz}
		\caption{}
		\label{exam:innertropical}
	\end{subfigure}
	\caption{Some more examples of tropical sections}
\end{figure}

\subsection{Nodal Trade for Tropical Lagrangians}
The tropical curves from \cref{exam:innertropical,exam:outertropical} are related via an isotopy of tropical curves.
We now introduce some notations for Lagrangians in Lefschetz fibrations which will allow us to show that this isotopy of tropical curves can be lifted to their corresponding tropical Lagrangians. 
\label{subsec:tropicalpantslefschetz}
The local model for the nodal fiber in an almost toric base diagram is built from a Lefschetz fibration. 
The goal of this section is to build some geometric intuition for interchanging these two different perspectives. 
We now describe three Lagrangian submanifolds which will serve as building blocks in Lefschetz fibrations, similar to those considered in \cite{biran2017cone}. See \cref{fig:buildingblock}.

The first piece is suspension of Hamiltonian isotopy.
Given a path $e:[0, 1]\to \CC$ avoiding the critical values of $W: X\to \CC$, and Hamiltonian isotopic Lagrangians $\ell_0$ and $\ell_1$ in $W^{-1}(e(0))$ and $W^{-1}(e(1))$, we can create a Lagrangian $L^\ell_e$ which is the suspension of Hamiltonian isotopy along the path $e$. 
The image of this suspension under $W$ sweeps out some area in the base of the Lefschetz fibration related to the Hofer norm of the isotopy. 
We work with Hamiltonian isotopies small enough so that the projection of their suspensions avoids the critical values.
This Lagrangian has two boundary components, one above $e(0)$ and one above $e(1)$.
In practice, we will simply specify the Lagrangian $\ell_0$ and assume that the Hamiltonian isotopies are negligible.

The second building block that we consider are the \emph{Lagrangian thimbles}, which are the real downward flow spaces of critical points in the fibration.
These can also be characterized by taking a path $e:[0, 1]\to \CC$ with $e(0)$ a critical value of $W:X\to \CC$, and letting $\ell$ be a vanishing cycle in $W^{-1}(e(1/2))$ for a critical point in $W^{-1}(e(0))$.
The Lagrangian thimble, also denoted $L^\ell_e$, has single boundary component above $e(1)$.

The third building block we will use comes from Lagrangian cobordisms. In any small contractible neighborhood $U\subset \CC$ which does not contain a critical value of $W: X\to \CC$, we can use symplectic parallel transport to trivialize the fibration so it is $W^{-1}(p)\times D^2$ for some $p\in U$.
We then consider cycles $\ell_{1}, \ell_2, \ell_3\subset W^{-1}(p)$ so that $\ell_1\#\ell_2=\ell_3$ with neck size $\epsilon$.
\cite{biran2015lagrangian} constructs a trace cobordism (there, called a $Y$-surgery) of the Lagrangian surgery between these three cycles in the space $W^{-1}(p)\times \CC$.
Given paths $e_1, e_2, e_2\subset D^2$ indexed in clockwise order, with $e_i(1)=p$, we let $L_{e_i}^{\ell_i}$ be the trace cobordism of the surgery between the $\ell_i$ with support living in a neighborhood of the edges $e_i$.
This Lagrangian has three boundary components, which live above $e_i(0)$.
\begin{figure}
	\centering
	\input{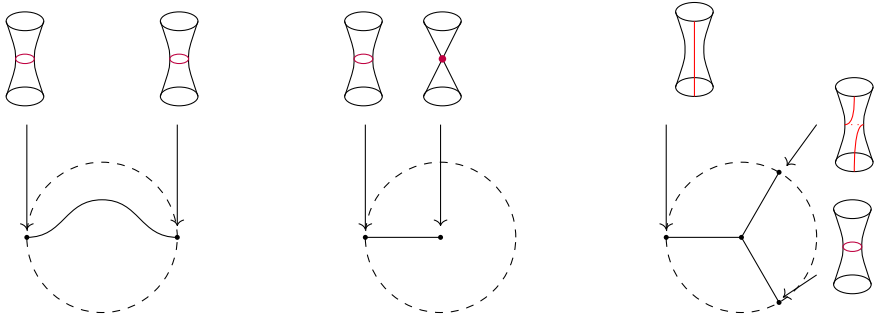}
	\caption[Building blocks for Lagrangian gloves]
	{
		The three different building blocks for a Lagrangian glove: parallel transport, thimbles, and trace of a surgery. 
	}
	\label{fig:buildingblock}
\end{figure}

These pieces glue together to assemble smooth Lagrangian submanifolds of $X$ whenever the ends of the pieces (determined by their intersection with the fiber) agree with each other. 
\begin{definition}
	Let $W: X\to \CC$ be a symplectic fibration.
	A \emph{Lagrangian glove} $L\subset X$ is a Lagrangian submanifold so that for each point $z\in \CC$, there exists a neighborhood $U\ni z$ so that $W^{-1}(U)\cap L$ is one of the three building blocks given above. 
\end{definition}
The reason that we look at Lagrangian gloves is that they can be specified by the following pieces of data:
	\begin{itemize}
		\item
			  A planar graph $G\subset \CC$.
			  This graph is allowed to have semi-infinite edges and loops.
		\item
		    A Lagrangian submanifold $\ell_e\subset W^{-1}(e(0))$ labelling each edge $e\in G$.
	\end{itemize}
This data will correspond to a Lagrangian glove if it satisfies the following conditions:
	\begin{itemize}
		\item
		      The interior of each edge is disjoint from the critical values of $W$.
		\item
		      Outside of a compact set, the semi-infinite edges are parallel to the positive real axis.
		\item
		      All vertices of $G$ have degree $1$ or degree $3$.
		\item
		      Every vertex of degree 1 must lie at a critical value. Furthermore,  the incoming edge $e$ to the vertex $v$ is labelled with a vanishing cycle of the corresponding critical fiber.
		\item
		      Every vertex of degree 3 with incoming edges $e_1, e_2, e_3$ must have corresponding Lagrangian labels $\ell_1, \ell_2$ and $\ell_3$ which satisfy the relation $\ell_1\# \ell_2=\ell_3$ for a surgery of neck size small enough that there exists a disk $D\supset v$ containing the trace of this surgery.
	\end{itemize}
	Such a collection of data gives us a Lagrangian $L^{\ell_e}_G\subset X$.
	
We will diagram these Lagrangians by additionally picking a  choice of branch cuts $b_i$ for $\CC$ so that $W: (X\setminus W^{-1}(b_i))\to (\CC\setminus\{b_i\})$ is a trivial fibration. We can then consistently label the edges of the graph $G\subset \CC$ with Lagrangians in $\ell_e\in W^{-1}(p)$ for some fixed non-critical value $p$.  
Graph isotopies which avoid the critical values correspond to isotopic Lagrangians; furthermore, as long as the label of an edge does not intersect the vanishing cycle of a critical value, we are allowed to isotope an edge over a critical value.

There is another type of isotopy which comes from interchanging Lagrangian cobordisms with Dehn twists \cite{mak2018dehn,abouzaid2018khovanov}, which we now describe. 
Let $v$ be a trivalent vertex with edges $e_1=vw_1, e_2=vw_2, e_3=vw_3$. 
Suppose that the degree of $w_2$ is one (so that $w_2$ is a critical value).
Suppose additionally that the Lagrangians $\ell_1$ and $\ell_2$, the labels above $e_1$ and $e_2$, intersect at a single point so that the surgery performed is the standard one at a single transverse intersection point.
Let $H$ be the graph obtained by replacing $e_1, e_2, e_3$ with a new edge $f_{1,3}$ which has vertices $w_1, w_3$, and is obtained travelling along $e_1$, out along $e_2$ and around the critical value $w_2$, and returning along $e_2$ and $e_3$ (See \cref{fig:dehntwistexchange}). 
\begin{figure}
	\centering
	\input{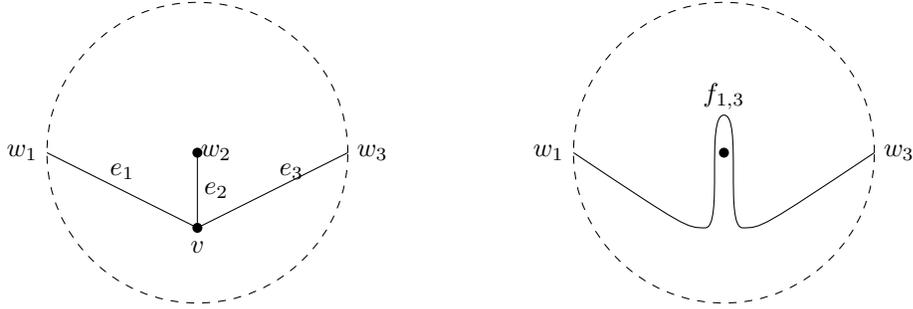}
	\caption
	{
		One can add or remove Lagrangian thimbles by exchanging them for Dehn twists.
	}
	\label{fig:dehntwistexchange}
\end{figure}
Then the graph $H= G\cup \{f_{1,3}\}\setminus\{e_i\}$ equipped with Lagrangian labelling data inherited from $G$ (with the additional label $\ell_{f_{1,3}}=\ell_{e_1}$) is again a Lagrangian glove.
We call the Lagrangian obtained via this exchanging operation  $\tau_{w_2} L_G^{\ell_e}$. 
In summary:
\begin{prop}
	The following operations produce Lagrangian isotopic Lagrangian gloves.
	\begin{itemize}
		\item
			  Any isotopy of the graph $G$ where the interior of the edges stay outside the complement of the critical values of $W$.
		\item 
				Any isotopy of the graph $G$ where an edge passes through a critical value, but the Lagrangian label of the edge is disjoint from the vanishing cycles of the critical fibers.
		\item
		      Exchanging the Lagrangian $L_G^{\ell_e}$ with  $\tau_w L_G^{\ell_e}$ at some vertex $w$.
	\end{itemize}
	\label{lemma:dehnexchange}
\end{prop}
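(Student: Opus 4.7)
My plan is to handle each of the three operations separately. The first two are essentially parallel-transport arguments near edges, while the third (the Dehn twist exchange) will require invoking the correspondence between Lagrangian surgery with a vanishing cycle and a Dehn twist.

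For the first operation, given a graph isotopy $G_t$ whose edge interiors avoid the critical values, I would construct the family of Lagrangian gloves $L_{G_t}^{\ell_e}$ building block by building block. Over each edge $e_t$, the fibration $W$ restricted to a tubular neighborhood of the path is symplectically trivial, so the suspension $L_{e_t}^\ell$ varies continuously in $t$ via symplectic parallel transport of the fixed fiber Lagrangian $\ell$ along the family of paths. Near vertices the isotopy can be arranged to fix a small disk, so the thimble or trace-of-surgery building block sitting there is unchanged; gluing edge and vertex families produces the global Lagrangian isotopy. The argument is essentially the same as the one used to show that Lagrangian cobordisms are invariant under the Hamiltonian isotopy class of the profile paths in \cite{biran2013fukayacategories,biran2015lagrangian}.

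For the second operation, the key point is that symplectic parallel transport along a path through a critical value $w$ is well-defined on any cycle $\ell$ disjoint from the vanishing cycle at $w$. Hence, if the label $\ell_e$ stays disjoint from the vanishing cycles during the isotopy, the family of suspensions $L_{e_t}^\ell$ extends continuously across the instant when $e_t$ crosses $w$, and we reduce to the setup of the first operation on the two subarcs into which $e$ is split by the crossing.

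The third operation is where the real content lies, and I expect it to be the main obstacle. Here I would invoke the surgery/Dehn-twist correspondence in the form used in \cite{mak2018dehn,abouzaid2018khovanov}. Concretely, near $v$ the building blocks fit together as follows: above $e_2$ lies the thimble $L^{\ell_2}_{e_2}$ emanating from the critical fiber $W^{-1}(w_2)$ whose vanishing cycle is $\ell_2$, and above $v$ lies the trace cobordism of the surgery $\ell_1\#\ell_2=\ell_3$. The local identification to prove is that the union of these two building blocks is Hamiltonian isotopic (rel boundary) to a single suspension building block along a path $f_{1,3}$ that begins parallel to $e_1$, loops once around $w_2$, and returns along $e_3$, carrying the label $\ell_1$. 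Since $\ell_2$ is the vanishing cycle and $\ell_1\cap\ell_2$ is a single transverse point, parallel transport of $\ell_1$ around $w_2$ is (Hamiltonian isotopic to) $\tau_{w_2}\ell_1\simeq\ell_1\#\ell_2=\ell_3$, so the ends of $f_{1,3}$ still match the rest of the graph. The technical hurdle is pinning down this local identification within our conventions on surgery necks and suspension profile data, and checking that the Hamiltonian isotopy can be localized so that it does not disturb the building blocks at the other vertices $w_1,w_3$; once this local replacement lemma is established the global isotopy between $L^{\ell_e}_G$ and $\tau_{w_2} L^{\ell_e}_G$ is obtained by gluing.
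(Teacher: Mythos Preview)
Your proposal is correct and follows the same approach as the paper: the first two operations are handled by straightforward parallel-transport arguments (the paper simply calls them ``clear''), and the third is exactly the surgery/Dehn-twist exchange established in \cite[Lemma A.25]{abouzaid2018khovanov}, which the paper cites directly. Your write-up is in fact more detailed than the paper's, which dispatches the entire proof in two lines.
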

\begin{proof}
	The first two types of modifications are clear. For the third kind of modification, see \cite[Lemma A.25]{abouzaid2018khovanov}.
\end{proof}

\subsubsection{Comparisons between tropical and Lefschetz: pants}
We now will provide a construction of a Lagrangian pair of pants in the setting of $(\CC^2\setminus \{z_1z_2=1\})$ from the perspective of the Lefschetz fibration considered in \cref{subsec:syzthimbles}:
\begin{align*}
	W: \CC^2\setminus \{z_1z_2=1\}\to       & \CC    \\
	(z_1, z_2)\mapsto & z_1z_2
\end{align*}
 See \cref{fig:lefschetzandtropical} for the correspondence between Lagrangian tori in the Lefschetz fibration and almost toric fibration. 

In this setting we build a Lagrangian glove.
We start with the Lagrangian $\ell=\RR\subset W^{-1}(-1+\epsilon)$.
For small $\epsilon<1$, we consider the loop $\gamma_\epsilon=\epsilon e^{ i \theta}-1$. The parallel transport of $\ell$ along this loop builds a Lagrangian $L_{\gamma_\epsilon}^\ell$.
The Lagrangian $L_{\gamma_\epsilon}^\ell$ only pairs against tori $F_{\epsilon, s}$, so its support in the almost toric fibration will be a line. See the blue Lagrangian as drawn in \cref{fig:glovepants}.

By exchanging a Dehn twist for an additional vertex in the glove (\cref{lemma:dehnexchange}), we can build a new Lagrangian $\tau_0L_{\gamma_\epsilon}^\ell$ (drawn in red in \cref{fig:glovepants}).
This description provides us with another construction of the Lagrangian pair of pants.
\begin{figure}
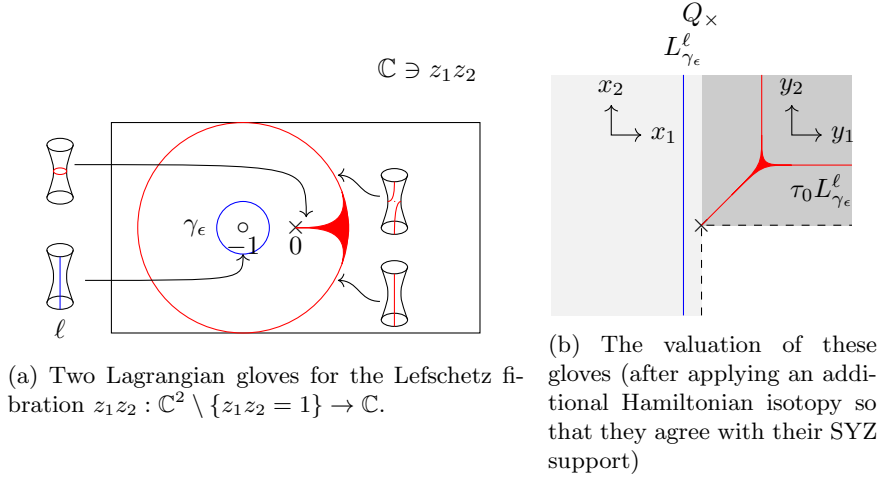

	\centering
	\begin{subfigure}{.55\linewidth}
		\centering
		\input{figures/glovelefschetz.tikz}
		\caption{Two Lagrangian gloves for the Lefschetz fibration $z_1z_2: \CC^2\setminus\{z_1z_2=1\}\to \CC$. }
	\end{subfigure}\;\;
	\begin{subfigure}[scale=.7]{.35\linewidth}
		\centering
		\input{figures/glovelefschetz2.tikz}
		\caption{The valuation of these gloves (after applying an additional Hamiltonian isotopy so that they agree with their SYZ support) }
	\end{subfigure}
		\caption{Comparing Lefschetz and tropical pictures at nodal fibers.}
	\label{fig:glovepants}
\end{figure}
These local models are compatible with the discussion from \cref{sec:almosttoric}. 
Let $Q_{\times}$ be the integral tropical manifold which is the base of $X=\CC^2\setminus\{z_1z_2=1\}$. 	
$Q_{\times}$ can be covered with two affine charts.
Call the charts 
\begin{align*}
Q_0=&\{(x_1, x_2)\}\setminus\{(x, x)\;|\; x>0\}\\
Q_1=&\{(y_1, y_2)\}\setminus \{(y,y)\;|\; y<0\}.
\end{align*}
The charts are glued with the change of coordinates 
\[
	(y_1, y_2)=\left\{\begin{array}{cc}
	(x_1, x_2 )& x_2> x_1\\
	(2x_1-x_2, x_1)& x_2<x_1
	\end{array}
	\right.
\]
We now consider two tropical curves inside of $Q_{\times}$. 
The first is an affine line, which is given by the critical locus of a tropical polynomial defined over the $Q_0$ chart 
\[
	\phi_0(x_1, x_2)=1\oplus x_1.
\]
The second tropical curve we consider is a pair of pants with a capping thimble (as described in \cref{sec:almosttoric},) given by the critical locus of a tropical polynomial defined over the $Q_1$ chart, 
\[
	\phi_1(y_1, y_2)=y_1\oplus y_2\oplus 1.
\]
From \cref{lemma:dehnexchange}, we get the following corollary:
\begin{corollary}[Nodal Trade for Tropical Lagrangians]
	Consider the tropical curves $V(\phi_0)$ and $V(\phi_1)$ inside of $Q_\times$. 
	The Lagrangians $L(\phi_0)$ and $L(\phi_1)$ are Lagrangian isotopic in $\CC^2\setminus \{z_1z_2=1\}$. 
	\label{cor:tropicalexchange}
\end{corollary}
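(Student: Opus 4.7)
The plan is to realize both $L(\phi_0)$ and $L(\phi_1)$ as Lagrangian gloves for the Lefschetz fibration $W=z_1z_2:\CC^2\setminus\{z_1z_2=1\}\to\CC$ and then apply the thimble-exchange operation of \cref{lemma:dehnexchange} at the critical value $0$. The two gloves that appear are exactly the ones built in the discussion before \cref{fig:glovepants}: the parallel-transport glove $L^\ell_{\gamma_\epsilon}$ over the small loop $\gamma_\epsilon=-1+\epsilon e^{i\theta}$ with label the real line $\ell\subset W^{-1}(-1+\epsilon)$, and its image $\tau_0 L^\ell_{\gamma_\epsilon}$ under the exchange operation.

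First I would match $L(\phi_0)$ with $L^\ell_{\gamma_\epsilon}$. Because $V(\phi_0)$ is an affine line contained in $Q_0$ and disjoint from the node and its eigenray, the smooth part of \cref{cor:almosttoriclift} puts $L(\phi_0)$ in the Hamiltonian isotopy class of the periodized conormal to $V(\phi_0)$. Under the identification of the SYZ tori with the level sets $F_{r,s}=\{|z_1z_2-1|=r,\ \mu=s\}$ used in the proof of \cref{claim:tropicalthimbles}, this periodized conormal is the union of the parallel transports of $\ell$ along $\gamma_\epsilon$, which is the glove $L^\ell_{\gamma_\epsilon}$.

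Next I would match $L(\phi_1)$ with $\tau_0 L^\ell_{\gamma_\epsilon}$. The tropical curve $V(\phi_1)$ is a tripod with one leg running into the node along the eigenray. Away from the node, the two non-eigenray legs lift via \cref{cor:almosttoriclift} to parallel transports of $\ell$ along arcs emanating from a common regular value of $W$; along the eigenray, \cref{claim:tropicalthimbles} provides the Lagrangian thimble that caps the vanishing cycle; and at the central vertex the surgery prescribed by \cref{def:dimerlagrangian} is exactly the trace-of-surgery building block, with edge-label surgery relation $\ell\#\ell=\ell$ inherited from the balancing of the tropical vertex. Assembling these pieces reproduces $\tau_0 L^\ell_{\gamma_\epsilon}$. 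With both matchings established, \cref{lemma:dehnexchange} produces a Lagrangian isotopy from $L^\ell_{\gamma_\epsilon}$ to $\tau_0 L^\ell_{\gamma_\epsilon}$, and hence between $L(\phi_0)$ and $L(\phi_1)$.

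The main obstacle is in the matching step rather than in the exchange itself: one must check that the tropical Lagrangians — built abstractly from sections glued by the surgery procedure of \cref{sec:dimer} and capped off by thimbles via \cref{cor:almosttoriclift} — actually coincide up to Lagrangian isotopy with the gloves constructed directly from the Lefschetz fibration. This is a purely local question near the node, and since the statement only demands Lagrangian (not Hamiltonian) isotopy, the flux ambiguities encoded by the surgery profile and the smoothing parameter $\rho$ are harmless, as shown in the flexibility statement of \cref{lemma:weightchoices}.
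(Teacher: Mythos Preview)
Your proposal is correct and matches the paper's approach exactly: the paper derives the corollary directly from \cref{lemma:dehnexchange} after identifying $L(\phi_0)$ with the parallel-transport glove $L^\ell_{\gamma_\epsilon}$ and $L(\phi_1)$ with $\tau_0 L^\ell_{\gamma_\epsilon}$, just as you outline. One small correction: at the trivalent vertex the edge labels are not $\ell,\ell,\ell$ with relation $\ell\#\ell=\ell$; rather (as in \cref{fig:glovepants}) the thimble edge carries the vanishing cycle and the two outgoing legs carry $\ell$ and its Dehn twist, so the relevant surgery relation is between $\ell$, the vanishing cycle, and $\tau(\ell)$.
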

This corollary allows us to manipulate tropical Lagrangians by  manipulating the tropical diagrams in the affine tropical manifold instead. 

\begin{example}
	Consider the Lefschetz fibration with fiber $\C^*$ given by the smoothed $A_n$ singularity  as in \cref{fig:ansingularity}. 
	We construct the Lagrangian glove where we parallel transport the real arc $\ell=\RR\subset \CC^*$ around the loop of the glove. 
	The monodromy of the symplectic connection from travelling around the large circle corresponds to $n$ twists of the same vanishing cycles. By attaching $n$ vanishing cycles to this arc, we get a Lagrangian glove. In the ``moment map'' picture, all of the singularities lie on the same eigenray, and we get the tropical Lagrangian which is a $n+2$ punctured sphere with $n$ of the punctures filled in with thimbles. Though it appears that the $n$ thimbles of the Lagrangian coincide with each other in the ``moment map'' picture, they differ by some amount of phase in the fiber direction, which is easily seen in the Lefschetz fibration.
\begin{figure}
	\centering
	\input{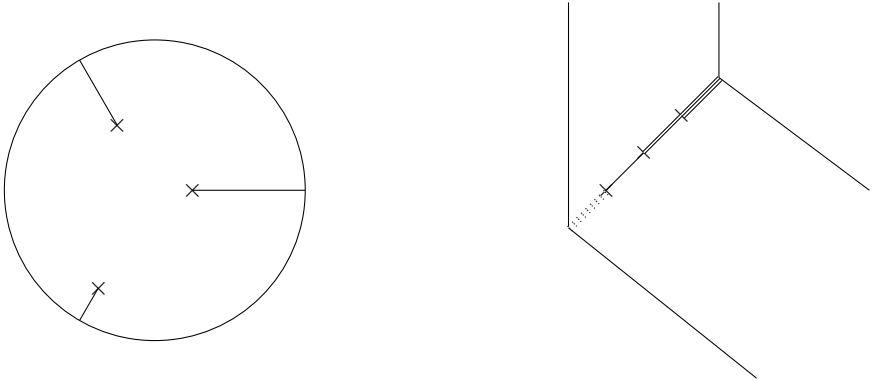}
	\caption{
		The resolved $A_3$ singularity, a Lagrangian glove, and its associated tropical curve.
	}
	\label{fig:ansingularity}
\end{figure}
\end{example}

\begin{corollary}
	The Lagrangians from \cref{exam:innertropical,exam:outertropical} are Lagrangian isotopic. 
	\label{cor:innerouterisotopic}
\end{corollary}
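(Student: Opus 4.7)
The plan is to apply the Nodal Trade for Tropical Lagrangians (\cref{cor:tropicalexchange}) in a neighborhood of each of the four nodes of $Q_\Sigma$. In \cref{exam:outertropical}, the tropical curve $V_{\text{outer}}$ is a closed polygonal loop parallel to $\partial Q_\Sigma$ that passes by each node $q_i$ without meeting it; in a small affine chart around $q_i$, the restriction of $V_{\text{outer}}$ agrees with the local curve $V(1\oplus x_1)$ from \cref{cor:tropicalexchange}, oriented so that the line is parallel to the edge of $\partial Q_\Sigma$ incident to $q_i$. In \cref{exam:innertropical}, the tropical curve $V_{\text{inner}}$ is obtained by inserting, at each node, a trivalent vertex whose third leg runs along the eigenray $v^\lambda_i$ into $q_i$ --- exactly the local model $V(y_1 \oplus y_2 \oplus 1)$ of \cref{cor:tropicalexchange}.

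First, I would pick small disjoint standard neighborhoods $B_i \subset Q_\Sigma$ around each node $q_i$, identified via affine and symplectic charts with the local model of \cref{claim:tropicalthimbles}. Inside each $\val^{-1}(B_i)$ the Lagrangian $L(V_{\text{outer}})$ restricts to $L(\phi_0)$ and the Lagrangian $L(V_{\text{inner}})$ restricts to $L(\phi_1)$. By \cref{cor:tropicalexchange}, there is a Lagrangian isotopy between these two local Lagrangians. I would arrange (using \cref{claim:codimension0matches}) that both ends of this local isotopy agree with the periodized conormal bundle of the common tropical segment $V_{\text{outer}}\cap \partial B_i = V_{\text{inner}}\cap \partial B_i$ in a collar neighborhood of $\partial B_i$, and then cut off the isotopy to make it supported in the interior of $B_i$, extending by the identity outside.

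Performing this cutoff nodal-trade isotopy simultaneously at all four nodes produces a Lagrangian isotopic to $L(V_{\text{outer}})$ which agrees with $L(V_{\text{inner}})$ inside every $\val^{-1}(B_i)$. The remaining discrepancy is supported in $Q_\Sigma \setminus \bigcup_i B_i$, where both Lagrangians are $C^1$-close periodized conormals of smooth tropical curves that avoid the discriminant locus. These two curves are related by an obvious smooth isotopy of tropical sections in $Q_\Sigma \setminus \bigcup_i B_i$ --- the outer rectangular loop sliding inward along the collar of $\partial Q_\Sigma$ to meet the pair-of-pants legs --- and \cref{cor:almosttoriclift} together with \cref{claim:codimension0matches} lifts this deformation of tropical sections to a Lagrangian isotopy. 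Composing with the nodal-trade isotopies completes the proof.

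The main obstacle will be verifying that the local nodal-trade isotopy of \cref{cor:tropicalexchange} can be cut off cleanly at $\partial B_i$: although the SYZ supports of the two local Lagrangians match outside a compact set, there is in principle a phase discrepancy in the torus fiber direction. This is resolved by the collared boundary property from \cref{prop:generalizedsurgeryprofile}, which allows the surgery-neck profile data defining $L(\phi_0)$ and $L(\phi_1)$ to be tuned so that the Lagrangians literally coincide in a collar of $\partial B_i$, reducing the gluing of the isotopies to a routine partition of unity argument.
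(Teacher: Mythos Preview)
Your approach is correct and is exactly the argument the paper intends: \cref{cor:innerouterisotopic} is stated without proof in the paper, as an immediate consequence of the nodal trade \cref{cor:tropicalexchange} applied at each node, together with the fact that away from the nodes the two tropical curves are isotopic through tropical curves avoiding the discriminant locus. Your elaboration of the cutoff/gluing near $\partial B_i$ via \cref{claim:codimension0matches} and the collared-boundary structure is the right way to make this rigorous. One small remark: you write ``four nodes,'' matching the particular figure, but \cref{exam:innertropical,exam:outertropical} are stated for an arbitrary toric surface $X_\Sigma$, so the argument should be phrased for each corner of the Delzant polytope rather than for four.
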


\section{Lagrangian tori in toric del-Pezzos}
\label{sec:tdp}
We now introduce a monotone Lagrangian torus which exists in a toric del-Pezzo. 
We show that in the setting of $\CP^2$ this Lagrangian $L_{T^2}$ is isotopic to $F_q$, a fiber of the moment map. 
Finally, we speculate on homological mirror symmetry for $L_{T^2}\subset \CP^2\setminus E$, where this Lagrangian is no longer isotopic to $F_q$. 
We exhibit a symplectomorphism $g: \CP^2\setminus E\to \CP^2\setminus E$ expected to be mirror to fiberwise Fourier-Mukai transform on the mirror. 
\subsection{Examples from toric del-Pezzos}
\label{subsub:examplesfromdelpezzos}
Monotone Lagrangian tori and Lagrangian seeds in del-Pezzo surfaces have been studied in \cite{vianna2017infinitely,pascaleff2020wall}.
Let $X_\Sigma$ be a toric del-Pezzo.
A choice of monotone symplectic structure on $X_\Sigma$ gives a monotone Lagrangian torus $F_\Sigma$ at the barycenter of the moment polytope has a Lagrangian seed structure $\{D_{i,\Sigma}\}$ given by the Lagrangian thimbles extending from the corners of the moment polytope. The Lagrangian thimbles and corresponding dimers are drawn in   \crefrange{fig:fano1}{fig:fano5}.
\begin{figure}
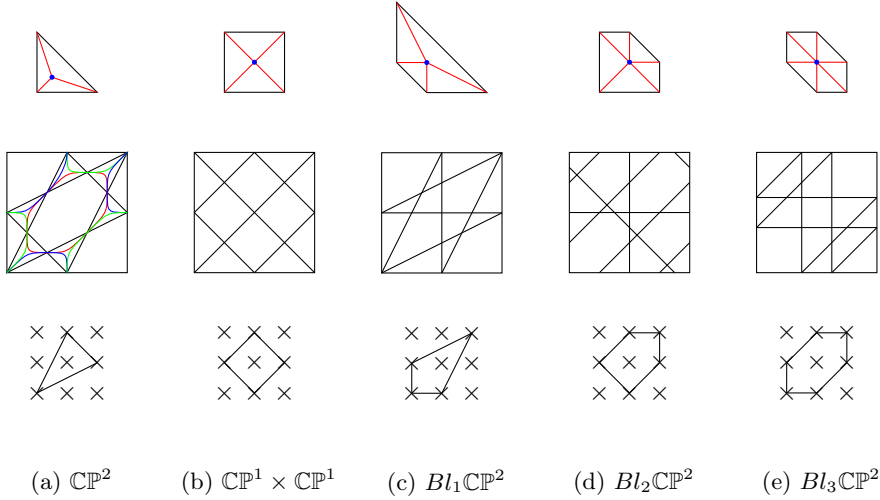

	\begin{subfigure}{.19\linewidth}
		\centering
		\input{figures/fano1.tikz}
		\caption{$\CP^2$}
		\label{fig:fano1}
	\end{subfigure}
	\begin{subfigure}{.19\linewidth}
		\centering
		\input{figures/fano2.tikz}
		\caption{$\CP^1\times \CP^1$}
		\label{fig:fano2}
	\end{subfigure}
	\begin{subfigure}{.19\linewidth}
		\centering
		\input{figures/fano3.tikz}
		\caption{$Bl_1\CP^2$}
		\label{fig:fano3}
	\end{subfigure}
	\begin{subfigure}{.19\linewidth}
		\centering
		\input{figures/fano4.tikz}
		\caption{$Bl_2\CP^2$}
		\label{fig:fano4}
	\end{subfigure}
	\begin{subfigure}{.19\linewidth}
		\centering
		\input{figures/fano5.tikz}
		\caption{$Bl_3\CP^2$}
		\label{fig:fano5}
	\end{subfigure}
	\caption[Toric del Pezzos, Seeds, and dimer Lagrangians]{\textbf{Top}: Lagrangian seeds in toric del Pezzo surfaces. The antisurgery disks are drawn in red. \textbf{Middle}: The corresponding dual dimer models associated the Lagrangian seeds. In the first example of $\CP^2$, we additionally draw the classes of the cycles $\partial D_{f_{i, \Sigma}}\subset F^*_\Sigma$.  \textbf{Bottom:} Cycle classes of the zigzag diagram, corresponding to mutation directions. }
	
	\label{fig:fanotori}
\end{figure}
In these 5 examples, the dimer Lagrangian $F_\Sigma^*$ constructed from the data of $(F_\Sigma, D_{i, \Sigma})$ again has the topology of a torus.
This can be checked from the computation of the Euler characteristic of the dual Lagrangian, 
\[
	\chi(L^*)=|V(G)|-|E(G)|+|\Sigma|,
\]
where $|\Sigma|$ is the number of antisurgery disks of $F_\Sigma$. 

One method of distinguishing Lagrangians is to compute their open Gromov-Witten potentials.
In the case of toric Fanos, it was proven in \cite{tonkonog2018string}  that all Lagrangian tori have the potentials given by one of those in \cite{vianna2017infinitely}. 
A computation shows that the Lagrangians $F_\Sigma$ and $F_\Sigma^*$ have the same mutation configuration. 
We will use this information to later match their Landau-Ginzburg potentials.
\begin{claim}
	Let $X_\Sigma$ be a toric Fano, $F_\Sigma$ the standard monotone Clifford torus in $X_\Sigma$, and $F_\Sigma^*$ be the dual torus constructed using the  Lagrangian seed structure on $F_\Sigma$.
	There is a set of coordinates for $H_1(F_\Sigma^*)$ and $H_1(F_\Sigma)$ so that the mutation directions determined by their Lagrangian seed structures are the same.
	\label{claim:mutationdirections}
\end{claim}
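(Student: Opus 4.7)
The plan is to exhibit both collections of mutation directions as the set of primitive rays $\{v_i\}$ of the fan $\Sigma$ under a common identification of the two first homologies with $\Z^n$. For $F_\Sigma$, the antisurgery disk $D_{i,\Sigma}$ is the Lagrangian thimble collapsing the circle normal to the $i$-th facet of the moment polytope, so under the toric identification $H_1(F_\Sigma;\Z) \cong \Z^n$ one has $[\partial D_{i,\Sigma}] = v_i$ directly. The task is to produce an isomorphism $H_1(F_\Sigma^*;\Z) \cong \Z^n$ under which the collection $\{[\partial D_f]\}$, indexed by faces $f$ of the dimer graph $G$, also coincides with $\{v_i\}$.

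First I would observe that by the construction of \ref{subsub:examplesfromdelpezzos}, the dimer used to build $F_\Sigma^*$ is chosen so that its zigzag configuration $\Sigma_{\text{dim}}$ consists of affine cycles in $F_\Sigma \cong T^n$ representing exactly the mutation directions $\{v_i\}$; this is possible precisely because the $v_i$ form a balanced set, which follows from the completeness of the fan $\Sigma$. Then $F_\Sigma^*$ is obtained from the dimer Lagrangian $L(\{\phi^\opm_v\}) \subset B^*_\epsilon F_\Sigma$ by attaching the antisurgery disks $D_{i,\Sigma}$ along their conormal Legendrians in $S^*_\epsilon F_\Sigma$. A Mayer--Vietoris calculation for this decomposition shows that $H_1(F_\Sigma^*;\Z) \cong \Z^n$, in a manner naturally compatible with the argument projection $\arg : F_\Sigma^* \to T^n = F_\Sigma$, giving the desired identification with the $\Z^n$ used for $F_\Sigma$.

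Next I would compute the classes $[\partial D_f]$. By \ref{lemma:dimerantisurgery}, the antisurgery disks of $F_\Sigma^*$ are indexed by the faces $f$ of $G$, with $\partial D_f \subset F_\Sigma^*$ a lift of the face boundary $\partial f \subset T^n$. Although $\partial f$ is null-homologous in $T^n$ (the face being contractible), its lift $\partial D_f$ alternates between the dimer Lagrangian $L(\{\phi^\opm_v\})$ and the attached disks $D_{i,\Sigma}$; under the Mayer--Vietoris identification, $[\partial D_f]$ is therefore a signed sum of the zigzag classes $\{v_i\}$ counted according to how $\partial f$ traverses the cycles of $\Sigma_{\text{dim}}$. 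A direct inspection of each of the five dimers in Figures \ref{fig:fano1}--\ref{fig:fano5} shows that this sum collapses to a single primitive vector, and that as $f$ ranges over the faces of $G$ one recovers precisely the set $\{v_i\}$ (each ray with the appropriate sign, corresponding to the corner of the moment polytope dual to $f$).

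The main obstacle is the bookkeeping in the previous step: correctly tracking $[\partial D_f]$ as a class in $H_1(F_\Sigma^*)$ rather than in the ambient $H_1(T^n)$, since the attaching construction adds enough handles that the cycle is no longer null-homologous. Once the Mayer--Vietoris identification and its compatibility with $\arg$ are set up, the final matching reduces to the finite, explicit combinatorial check above, which can be read off directly from the five dimer pictures.
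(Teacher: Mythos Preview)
Your proposal and the paper's proof coincide at the decisive step: both reduce the claim to an explicit, case-by-case computation of the boundary classes $[\partial D_f]\in H_1(F_\Sigma^*)$ for each of the five toric del Pezzos and a comparison against the corresponding $[\partial D_{i,\Sigma}]\in H_1(F_\Sigma)$. The paper's own argument is in fact a single sentence (``this is done by an explicit computation of the homology classes of the disk boundaries in $F_\Sigma^*$''), so your final inspection of the five dimers is exactly what is being asked.

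The scaffolding you add on top of this---the Mayer--Vietoris identification and the description of $[\partial D_f]$ as a signed sum of zigzag classes---is more than the paper attempts, and one piece of it is shaky. The argument projection $\arg:F_\Sigma^*\to T^n$ is only defined on the dimer-Lagrangian portion of $F_\Sigma^*$ lying in $B^*_\epsilon F_\Sigma$; the attached thimbles $D_{i,\Sigma}$ leave this neighborhood, so ``compatible with $\arg$'' does not by itself pin down an isomorphism $H_1(F_\Sigma^*)\cong H_1(F_\Sigma)$. Relatedly, the boundary $\partial D_f=\gamma_c$ lies entirely inside the dimer Lagrangian (it is the lift of $\partial f$ constructed in \cref{lemma:disks}), not alternating with the $D_{i,\Sigma}$ as you describe. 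None of this obstructs the proof, since for the explicit check one simply picks \emph{any} basis of $H_1(F_\Sigma^*)\cong\Z^2$ for each of the five cases, reads off the $[\partial D_f]$ in that basis, and observes that the resulting set of vectors is $GL_2(\Z)$-equivalent to the set of $[\partial D_{i,\Sigma}]$; but if you want to keep the conceptual framing, the identification of first homologies should be justified differently (e.g.\ via the Lagrangian isotopy between $F_\Sigma^*$ and a moment fiber, or by tracking a basis through the surgery construction directly).
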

\begin{proof}
	This is done by an explicit computation of the homology classes of the disk boundaries in $F_\Sigma^*$.
\end{proof}

As a corollary, the wall and chamber structure on the moduli space of Lagrangians $F_\Sigma$ obtained by mutations may be replicated in a similar fashion on the moduli space of the Lagrangians $F_\Sigma^*$.
\begin{corollary}
	In the setting of toric Fanos, the Landau-Ginzburg potential of $F_\Sigma$ is the same as $F_\Sigma^*$. \label{cor:superpotential}
	\label{subsub:torusmutation}
\end{corollary}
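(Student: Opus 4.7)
The plan is to combine the matching of mutation directions from the preceding claim with Tonkonog's classification of potentials of monotone Lagrangian tori in toric Fanos. Recall that the Landau-Ginzburg potential $W_L$ of a monotone Lagrangian torus $L\subset X_\Sigma$ is a Laurent polynomial on $H_1(L;\CC^*)$ encoding the count of Maslov-$2$ holomorphic disks bounded by $L$, weighted by the homology classes of their boundaries. For the standard Clifford torus $F_\Sigma$ the potential is the toric superpotential $W_\Sigma = \sum_\rho z^{v_\rho}$, where $v_\rho$ runs over the primitive generators of the rays of $\Sigma$. Crucially, each $v_\rho$ equals the boundary class $[\partial D_{\rho,\Sigma}]\in H_1(F_\Sigma;\ZZ)$ of the Lagrangian thimble lying over the corresponding toric divisor, so the potential of $F_\Sigma$ is already packaged in terms of its Lagrangian seed data.

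I would next reduce the computation of $W_{F_\Sigma^*}$ to locating the chart of the cluster atlas on the moduli space of monotone tori in $X_\Sigma$ that it occupies. By \cite{tonkonog2018string}, $W_{F_\Sigma^*}$ must agree with one of the Vianna potentials, and by \cite{pascaleff2020wall} mutating a Lagrangian across a seed disk $D_i$ transforms its potential by a wall-crossing operation determined purely by the class $[\partial D_i]\in H_1$. The mutation configuration $\{[\partial D_i]\}$ therefore picks out a vertex of the cluster tree governing the Vianna list, and the potential is determined by this vertex. The preceding claim supplies an identification $H_1(F_\Sigma^*;\ZZ)\cong H_1(F_\Sigma;\ZZ)$ sending the mutation directions of $F_\Sigma^*$ to those of $F_\Sigma$, so under this identification both tori sit at the same vertex of the cluster tree with the same outgoing edges.

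The main obstacle is to rigorously pass from matching the anti-surgery disk classes $\{[\partial D_i]\}$ to matching the full collection of Maslov-$2$ disk classes appearing in the potential. I would handle this by combining finiteness of the Vianna list with the Pascaleff-Tonkonog wall-crossing formula: performing a single mutation of $F_\Sigma^*$ across each $D_{f_i,\Sigma}$ produces a new potential via a transformation determined only by $[\partial D_{f_i,\Sigma}]$, and by the preceding claim this is precisely the transformation governing the corresponding mutation of $W_\Sigma$. Because the Vianna classification admits only finitely many potentials and the mutation tree is rigid (any vertex is determined by the transformations along its outgoing edges together with the neighboring potentials), agreement of all first-level mutations forces agreement of the root potentials. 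This yields $W_{F_\Sigma^*} = W_\Sigma$, as desired.
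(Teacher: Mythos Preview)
Your proposal takes essentially the same approach as the paper: both derive the corollary from Tonkonog's classification of potentials of monotone tori in toric Fanos combined with the matching of mutation directions established in the preceding claim, so that the wall-and-chamber structure (and hence the position in the Vianna list) coincides for $F_\Sigma$ and $F_\Sigma^*$. One small correction: the Vianna list of potentials is \emph{not} finite in general (for $\CP^2$ it is indexed by Markov triples), but your argument does not actually depend on finiteness --- the point you need, and the one the paper relies on, is that the mutation configuration $\{[\partial D_i]\}$ determines the vertex of the cluster tree and hence the potential.
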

In both \cref{fig:fano1,fig:fano2} we may mutate the diagram to give us a dimer model with two polygons, which is the balanced tropical Lagrangian for some tropical polynomial.
As a result, the Lagrangians \cref{fig:fano1,fig:fano2} are Lagrangian isotopic to tropical Lagrangians constructed in \cref{sec:almosttoric}.
In the case of \cref{fig:fano1}, the dimer Lagrangian  $F^*_{\Sigma_{\CP^2}}$ and Lagrangian and $L_{inner}$ from \cref{fig:tpIII} are related by Lagrangian mutation and isotopy; the local model of mutation is drawn in \cref{fig:toricdimermutation}.
This is perhaps easier to see geometrically by working in the reverse direction. First apply isotopy by contracting the tropical genus of $L_{inner}$ (\cref{fig:smoothtorus}).
The underlying tropical curve becomes nonsmooth, and this isotopy sweeps out some symplectic flux while collapsing a cycle of $L_{inner}$.
The resulting Lagrangian is an immersed Lagrangian (whose model at the vertex is  \cref{fig:mutatedhexagon}). 
Surgering this immersed point in the other direction recovers $F^*_{\Sigma_{\CP^2}}$ (\cref{fig:mutationhexagons}).

\begin{remark}
	 $L_{inner}$ is not the mutation of $F^*_{\Sigma_{\CP^2}}$, as mutation is the composition of antisurgery and surgery with equal neck sizes.
	 $L_{inner}$ is obtained from $F^*_{\Sigma_{\CP^2}}$ by antisurgery followed by surgery, but with some symplectic flux swept out due to non-equal neck sizes chosen. 
	In particular, $F^*_{\Sigma_{\CP^2}}$ is monotone, as is $\mu_D F^*_{\Sigma_{\CP^2}}$. However,  $L_{inner}$ is not. 
	\label{rem:linnermonotone}
\end{remark}

It is unclear how much of this story extends beyond the toric case.
\begin{question}
	Is there a relation between  $(L, D_i)$ and $(L^*, D^*_f)$ that can be stated in the language of mirror symmetry?
\end{question}

We conclude our discussion with a collection of observations for mirror symmetry of $\CP^2\setminus E$ and the elliptic surface $\check X_{9111}$.
Here, $\check X_{9111}$ is the extremal elliptic surface in the notation of \cite{miranda1989basic}. 
This elliptic surface has a Lefschetz fibration $W_{9111}:\check X_{9111}\to \CP^1$ with 3 singular fibers of type $I_1$, and one singular fiber of type $I_9$.
An $I_k$ fiber is the degenerate elliptic fiber which is a $k$-chain of $\CP^1$s. 
We can present this elliptic surface \cite[Table Two]{artebani2016cox} as the blowup of a pencil of cubics on $\CP^2$,
\[
	(z_1^2z_2+z_2^2z_3+z_3^2z_1)+t\cdot(z_1z_2z_3)=0.
\]
From this pencil, we get a map $\check \pi_{bl}: \check X_{9111}\to \CP^2$, which has nine exceptional divisors. 
Three of the exceptional divisors correspond to the base points of the pencil giving us three sections of the fibration $\check W_{9111}:\check X_{9111}\to \CP^1$. 
We study homological mirror symmetry with the $A$-model on $\CP^2$.
Of principle interest will be the Lagrangian discussed in \cref{fig:tpIII,}, which we will call $L_{inner}\subset \CP^2$. The Lagrangian discussed in \cref{fig:tpII} will be called $L_{outer}\subset \CP^2$.

In \cref{subsec:comparingtori}, we use methods from \cref{subsec:tropicalpantslefschetz} to compare the Lagrangian $L_{T^2}$ to a fiber $F_q\subset \CP^2$ of the moment map. Finally, we make a homological mirror symmetry statement for $L_{T^2}$ and the fibers of the elliptic surface $\check X_{9111}$ in \cref{subsec:amodeloncp2}.

\subsection{Tropical Lagrangian Tori in \texorpdfstring{$\CP^2$.}{CP2}}
\label{subsec:comparingtori}
We now apply the tools from Lefschetz fibrations to give us a better understanding of the tropical Lagrangians in $\CP^2$ from \cref{fig:fano1}.
\begin{prop}
	The Lagrangian $L_{inner}$ drawn in  \cref{fig:fano1} is Lagrangian isotopic to the moment map fiber $F_p$ of $\CP^2$.
	\label{prop:tropicalischekanov}
\end{prop}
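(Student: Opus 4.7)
The plan is to establish the Lagrangian isotopy $L_{inner} \simeq F_p$ by going through the intermediate monotone tropical Lagrangian $F^*_{\Sigma_{\CP^2}}$, the dual dimer Lagrangian associated to the Lagrangian seed structure on the Clifford torus $F_{\Sigma_{\CP^2}}$ depicted in \cref{fig:fano1}.

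\emph{Step 1: $L_{inner}$ is Lagrangian isotopic to $F^*_{\Sigma_{\CP^2}}$.} This is the geometric content underlying \cref{rem:linnermonotone} and \cref{fig:toricdimermutation}. Applying a Lagrangian antisurgery to $F^*_{\Sigma_{\CP^2}}$ along the central hexagonal disk of the dimer produces the immersed Lagrangian of \cref{fig:mutatedhexagon}. Resolving the single self-intersection of this immersed Lagrangian by a Lagrangian surgery in the opposite direction, with a neck size larger than the antisurgery width so that some symplectic flux is swept out, produces $L_{inner}$ modeled locally on \cref{fig:mutationhexagons}. Because both antisurgery and surgery extend to continuous families of Lagrangians parametrized by the neck size, composing them yields a genuine Lagrangian isotopy (with nonzero flux, which explains why $L_{inner}$ fails to be monotone even though $F^*_{\Sigma_{\CP^2}}$ is).

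\emph{Step 2: $F^*_{\Sigma_{\CP^2}}$ is Lagrangian isotopic to $F_{\Sigma_{\CP^2}}=F_p$.} Both are monotone Lagrangian tori in $\CP^2$. By \cref{claim:mutationdirections} their Lagrangian seed structures have matching mutation directions, and by \cref{cor:superpotential} their Landau--Ginzburg superpotentials agree. Both tori therefore correspond to the Markov triple $(1,1,1)$, so by the classification of monotone Lagrangian tori in $\CP^2$ of Vianna and Tonkonog, they are Hamiltonian isotopic. In particular they are Lagrangian isotopic, and composing with Step 1 yields the proposition.

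The main obstacle lies in Step 1: although the sequence antisurgery-then-surgery with unequal necks manifestly connects $F^*_{\Sigma_{\CP^2}}$ and $L_{inner}$ as Lagrangians, one must verify that the one-parameter family interpolating through the immersed intermediate of \cref{fig:mutatedhexagon} is genuinely a smooth Lagrangian isotopy rather than a mere surgery relation. A more self-contained alternative, avoiding the appeal to the Vianna--Tonkonog classification, would be to build an explicit isotopy from $L_{inner}$ to $F_p$ using the nodal-trade \cref{cor:tropicalexchange} combined with the Lagrangian glove / Dehn twist exchange machinery of \cref{lemma:dehnexchange}: one would describe $L_{inner}$ and $F_p$ as Lagrangian gloves for a Lefschetz fibration on $\CP^2$ and trade thimbles for loops around critical values until the two glove descriptions match. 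The combinatorics of this explicit isotopy is the primary technical cost of bypassing the classification theorem.
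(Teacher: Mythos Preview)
Your two-step outline contains genuine gaps in both steps.

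\textbf{Step 1 is not an isotopy.} Antisurgery followed by surgery with the opposite neck does \emph{not} give a Lagrangian isotopy of embedded Lagrangians: the intermediate Lagrangian of \cref{fig:mutatedhexagon} is immersed, and a path of Lagrangians passing through an immersed one is by definition not an isotopy. In the paper's own diagram this relation is recorded as ``mutation'' between $F^*_{\Sigma_{\CP^2}}=L_{T^2}$ and $\mu_{D_f}L_{T^2}$, followed by a genuine Lagrangian isotopy from $\mu_{D_f}L_{T^2}$ to $L_{inner}$; the first arrow is exactly what you are trying to promote to an isotopy, and that promotion is unjustified. (Indeed, if antisurgery-then-surgery were always an isotopy, the Chekanov and Clifford tori in $\CC^2$ would be Lagrangian isotopic.)

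\textbf{Step 2 invokes a conjecture, not a theorem.} The result you cite from \cite{tonkonog2018string} classifies the \emph{potentials} of monotone tori in toric Fanos, not the tori themselves. That two monotone tori have the same Markov triple and superpotential does not, at present, imply they are Hamiltonian isotopic; the paper says exactly this just before the proof, noting that the desired conclusion would follow \emph{if} the Vianna classification conjecture held. Your Step 2 therefore assumes what the proposition is meant to supply evidence for.

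The alternative you sketch at the end is in fact the paper's actual route, but with the key idea missing: one passes from $L_{inner}$ to $L_{outer}$ via \cref{cor:innerouterisotopic}, then realizes both $L_{outer}$ and $F_p$ as parallel-transport tori $T_{\epsilon,\ell_2}$ and $T_{\epsilon,\ell_1}$ in the \emph{Hesse pencil} Lefschetz fibration on $\CP^2$. The crucial extra ingredient is an explicit automorphism $g\in\mathrm{PGL}(3,\CC)$ of the Hesse configuration that swaps the vanishing cycles $\ell_1\leftrightarrow\ell_2$ while fixing the elliptic fiber $E_{12}$; since $\mathrm{PGL}(3,\CC)$ is connected and $H^1(\CP^2)=0$, $g$ is Hamiltonian isotopic to the identity, giving $L_{outer}\sim F_p$ directly. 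No Dehn-twist combinatorics or classification input is needed once one has this pencil symmetry.
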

This relation is already somewhat expected. 
\cite{vianna2014infinitely} provides an infinite collection of monotone Lagrangian tori which are constructed by mutating the product monotone tori along different mutation disks. It is conjectured that these are all of the monotone tori in $\CP^2$. 
From \cref{subsub:torusmutation} we know that the Lagrangian $L_{T^2}$ has the same Lagrangian mutation seed structure as $T^2_{prod, mon}$, so if this conjecture on the classification of Lagrangian tori in $\CP^2$ holds, these two tori must be Hamiltonian isotopic.
\begin{proof}
	The outline is as follows: we first use the isotopy provided by \cref{cor:innerouterisotopic} between $L_{inner}$ and $L_{outer}$.
	We then compare the Lagrangians $L_{outer}$ to a Lagrangian glove for a Lefschetz fibration.
	This Lefschetz fibration is constructed  from a pencil of elliptic curves chosen for a large amount of symmetry. 
	Finally, we compare $F_p$ to the Lagrangian constructed via a Lefschetz fibration.
	 The Lagrangians $F_p$ and $L_{outer}$ are matched via an automorphism of the pencil of elliptic curves.

	We first will talk about the geometry of the pencil and the automorphism we consider. The Hesse pencil of elliptic curves is the one parameter family described by
	\[
		 (z_1^3+z_2^3+z_3^3)+t \cdot (z_1z_2z_3)=0
	\]
	which has four degenerate $I_3$ fibers at symmetric points $p_1, p_2, p_3, p_4\in  \CP^1$.
	Let $E_{12}\subset \CP^2$ be the member of the pencil  whose projection to the parameter space $\CP^1$  is the midpoint $p_{12}$ between  $p_1$ and $p_2$.
	The generic fiber of the projection $W_{3333}:\CP^2\setminus E_{12}\to \CC$ is a 9-punctured torus. From each $I_3$ fiber we have three vanishing cycles.
	After picking paths from these degenerate fibers to a fixed point $p\in \CC$, we can match the vanishing cycles to the cycles in $E_p$ as drawn in \cref{fig:x3333}.
	\begin{remark}
		A small digression, useful for geometric intuition but otherwise unrelated to this discussion, concerning the apparent lack of symmetry in the vanishing cycles of $W_{3333}$.
		One might expect that the configuration of vanishing cycles which appear in \cref{fig:x3333} to be entirely symmetric.
		While the Hesse pencil has symmetry group which acts transitively on the $I_3$ fibers, to construct the vanishing cycles one must pick a base point $p$ and a basis  of paths from $E_p$ to the critical fibers of the Hesse configuration, which breaks this symmetry. 
		Each path from a point $p$ to one of the four critical values $p_i$ gives us 3 parallel vanishing cycles. 
		The 4 critical fibers of the Hesse configuration lie at the corners of an inscribed tetrahedron on $\CP^1$. 
		By choosing $p=p_{123}$ to be the center of a face spanned by three of these critical values, 3 paths (say, $\gamma_1, \gamma_2, \gamma_3$) from $p$ to the critical values are completely symmetric.
		From such a choice, we obtain vanishing cycles $\ell_1^j, \ell_2^j, \ell_3^j$, where $j\in \{1, 2, 3\}$.
		The homology classes (and in fact, honest vanishing cycles) 
		\begin{align*}
			|\ell_1^j|=\langle 0 ,1\rangle&& |\ell_2^j|=\langle 1, 0\rangle&& |\ell_3^j|=\langle 1, 1\rangle 
		\end{align*}
		are indistinguishable after action of $SL(2, \ZZ)$, reflecting the overall symmetry of both the $X_{3333}$ configuration and the symmetry of the paths. The action of $SL(2, \ZZ)$ which interchanges these cycles also permutes the 9 points of $E_{p_{123}}$ which are the base points of this fibration.
		
		However, the introduction of the last path from the fourth critical fiber to $p_{123}$ breaks this symmetry. At best, this path can be chosen so that there remains one symmetry, which exchanges $\ell_1$ and $\ell_2$.
		In this setup, the vanishing cycles $\ell_4^i$ lies in the class $\langle 1, -1 \rangle$.
		Correspondingly, the class $\langle 1, -1\rangle$ distinguishes the class $\ell_3$ from the other classes by intersection number.
		
	\end{remark}

	This pencil is sometimes called the \emph{anticanonical pencil of $\CP^2$. }
	\begin{figure}
		\centering
		\input{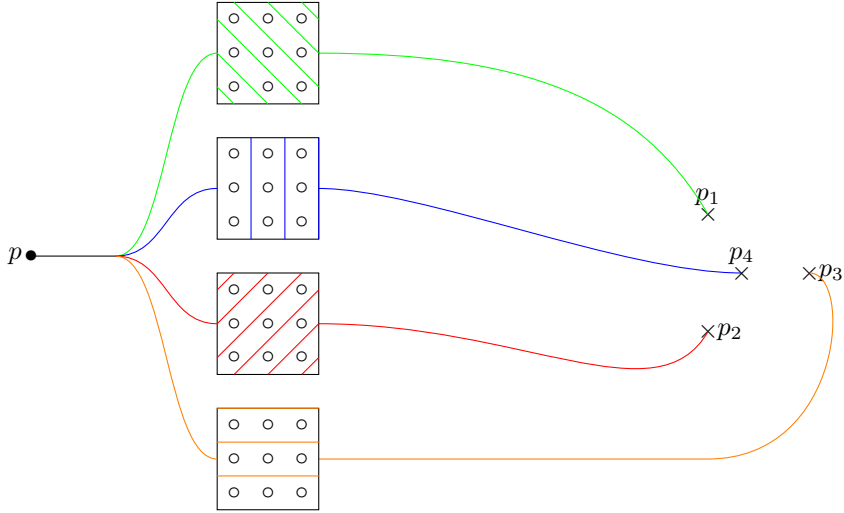}
		\caption
		{
			A basis for the vanishing cycles for $X_{3333}$ given in \cite{seidel2016fukaya}.
		}
		\label{fig:x3333}
	\end{figure}
	The automorphism group of the Hesse pencil is called the Hessian Group \cite{jordan1877memoire}. This group acts on $\CP^1$ by permuting the critical values by even permutations.
	Consider a pencil automorphism $g: \CP^2\to \CP^2$ which acts on the 4 critical values via the permutation $(p_1p_2)(p_3p_4)$. The point $p_{12}$ is fixed under this action, therefore $g(E_{12})=E_{12}$. 
	While the fiber $E_{12}$ is mapped to itself, the map is a non-trivial automorphism of the fiber, swapping the vanishing cycles for $p_1$ and $p_2$: 
	\begin{align*}
		g(\ell_1)=&\ell_2\\
		g(\ell_2)=&\ell_1.
	\end{align*}

	We can use the Lefschetz fibration to associate to each cycle $\ell$ in $E_{12}$ a Lagrangian in $\CP^2$ by taking the Hamiltonian suspension cobordism of $\ell$ in a small circle $p_{12}+\epsilon e^{i\theta}$ around the point $p_{12}$ in the base of the Lefschetz fibration.
	Call the Lagrangian torus constructed this way $T_{\epsilon, \ell}$.
	The automorphism of the pencil $g: \CP^2 \to \CP^2$ interchanges the Lagrangians $T_{\epsilon, \ell_1}$ and $T_{\epsilon , \ell_2}$

	The standard moment map $\val_{dz}:\CP^2\to Q_{\CP^2,dz}$ can be chosen so that one of the $I_3$ fibers of the Hesse configuration projects to the boundary of the Delzant polygon $Q_{\CP^2}.$
	We choose the moment map so that $\val^{-1}_{dz}(\partial Q_{\CP^2,dz})=E_1$, the $I_3$ fiber lying above the point $p_1$.
	When one performs a nodal trade exchanging the corners of the moment map for interior critical fibers, we obtain a new toric base diagram, $Q_{\CP^2}$. The boundary of the base of the almost toric fibration $\val:  \CP^2\to Q_{\CP^2}$  corresponds to a smooth symplectic torus.
	We arrange that  
	\[
		\val^{-1}(\partial Q_{\CP^2})=E_{12}\subset \CP^2.
	\]
	By comparison to the standard moment map, one sees that the cycle $\ell_1\subset E_{12}$ projects to a point in the boundary of the moment map, while the cycle $\ell_2\subset E_{12}$ projects to the whole boundary cycle. 
	This gives us an understanding of the valuation projections of Lagrangian $T_{\epsilon ,\ell_1}$ and $T_{\epsilon ,\ell_2}$. 
	$T_{\epsilon,\ell_1}$ has valuation projection which roughly looks like a point, and $T_{\epsilon, \ell_2}$ has valuation projection which is a cycle that travels close to the boundary of $Q_{\CP^2}$. As a result we have Hamiltonian isotopies identifying the Lagrangians
	\begin{align*}
		T_{\epsilon, \ell_1}\sim & F_p       \\
		T_{\epsilon, \ell_2}\sim & L_{outer}.
	\end{align*}
	See \cref{fig:x3333tropical}, where $L_{outer}$ is drawn in red, and $F_p$ is drawn in blue.
	\begin{figure}
		\centering
		\input{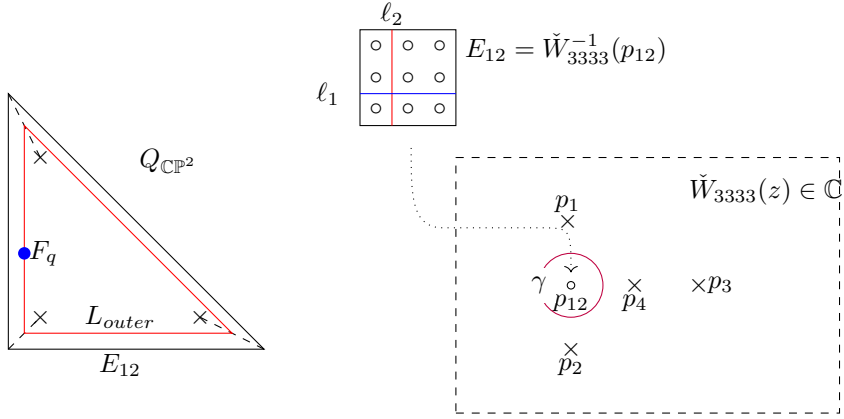}
		\caption{Relating tropical Lagrangians to thimbles}
		\label{fig:x3333tropical}
	\end{figure}

	We conclude $g(L_{outer})\sim F_p$.
	As the projective linear group is connected, the morphism $g$ is symplectically isotopic to the identity, and since $H^1(\CP^2)$ is trivial, all symplectic isotopies are Hamiltonian isotopies. 
	Therefore the Lagrangians $L_{outer}$ and $F_p$ are Hamiltonian isotopic.

	By \cref{cor:tropicalexchange}, the Lagrangians $L_{inner}$ and $F_p$ are Lagrangian isotopic.
\end{proof}

This shows that $L_{inner}$ is obtained from a Lagrangian that we've seen before, but presented from a very different perspective.
By taking a Lagrangian isotopy, $L_{outer}$ can be moved to $L_{inner}$. 
We obtain the following relationships between Lagrangian submanifolds. 
Here, the equalities are taken up to Hamiltonian isotopy, and the dashed lines are Lagrangians which we expect to be Hamiltonian isotopic. 
See also \cref{fig:toricdimermutation,rem:linnermonotone}.
\[
	\begin{tikzcd}
		L_{T^2} \arrow[dashed, equals]{d} \arrow[dash]{r}{\text{mutation}}& \mu_{D_f}L_{T^2} \arrow{r}{\text{Lag. Isotopy}}  \arrow[dashed, equals]{d} &  L_{inner} \arrow[dash]{r}{\text{Lag. Isotopy}} & L_{outer} \arrow[equals]{d}\\
		T^2_{prod, mon} \arrow[dash]{r}{\text{mutation}} & T^2_{chek, mon} \arrow[dash]{rr}{\text{Lag. Isotopy}}& & F_p.
	\end{tikzcd}.
\]
 
These tori are isomorphic objects of the Fukaya category, but this is a consequence of $\Fuk(\CP^2)$ having so few objects.
\subsection{\texorpdfstring{$A$}{A}-Model on \texorpdfstring{$\CP^2\setminus E$}{CP2\backslash E}. }
\label{subsec:amodeloncp2}
We now study the map $g:\CP^2\to\CP^2$ given by the automorphism of the Hesse configuration, and its action on the Fukaya category. 
Informally, the Fukaya category is an $A_\infty$ category associated to a symplectic manifold $X$.
The objects of the category are monotone or unobstructed Lagrangian submanifolds, and the morphisms between two Lagrangian submanifolds is their Lagrangian intersection Floer cohomology $\CF(L_0, L_1)$.
The product $m^2: \CF(L_1, L_2)\tensor \CF(L_0, L_1)\to \CF(L_0, L_2)$ is given by counting holomorphic triangles with boundary on $L_0, L_1, L_2$ and strip-like ends limiting to intersection between $L_1\cap L_2, L_0 \cap L_1,$ and $L_0\cap L_2$.
Higher products similarly count holomorphic $k+1$-gons with boundary on $L_0, \ldots, L_k$ and strip-like ends limiting to the intersections between $L_{i}\cap L_{i+1}$.

As there are few monotone Lagrangians in $\CP^2$, the category $\Fuk(\CP^2)$ does not contain many objects, so the automorphism of the Fukaya category induced by $g$ is not so interesting.
By removing an anticanonical divisor $E=E_{12}$ we obtain a much larger category.
For example, the Lagrangians $L_{outer}$ and $F_q$ are no longer Hamiltonian isotopic in $\CP^2\setminus E$.
\begin{claim}
	$L_{outer}$ and $F_q$ are not isomorphic objects of $\Fuk(\CP^2\setminus E)$
\end{claim}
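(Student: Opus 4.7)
\emph{Proof plan.} The strategy is to find a test Lagrangian whose Floer pairing distinguishes $L_{outer}$ from $F_q$. Concretely, I will show that $HF^\bullet(L_{outer},F_q)=0$ while $HF^\bullet(F_q,F_q)\neq 0$ in $\Fuk(\CP^2\setminus E)$. An isomorphism of objects $L_{outer}\cong F_q$ would force these two Floer cohomology groups (viewed as right modules over $HF^\bullet(F_q,F_q)$) to be quasi-isomorphic, contradicting the comparison.

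\emph{Step 1 (vanishing of $HF^\bullet(L_{outer},F_q)$).} Recall that $L_{outer}$ was built from a tropical curve lying close to the boundary of $Q_{\CP^2\setminus E}$, equivalently as the boundary $\partial D_\gamma$ of a tubular disk subbundle of the symplectic normal bundle to $E$ along a curve $\gamma\subset E$. Shrinking the tube radius within $\CP^2\setminus E$ pushes $L_{outer}$ into an arbitrarily small neighborhood of $E$, which we may arrange to be disjoint from the compact set where $F_q$ lives. This Lagrangian isotopy has flux in $H^1(L_{outer};\RR)\cong\RR^2$; but this flux is exactly the data of a $\Lambda$-weighting (local system) on $L_{outer}$ in the Fukaya category, so the shrinking deformation realizes a Fukaya-category isomorphism between $L_{outer}$ and its shrunk version. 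Since the latter is disjoint from $F_q$, the Floer complex $CF^\bullet(L_{outer},F_q)$ is empty and the Floer cohomology vanishes.

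\emph{Step 2 (non-vanishing of $HF^\bullet(F_q,F_q)$) and main obstacle.} The torus $F_q$ is the usual monotone Clifford fiber at the barycenter of the pushed-in Delzant polytope. Its contributing Maslov-index-$2$ disks lie in a neighborhood of $F_q$ disjoint from $E$, so the Gromov--Witten potential of $F_q$, and hence its critical points, is the same in $\CP^2\setminus E$ as in $\CP^2$; at any such critical point the pearl-model computation gives $HF^\bullet(F_q,F_q)\cong H^\bullet(T^2)\otimes\Lambda\neq 0$. The delicate point is the flux-absorption argument in Step 1: one has to verify that the tube-shrinking isotopy of $L_{outer}$ really does give an isomorphism (not merely a Lagrangian isotopy) in the monotone Fukaya category of $\CP^2\setminus E$, which amounts to matching the swept flux with a well-chosen $\Lambda$-local system on $L_{outer}$ compatible with the monotonicity constant. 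Once this compatibility is established, the comparison $0\cong HF^\bullet(F_q,F_q)\neq 0$ is immediate and rules out any isomorphism $L_{outer}\cong F_q$ in $\Fuk(\CP^2\setminus E)$.
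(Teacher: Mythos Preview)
Your approach differs from the paper's, and both steps contain real gaps.

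The paper's proof is much shorter: it uses the Lagrangian thimble $\tau_1$ drawn from one of the nodal fibers of the almost toric fibration (as in \cref{fig:tpIV}) as a test object. This thimble meets $L_{outer}$ transversely in a single point, so $\CF(L_{outer},\tau_1)\neq 0$, while $\tau_1$ is disjoint from $F_q$, so $\CF(F_q,\tau_1)=0$. That already rules out an isomorphism, and (as the paper notes) in fact shows the two tori are not even topologically isotopic in $\CP^2\setminus E$.

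Your Step~1 is both unnecessary and incorrectly justified. It is unnecessary because $L_{outer}$ and $F_q$ already have disjoint valuation projections in $Q_{\CP^2\setminus E}$ (one is a loop near $\partial Q$, the other a single interior point), so their Floer complex is zero on the nose without any shrinking. More seriously, the claim that a Lagrangian isotopy with nonzero flux becomes an isomorphism in the Fukaya category after twisting by a $\Lambda$-local system is \emph{false} in general: this is exactly the wall-crossing phenomenon discussed at length elsewhere in the paper. Disk bubbling along the isotopy can change the object in ways not captured by a local-system correction, so ``absorbing the flux'' is not a valid move without substantial further argument.

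Your Step~2 is also misargued. The Maslov-$2$ disks bounded by the Clifford torus in $\CP^2$ do \emph{not} lie in a small neighborhood of $F_q$; each one extends out to a component of the toric boundary $\{z_1z_2z_3=0\}$, which is precisely what gets smoothed into $E$. Hence these disks meet $E$ and do not survive in $\CP^2\setminus E$, so you cannot import the $\CP^2$ superpotential computation. One can still argue $HF(F_q,F_q)\neq 0$ here, but by a different mechanism (for suitable $q$ the torus bounds no holomorphic disks in $\CP^2\setminus E$, so $HF\cong H^*(T^2)$), not the one you give. The paper's thimble argument sidesteps all of this: it needs no disk counts and no statements about self-Floer cohomology of either torus.
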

\begin{proof}
	The symplectic manifold $\CP^2\setminus E$ contains a Lagrangian thimble $\tau_1$ which is constructed from the singular fiber of the almost toric fibration and extends out towards the removed curve $E$ (see \cref{fig:tpIV}).
	This thimble $\tau_1$ intersects $L_{outer}$ at a single point, and therefore $\CF(L_{outer}, \tau_1)$ is nontrivial.
	However, $\tau_1$ is disjoint from the fiber $F_q$, so $\CF(F_q, \tau_1)$ is trivial.
	As a result, $F_q$ and $L_{outer}$ are not isomorphic objects of the Fukaya category.\footnote{In fact, the same argument shows that $L_{outer}$ and $F_q$ are not topologically isotopic.}
\end{proof}
Since $E_{12}$ was fixed by the symplectomorphism $g: \CP^2\to \CP^2$, the restriction to the complement $g: \CP^2\setminus E_{12}\to \CP^2\setminus E_{12}$ is still defined.
\begin{corollary}
	The automorphism of the Fukaya category induced by the symplectomorphism $g$
	\[
		g^*: \Fuk(\CP^2\setminus E)\to \Fuk(\CP^2\setminus E)
	\]
	acts nontrivially on objects.
	\label{cor:nontrivial}
\end{corollary}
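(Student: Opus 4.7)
The plan is to promote the identification $g(L_{outer})\sim F_p$ established inside $\CP^2$ in the proof of \ref{prop:tropicalischekanov} to a Hamiltonian isotopy living in the open complement $\CP^2\setminus E$, and then feed it into the non-isomorphism criterion provided by the preceding claim. First I would note that because $g$ is a Hesse pencil automorphism fixing the point $p_{12}\in\CP^1$, it restricts to a symplectic self-map of the elliptic fiber $E=E_{12}$, and in particular preserves $E$ setwise. Hence $g$ restricts to a symplectomorphism of the open subset $\CP^2\setminus E$, which in turn induces an autoequivalence $g^\ast$ of $\Fuk(\CP^2\setminus E)$.

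Next I would revisit the proof of \ref{prop:tropicalischekanov} and extract the key geometric statement in the relative form we need: the two auxiliary tori $T_{\epsilon,\ell_1}$ and $T_{\epsilon,\ell_2}$, obtained by parallel transporting the vanishing cycles $\ell_1$ and $\ell_2\subset E_{12}$ around the small loop $p_{12}+\epsilon e^{i\theta}$ in the base of the Lefschetz fibration $W_{3333}$, already lie in $\CP^2\setminus E$. Since $g$ swaps the critical values $p_1\leftrightarrow p_2$ and its action on $E_{12}$ exchanges $\ell_1\leftrightarrow \ell_2$, it sends $T_{\epsilon,\ell_1}$ to a Lagrangian Hamiltonian isotopic to $T_{\epsilon,\ell_2}$ \emph{inside} $\CP^2\setminus E$ (the required Hamiltonian can be chosen supported in a tubular neighborhood of $E$ minus $E$ itself, since both tori are contained in such a neighborhood).

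Then I would argue that the two ambient Hamiltonian isotopies
\begin{align*}
    T_{\epsilon,\ell_1}&\sim F_q,\\
    T_{\epsilon,\ell_2}&\sim L_{outer},
\end{align*}
used in \ref{prop:tropicalischekanov}, can be realized within $\CP^2\setminus E$. This is where I expect the main obstacle: the isotopies in the original proof were produced by comparing valuation projections under the almost toric fibration $\val:\CP^2\to Q_{\CP^2}$, and one must check that the straight-line interpolations between the valuation images of $T_{\epsilon,\ell_i}$ and those of $F_q$/$L_{outer}$ can be lifted to Hamiltonian isotopies whose supports are disjoint from $E=\val^{-1}(\partial Q_{\CP^2})$. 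The valuation images of both $T_{\epsilon,\ell_1}$ and $F_q$ concentrate near an interior point of $Q_{\CP^2}$, while those of $T_{\epsilon,\ell_2}$ and $L_{outer}$ run along a loop pushed slightly off $\partial Q_{\CP^2}$; in each case the interpolating family can be kept in the open interior of the base, so its lift is compactly supported away from $E$.

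Combining these steps yields $g^\ast([F_q])=[L_{outer}]$ in $\Fuk(\CP^2\setminus E)$. By the preceding claim, $F_q$ and $L_{outer}$ are distinguished by the thimble $\tau_1$ (nontrivial versus trivial Floer cohomology), so they represent non-isomorphic objects, and hence $g^\ast$ acts nontrivially on the set of isomorphism classes of objects. As an aside, the same computation gives the stronger assertion advertised in the introduction, namely that $g$ interchanges $L_{T^2}$ and $F_q$, once $L_{T^2}$ is identified with $L_{outer}$ via the chain of mutations and nodal trades recorded at the end of \ref{subsec:comparingtori}.
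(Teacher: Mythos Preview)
Your proposal is correct and follows the same approach as the paper. The paper treats the corollary as immediate from the sentence preceding it (that $g$ restricts to $\CP^2\setminus E_{12}$) together with the proof of \cref{prop:tropicalischekanov} and the preceding claim; you have simply spelled out the one point the paper leaves implicit, namely that the Hamiltonian isotopies $T_{\epsilon,\ell_1}\sim F_q$ and $T_{\epsilon,\ell_2}\sim L_{outer}$ can be taken to live in $\CP^2\setminus E$ rather than merely in $\CP^2$, which is justified exactly as you say since the valuation images stay in the interior of $Q_{\CP^2}$.
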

This section of the paper is a series of observations and conjectures outlining homological mirror symmetry with the $A$-model on $\CP^2\setminus E$, and $B$-model on $\check X_{9111}$ which hope to shed light on the following conjecture. 
\begin{conjecture}
	The symplectomorphism $g: \CP^2\to \CP^2$ is mirror to fiberwise Fourier Mukai transform on the elliptic surface $\check X_{9111}$ which interchange the points of $\check X_{9111}$ with line bundles supported on the fibers of the elliptic fibration.
\end{conjecture}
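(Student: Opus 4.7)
The plan is to combine the geometric interchange of $L_{T^2}$ and $F_q$ under $g$ with a homological identification of mirror objects, and then propagate agreement from this distinguished pair to a natural isomorphism of autoequivalences on all of $D^b\Coh(\check X_{9111})$. Under \cref{assum:x9111hms} I would expect $L_{T^2}$ to be mirror to a degree-zero line bundle $\mathcal{L}$ supported on a smooth elliptic fiber of $\check W_{9111}\colon \check X_{9111}\to \CP^1$, while the SYZ fiber $F_q$ with local system is mirror to a skyscraper $\mathcal{O}_{\check q}$; the relative Poincar\'e kernel $\mathcal{P}$ defines a fiberwise Fourier--Mukai transform $\Phi_{\mathcal{P}}$ interchanging such line bundles and skyscrapers.

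The first step is to pin down the mirror of $L_{T^2}$. Using the presentation of $L_{T^2}$ as an iterated Lagrangian surgery of the thimbles $\tau_i$ announced in \cref{subsec:comparingtori}, and the fact that under \cref{assum:x9111hms} these thimbles generate $\Fuk(\CP^2\setminus E)$ with known mirrors obtained via $\check\pi_{bl}$, one may write the mirror of $L_{T^2}$ as an explicit iterated mapping cone in $D^b\Coh(\check X_{9111})$. Its numerical invariants are then controlled by the Floer-theoretic support computation of \cref{subsec:dimercomputation}: the Kasteleyn partition function $Z^G(\nabla)$ of \cref{exam:dimerelliptic} cuts out a cubic agreeing with the Hesse fiber $E$, which forces the cone to be a line bundle on a single smooth elliptic fiber rather than a more general coherent sheaf.

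The second step is to verify that $g^*$ acts as $\Phi_{\mathcal{P}}$ on this pair of objects. The symplectomorphism $g$ preserves $E_{12}$ and, as exhibited in the proof of \cref{prop:tropicalischekanov}, acts on $H_1(E_{12})$ by the $SL(2,\ZZ)$ involution swapping the vanishing cycles $\ell_1,\ell_2$ of \cref{fig:x3333}; under the SYZ duality in the neighborhood of $E$ this is exactly the involution inducing fiberwise FM. The Hamiltonian isotopy $g(L_{T^2})\simeq F_q$ supplied by the main theorem therefore translates into $g^*(\mathcal{L})\simeq \mathcal{O}_{\check q}\simeq \Phi_{\mathcal{P}}(\mathcal{L})$, confirming the conjecture on the distinguished pair.

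The hard part will be promoting this identification from two objects to a natural isomorphism of autoequivalences. The cleanest strategy is to check agreement of $g^*$ and $\Phi_{\mathcal{P}}$ on a generating collection, for instance the thimbles $\{\tau_i\}$ mirror to line bundles in $\check\pi_{bl}^{*}\Coh(\CP^2)$, together with compatibility with the $A_\infty$-structure. The action of $g$ on the vanishing cycle diagram of \cref{fig:x3333} permutes the $\tau_i$ in a combinatorial pattern that must be matched against the explicit action of $\Phi_{\mathcal{P}}$ on their sheaf-theoretic mirrors, and after compensating by an overall line-bundle twist and shift, uniqueness of autoequivalences acting as a prescribed permutation on a full exceptional/generating set would force $g^* \simeq \Phi_{\mathcal{P}}$. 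The principal obstacle is this last matching: it requires both a rigorous surgery-cone presentation of $L_{T^2}$ in the partially wrapped Fukaya category and sufficient control of the higher products under both functors, neither of which is established in the present paper and both of which depend on a precise form of \cref{assum:x9111hms}.
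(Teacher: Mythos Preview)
The statement you are attempting to prove is explicitly a \emph{conjecture} in the paper, not a theorem; the paper offers no proof of it. Section~\ref{subsec:amodeloncp2} is framed as ``a series of observations and conjectures \ldots\ which hope to shed light on'' this statement, and the strongest result actually established is \cref{thm:fmmirror}: under \cref{assum:x9111hms} and an unobstructedness hypothesis on a specific Lagrangian cobordism, $L_{T^2}$ is mirror to a divisor \emph{Chow-equivalent} to a fiber of $\check W_{9111}$. So there is no ``paper's own proof'' to compare against; your proposal is an outline of a possible attack on an open problem.

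That said, your outline is broadly aligned with the evidence the paper assembles, and you correctly isolate where the genuine difficulty lies. A few specific comments. Your Step~1 overstates what is available: the paper does not identify the mirror of $L_{T^2}$ as a degree-zero line bundle on a smooth fiber, only as something Chow-equivalent to a fiber, and the argument goes through the surgery/cobordism exact triangle of \cref{thm:fmmirror} rather than through the Kasteleyn computation. The Kasteleyn identification of $\CF(L(\phi^\bullet_w,\phi^\circ_v),F_0)$ with $C^\bullet(G,\nabla)$ is itself only a conjecture in the paper, so it cannot be used to pin down numerical invariants rigorously. Your Step~2 captures the spirit of \cref{prop:tropicalischekanov} and \cref{cor:nontrivial}, but note that the paper only asserts $g(L_{outer})\sim F_p$ inside $\CP^2$; the relevant statement in $\CP^2\setminus E$ is that $g$ interchanges the two non-isomorphic objects, which is evidence for, not a verification of, the Fourier--Mukai claim on the mirror. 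Your Step~3 is honest about the gap: matching $g^*$ and $\Phi_{\mathcal P}$ on a generating set and controlling the higher products is precisely what is missing, and the paper makes no attempt at this. In short, your proposal is a reasonable roadmap, but it is a roadmap toward a conjecture the paper leaves open, and several of the ingredients you invoke are themselves conditional or conjectural in the paper.
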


\subsubsection{Homological mirror symmetry for $\CP^2\setminus E$}
To that end, we study $L_{T^2}\subset \CP^2\setminus E$.
\paragraph{An intermediate Blowup and Base Diagrams for $\check X_{9111}$:} 
We will  begin with a description of the elliptic surface $\check X_{9111}$ as an iterated blow up of $\CP^2$ along the base points of an elliptic pencil following \cite{auroux2006mirror}. Consider the pencil
\[
	(z_1^2z_2+z_1z_2^2+z_3^3)+tz_1z_2z_3=0.
\]
This elliptic fibration has 3 base points of degree 4, 4, and 1.
\begin{figure}
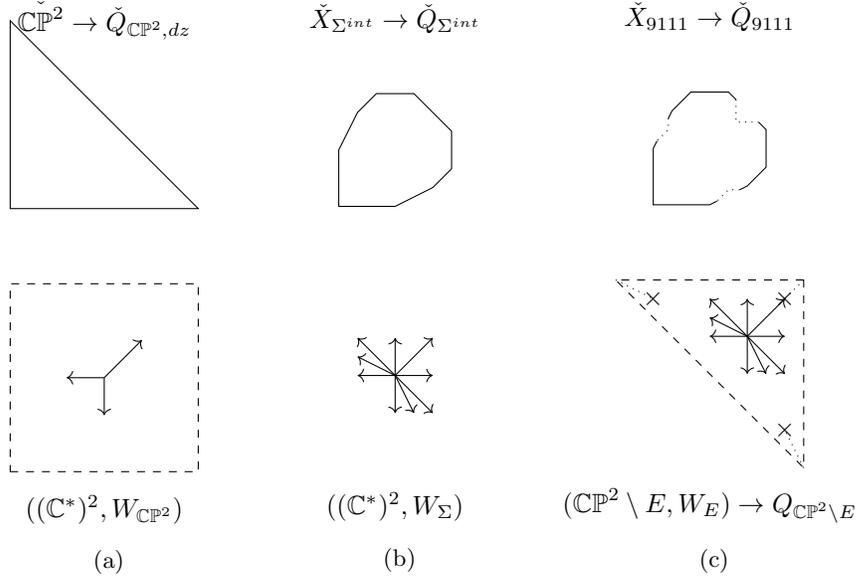

	\centering
	\begin{subfigure}{.3\linewidth}
		\centering
		\input{figures/mirrorsymmetrycp2.tikz}
		\caption{}
		\label{fig:mirrorsymmetrycp2}
	\end{subfigure}
	\begin{subfigure}{.3\linewidth}
		\centering
		\input{figures/mirrorsymmetryxint.tikz}
		\caption{}
		\label{fig:mirrorsymmetryxint}
	\end{subfigure}
	\begin{subfigure}{.35\linewidth}
		\centering
		\input{figures/mirrorsymmetryx9111.tikz}
		\caption{}
		\label{fig:mirrorsymmetryx9111}
	\end{subfigure}
	\caption[Blow up $\check X_{9111}\to\CP^2$ and mirrors]{\textbf{Top}: obtaining $\check X_{9111}$ as a toric base diagram by first blowing up $\CP^2$ 6 times, then  blowing up 3 more times. \textbf{Bottom}: Admissibility conditions for the $A$-model mirrors. }
	\label{fig:x9111}
\end{figure}
We can arrange for 6 of the blowups  (3 on the two base points of degree 4) to be toric.
We therefore obtain an intermediate step between $\check \CP^2$ and $\check X_{9111}$ which is the toric symplectic manifold $\check X_{\Sigma^{int }}$.
The toric diagram $ Q_{\Sigma^{int }}$ is the Delzant polytope with 9 edges.
The remaining 3 blowups introduce nodal fibers in the toric base diagram $\check Q_{9111}$ for $\check X_{9111}$ which has 9 edges and 3 nodal fibers.
The 9 edges of the toric base correspond to the nine $\CP^1$'s making the $I_9$ fiber of the fibration.
The eigenray at each cut in the diagram is parallel to the boundary curves.
See \cref{fig:x9111} for the base diagrams of these different blowups.
Let  $\check W_{9111}: \check X_{9111}\to \CP^2 \to \CP^1$ be the composition of blowdown and projection to the $t$ parameter of the pencil.
\paragraph{$B$-model of $\check X_{9111}$:} 
Let $\check \pi: \check X_{9111}\to \check X_{\Sigma^{int }}$ be the projection of the last three blowups.
By \cite{bondal1995semiorthogonal} have a semiorthogonal decomposition of the category of the blowup as
\[
	D^b\Coh(\check X_{9111})= \langle \check \pi^{-1} D^b\Coh(\check X_{\Sigma^{int }}), \mathcal O_{D_1}, \mathcal O_{D_2}, \mathcal O_{D_3 } \rangle,
\]
where the $D_i$ are the exceptional divisors of the last three blowups. 
For sheaves $\mathcal O_H\in D^b\Coh(\check X_{\Sigma^{int }})$ with support on a hypersurface $H$, this semiorthogonal decomposition states that there is a corresponding sheaf in $\check X_{9111}$ whose support is on the total transform of $H$. 
Should $H$ avoid the points of the blow-up, the total transform will have the same valuation projection as $H$.
Should $H$ contain the point of the blowup, the total transform includes the exceptional divisor of the blow-up.
A fiber of the elliptic surface $W^{-1}_{9111}(p)$ is the proper transform of $E_p\subset X_{\Sigma^{int }}$ , a member of the pencil after 6 blowups.
On sheaves, we have an exact sequence:
\begin{equation}
	 \left( \bigoplus_{i=1}^3\mathcal O_{D_i}\right) \to \pi^{-1}(\mathcal O_{E_p}) \to \mathcal O_{ W_{9111}^{-1}(p)}.
	\label{eq:stricttransform}
\end{equation}

We will now set up some background necessary to state a similar story for the $A$-model, summarized in \cref{assum:x9111hms}.

\paragraph{Base for $\CP^2\setminus E$:} 
Running the machinery of \cite{gross2003affine} on  $\check Q_{9111}$, the SYZ base for $\check X_{9111}$ in the complement of a smooth anti-canonical divisor, will yield the SYZ base $Q_{\CP^2\setminus E}$ for $\CP^2\setminus E$.
The base diagram $Q_{\CP^2\setminus E}$ can also be constructed by first constructing the mirror to the space $\check X_{\Sigma^{int }}$.
As $\check X_{\Sigma^{int }}$ is a toric variety, the mirror space is a Landau Ginzburg model $(X_{\Sigma^{int }}, W_{\Sigma^{int }})=((\CC^*)^2, W_{\Sigma^{int }})$, where the superpotential $W_{\Sigma^{int }}$ yields a monomial admissibility condition (in the sense of \cite{hanlon2019monodromy}) $\Delta_{\Sigma^{int }}$ on $Q_{\Sigma^{int }}=\RR^2$.
\begin{assumption}[Monomial Admissible Blow-up]
	There is notion of monomial admissibility condition for $\CP^2\setminus E$. This monomial admissibility condition is constructed from the data of the monomial admissibility condition  $((\CC^*)^2, W_{\Sigma^{int }})$.
\end{assumption}

We now provide some motivation for this assumption. 
\begin{figure}
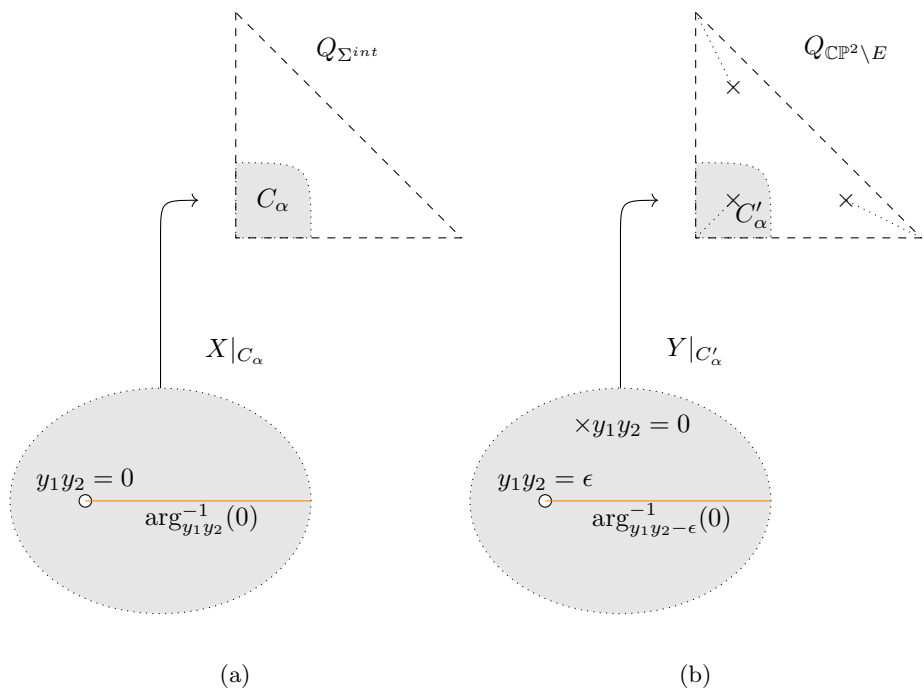

	\centering
	\begin{subfigure}{.48\linewidth}
		\centering
		\input{figures/admissibilitynearcorner1.tikz}
		\caption{}
	\end{subfigure}
	\begin{subfigure}{.48\linewidth}
		\centering
		\input{figures/admissibilitynearcorner2.tikz}
		\caption{}
	\end{subfigure}
	\caption[Relating Monomial and Lefschetz models for blowup.]
	{
		Relating Lagrangians and Admissibility conditions between $(\CC^*)^2$ and $\CP^2\setminus E$ with local Lefschetz models near corners.
	}
	\label{fig:introducingcuts}
\end{figure}
Recall, a monomial admissibility condition assigns to each monomial $c_\alpha z^\alpha$ a closed set $C_\alpha\subset Q$ on which $\arg_{c_\alpha z^{\alpha}}(L|_{C_\alpha})=0$.
For a set $C_\alpha$, denote by $X_{\Sigma^{int}}|_{C_\alpha}$  the portion of the SYZ valuation with valuation lying inside of $C_\alpha$.
Let $L|_{C_\alpha}$ be the restriction of a Lagrangian $L$ to the preimage $\val^{-1}(C_\alpha)$.
A Lagrangian is monomial admissible if for every $\alpha$,  $L|_{C_\alpha}\subset \arg_{c_{\alpha} z^{\alpha}}^{-1}(0)$.
The projection $\arg_{c_\alpha z^\alpha}^{-1}(0)\to C_\alpha$ is an $S^1$ subbundle of the SYZ fibration $X|_{C_{\alpha}}\to C_\alpha$.

To obtain $Q_{\CP^2\setminus E}$ from $Q_{\Sigma^{int }}$, we add in three cuts mirror to the three blowups. 
These three cuts are added by replacing the regions $C_{z_1z_2}, C_{z_1z_2^{-2}}$ and $C_{z_1^{-2}z_2}$ with affine charts $C_{z_1z_2}', C_{z_1z_2^{-2}}'$ and $C_{z_1^{-2}z_2}'$ each containing a nodal fiber.
The charts $C_\alpha$  can be locally modelled on $\CC^2\setminus \{y_1y_2=0\}$ with monomial admissibility condition $(y_1y_2)^{-1}$.
We replace these with charts containing a nodal fiber modeled on $Y:=\CC^2\setminus \{y_1y_2=\epsilon\}$ and admissibility condition controlled by the monomial $(y_1y_2-\epsilon)^{-1}$.
The valuation map $Y|_{C'_\alpha}\to C_{\alpha}'$ is an almost toric fibration.
We still have an $S^1$ subbundle  $\arg_{y_1y_2-\epsilon}^{-1}(0)\subset Y|_{C'_\alpha}$ of the SYZ fibration  $Y|_{C'_\alpha}\to C_{\alpha}'$ whenever $\epsilon$ is not negative real.
This $S^1$-subbundle and the monomial $(y_1y_2-\epsilon)^{-1}$ should be used to construct a monomial admissibility condition on $\CP^2\setminus E$. See \cref{fig:introducingcuts,fig:lagrangiansincp2e}.
From mirror symmetry considerations we expect the function $W_E: \CP^2\setminus E\to \CC$ to decompose as the sum of 9 monomials, corresponding to a specific basis of $H^0(\CP^2, \mathcal O(E))$.
This corresponds to a decomposition of the open Gromov-Witten potential on the mirror space, which counts the number of Maslov 2-holomorphic disks with boundary on a SYZ fiber $(F_q, \nabla)\subset \check X_{9111}$.
For $F_q\subset \check X_{9111}\setminus I_9$, each disk contributing to the open Gromov-Witten potential must intersect the $I_9$ anticanonical divisor; the potential can be split into monomial terms by restricting to those disks with pass through a specific $\CP^1\subset I_9$ component. 

In terms of the almost toric base diagrams, this compatibility can be stated as a matching between the eigendirection of the introduced cuts and the ray of the fan corresponding to the controlling monomial over the region including the cut.

\paragraph{$A$-model on $\CP^2\setminus E$}
We now conjecture the existence of a mirror to the inverse-image functor on the $B$-model.
Lagrangian submanifolds which lie in the $S^1$ subbundle $\arg_{c_\alpha z^\alpha}^{-1}(0)\to C_\alpha$ should be in correspondence with Lagrangians which lie in the subbundle $\arg_{y_1y_2-\epsilon}^{-1}(0)\subset Y|_{C_\alpha'}$.
In particular monomial admissible Lagrangians of $X$ give us monomial admissible Lagrangians of $\CP^2\setminus E$. 
This allows us to transfer Lagrangians  $L$ in $\Fuk((\CC^*)^2, W_{\Sigma^{int }})$ to Lagrangians $\pi^{-1}(L)\in \Fuk(\CP^2\setminus E, W_E)$.
\begin{remark}
	$\pi^{-1}(L)$ does not arise from a map between the spaces $\CP^2\setminus E$ and $(\CC^*)^2$.
	The symplectic manifold $\CP^2\setminus E$ is constructed from $(\CC^*)^2$ by handle attachment. 
	We keep the notation $\pi^{-1}$ so that it is consistent with the inverse image functor from our earlier discussion on the $B$-model.
\end{remark} 
We observe that the thimbles of the newly introduced nodes (as in \cref{fig:lagrangiansincp2e}) do not arise as lifts of Lagrangians in $(\CC^*)^2$. 
When constructing the Lagrangian thimble, there is a choice of argument in the invariant direction of the node.
 We take the convention that in the local model $Y|_{C_{\alpha}'}$, the argument of the constructed thimble is positive and decreasing to zero along the thimble. 
With this choice of argument an application of the wrapping Hamiltonian will separate the $\tau_i$ and $\pi^{-1}(L)$ so that 
\[\pi^{-1}(L)\cap \theta(\tau_i)=\emptyset.\]
In the monomial admissible category, Lagrangians are infinitesimally wrapped before computing their Floer homology; this infinitesimal wrapping slightly increases the argument of each monomial on the Lagrangian over each area of control. 
A feature of working with monomial admissible category is that monomially admissible Lagrangians which are initially disjoint remain disjoint after the infinitesimal wrapping,
and so we conclude that $\hom(\pi^{-1}(L), \tau_i)=0$. In summary: see \cref{fig:introducingcuts,fig:lagrangiansincp2e}

\begin{assumption}[Monomial Admissible Blow-up II]
	There exists a Lagrangian correspondence between  $((\CC^*)^2, W_\Sigma)$  and $(\CP^2, W_E)$, giving us a functor 
\[
	\pi^{-1}:\Fuk_\Delta((\CC^*)^2, W_\Sigma)\to \Fuk_\Delta(\CP^2\setminus E, W_E).
\]
This functor gives us a semi-orthogonal decompositions of categories:
	\[
		\langle  \pi^{-1} \Fuk_\Delta((\CC^*)^2, W_\Sigma) , \tau_1, \tau_2, \tau_3\rangle.
	\]
We furthermore assume that this is mirror to the decomposition:
\[
	\langle \check \pi^{-1} D^b\Coh(X_{\Sigma^{int }}), \mathcal O_{D_1}, \mathcal O_{D_2}, \mathcal O_{D_3 } \rangle.
\]
	\label{assum:x9111hms}
\end{assumption}
\begin{figure}
	\centering
	\input{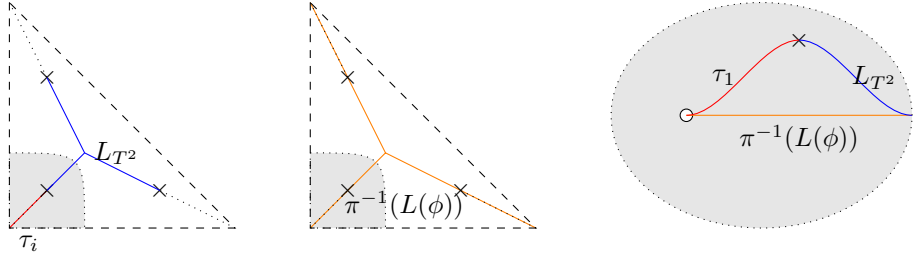}
	\caption[Some Lagrangians in $\CP^2\setminus E$ ]
	{
		The Lagrangians in $\CP^2\setminus E$ relevant to our homological mirror symmetry statement.
	}
	\label{fig:lagrangiansincp2e}
\end{figure}

\begin{remark}
	While to our knowledge this has not been proven for the monomial admissibility condition, this statement is understood by experts in the symplectic Lefschetz fibration admissibility setting \cite{hacking2020homological,auroux2006mirror}. 
	We give a translation of our statement into the Lefschetz viewpoint. 
	Consider the pencil of elliptic curves
	\[	
		p(z_1,z_2,z_3)+t\cdot (z_1z_2z_3).
	\] 
	where $p(z_1z_2z_3)=0$ is homogeneous degree 3 polynomial defining a generic elliptic curve $E$ meeting $z_1z_2z_3=0$ at 9 distinct points. 
	Consider the elliptic fibration $W_{E3}:X_{E3}\to \CP^1$ obtained by blowing up the 9 base points of this elliptic pencil, with exceptional divisors $P_1, \ldots P_9\subset X_{E3}$. 
	Let $z_\infty\in \CP^1$ be a critical value so that $W^{-1}_{E3}(z_\infty)=I_3.$
	Then 
	\[
		(\CC^*)^2 \simeq X_{E3}\setminus( I_3\cup P_1\cup \cdots \cup P_9),\]
	and we may look at the restriction 
	\begin{align*}
		W_{E3}|_{(\CC^*)^2}: (\CC^*)^2\to& \CP^1\setminus \{z_\infty\}= \CC\\
		(z_1, z_2) \mapsto& -\frac{p(z_1, z_2, 1)}{z_1z_2}
	\end{align*}
	By construction, this is a rational function which expands into 9 monomial terms, and has 9 critical points. 
	The nine monomial terms correspond to the 9 directions in the fan drawn in \cref{fig:mirrorsymmetryxint}.
	The Fukaya-Seidel category constructed with $W_{E3}|_{(\CC^*)^2}: (\CC^*)^2\to \CP^1$ is mirror to $X_{\Sigma^{int }}$, where the 9 thimbles drawn from these critical points are mirror to a collection of 9 line bundles generating $D^b\Coh(\check X_{\Sigma^{int }})$.
	These 9 thimbles correspond to 9 tropical Lagrangian sections $\sigma_\phi:Q_{\Sigma^{int }}\to (\CC^*)^2$ in the monomial admissible Fukaya category with fan \cref{fig:mirrorsymmetryxint}.
	The elliptic curve $E$ is a smoothing of the $I_3$ singularity.
	We now consider  
	$X=(\CP^2\setminus E )= (X_{E3}\setminus( E \cup P_1\cup\cdots \cup P_9))$. 
	The restriction 
	\[
		W_{E3}|_{X}: X\to( \CP^1\setminus\{0\})= \CC
	\]
	has  12 critical points, 9 of which may be identified with the critical points from the example before. Conjecturally, this is mirror to $\check X_{9111}$, where the thimbles from the three additional critical points are mirror to the exceptional divisors introduced in the blowup  $\check X_{9111}\to X_{\Sigma^{int }}$.
	In the monomial admissible picture,   the three additional thimbles are matched to the tropical Lagrangian thimbles introduced from the nodes appearing in the toric base diagram $Q_{\CP^2\setminus E}$ drawn in \cref{fig:mirrorsymmetryx9111}
\end{remark}
\begin{table}
	\centering
	\begin{subtable}{.8\linewidth}
		\centering
		\begin{tabular}{c|c}
			A-side & B-side\\ \hline
			$(\CC^*)^2, W_{\Sigma^{int }}$ & $X_{\Sigma^{int }}$\\
			9 Thimbles of $W_{\Sigma^{int }}$ & 9 Line Bundles\\
			$L(\phi_{T^2})$ & Member of 9111-pencil
		\end{tabular}
		\caption{}
	\end{subtable}\\
	\begin{subtable}{.8\linewidth}
		\centering
		\begin{tabular}{c|c}
			A-side & B-side\\ \hline
			$\CP^2\setminus E, W_E$ & $\check X_{9111}$\\
			Thimbles $\tau_i$ &Exceptional Divisors $D_i$\\
			$\pi^{-1}(L(\phi_{T^2}))$ & Total transform of member of 9111 Pencil\\
			$L_{T^2}$ & Fiber of $\check X_{9111}\to \CP^1$. 
		\end{tabular}
		\caption{}
	\end{subtable}
	\caption{A summary of the mirror correspondences that we use for this section.}
\end{table}
\subsubsection{A return to the Lagrangian $L_{T^2}\subset \CP^2\setminus E$.}
We now look at the Lagrangian three punctured torus $L_{{\phi_{T^2}}}\subset (\CC^*)^2=\CP^2\setminus I_3$  described in \cref{exam:ellipticchecker}.
In order to make a homological mirror symmetry statement, we need to use the non-Archimedean mirror $\check X_{9111}^\Lambda$, however the intuition should be independent of the use of Novikov coefficients.

Let $\phi_{T^2}= x_1\oplus x_2\oplus (x_1x_2)^{-1}$ be the tropical polynomial whose critical locus passes through the rays of the nodes added in the modification of $Q_{\Sigma^{int}}$ to $Q_{9111}$.
\begin{theorem}
	There exists a three-ended Lagrangian cobordism with ends
	\[
		(L_{T^2}, \tau_1\cup\tau_2\cup\tau_3)\rightsquigarrow \pi^{-1}(L_{{\phi_{T^2}}}),
	\]
	where $\tau_1\cup \tau_2\cup \tau_3$ is a disconnected Lagrangian in the Fukaya category.
	Provided that \cref{assum:x9111hms} holds and the cobordism is unobstructed, the Lagrangian $L_{T^2}$ is mirror to a divisor Chow-equivalent to a fiber of the elliptic fibration $\check W_{9111}:\check X_{9111}\to \CP^1$.
	\label{thm:fmmirror}
\end{theorem}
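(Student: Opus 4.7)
The plan is to realize the cobordism explicitly via iterated Biran--Cornea trace cobordisms, and then pass to the Fukaya category to extract the mirror statement from the blow-up exact sequence \cref{eq:stricttransform}.

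First I would identify the geometric content of the cobordism. The tropical curve $V(\phi_{T^2})$ is a three-pronged tropical pair of pants with legs in the directions $(1,0)$, $(0,1)$, $(-1,-1)$; its dimer Lagrangian lift $L_{\phi_{T^2}}\subset(\CC^*)^2$ of \cref{exam:ellipticchecker} is a three-punctured torus with cylindrical ends pointing in those directions. Under the monomial admissible blow-up correspondence $\pi^{-1}$ of \cref{assum:x9111hms}, these ends survive into $\CP^2\setminus E$ and approach the three nodal fibers of the almost toric fibration $\val:\CP^2\setminus E\to Q_{\CP^2\setminus E}$ along the respective eigenray directions. By \cref{cor:almosttoriclift} the compact torus $L_{T^2}$ is on the other hand the tropical Lagrangian lift of the closed tropical curve obtained by extending each leg of $V(\phi_{T^2})$ all the way into the corresponding node: each extension replaces a cylindrical end by a capping Lagrangian disk which the local model of \cref{claim:tropicalthimbles} identifies with the thimble $\tau_i$. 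Hence as Lagrangians in $\CP^2\setminus E$,
\[
    L_{T^2} \;=\; \pi^{-1}(L_{\phi_{T^2}}) \;\#\; \tau_1 \;\#\; \tau_2 \;\#\; \tau_3,
\]
each connect-sum being a Lagrangian surgery performed in a Weinstein neighborhood of the corresponding nodal fiber.

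Next I would assemble the cobordism. By the Biran--Cornea trace construction (or the generalized-surgery variant of \cref{prop:generalizedsurgeryprofile}) each surgery $L\mapsto L\#\tau_i$ admits a Lagrangian trace cobordism $(L,\tau_i)\rightsquigarrow L\#\tau_i$ supported in an arbitrarily small neighborhood of the surgery region. Since the three surgeries are carried out in three disjoint Weinstein neighborhoods of three distinct nodal fibers, the three traces may be performed simultaneously and concatenated into a single three-ended cobordism
\[
    (L_{T^2},\; \tau_1\cup\tau_2\cup\tau_3) \;\rightsquigarrow\; \pi^{-1}(L_{\phi_{T^2}}),
\]
with $\tau_1\cup\tau_2\cup\tau_3$ playing the role of the categorical direct sum in $\Fuk_\Delta(\CP^2\setminus E,W_E)$.

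Assuming unobstructedness, the Biran--Cornea iterated cone decomposition promotes this cobordism to an exact triangle
\[
    \tau_1\oplus\tau_2\oplus\tau_3 \;\longrightarrow\; \pi^{-1}(L_{\phi_{T^2}}) \;\longrightarrow\; L_{T^2}
\]
in $\Fuk_\Delta(\CP^2\setminus E,W_E)$. Under \cref{assum:x9111hms} the $\tau_i$ are mirror to the exceptional sheaves $\mathcal O_{D_i}$ and $\pi^{-1}(L_{\phi_{T^2}})$ is mirror to $\check\pi^{-1}(\mathcal O_{E_p})$ for the member $E_p$ of the $9111$-pencil in $\check X_{\Sigma^{int}}$ corresponding to $\phi_{T^2}$. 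Comparing this triangle with \cref{eq:stricttransform}, the third vertex of the induced triangle in $D^b\Coh(\check X_{9111})$ must agree in $K$-theory with $\mathcal O_{\check W_{9111}^{-1}(p)}$, so the mirror of $L_{T^2}$ is a sheaf whose support is a divisor Chow-equivalent to an elliptic fiber of $\check W_{9111}:\check X_{9111}\to\CP^1$.

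The main obstacle will be verifying unobstructedness of the three-ended cobordism: its ends are monomially admissible Lagrangians that are each either exact or monotone, but the cobordism itself lives in the Liouville completion equipped with monomial admissibility data, and one must rule out disk bubbling in that setting before the iterated cone may be promoted to a genuine exact triangle in $\Fuk_\Delta$. A secondary technical point, resolved by the Lagrangian glove formalism of \cref{subsec:tropicalpantslefschetz}, is matching the local surgery model at each node with the construction of \cref{cor:almosttoriclift} -- specifically, verifying that the attaching circle of $\tau_i$ on $\pi^{-1}(L_{\phi_{T^2}})$ is isotopic to the cylindrical end of $L_{\phi_{T^2}}$ in the eigenray direction of the node.
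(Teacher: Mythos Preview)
Your overall strategy---realize the relation as a Lagrangian surgery, take the trace cobordism, extract an exact triangle via Biran--Cornea, and compare with \cref{eq:stricttransform}---is exactly the paper's. The categorical half of your argument is fine.

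The geometric half, however, is inverted. The Lagrangian $\pi^{-1}(L_{\phi_{T^2}})$ does \emph{not} have ends approaching the nodal fibers: by the construction of $\pi^{-1}$ (the discussion around \cref{fig:introducingcuts,fig:lagrangiansincp2e}) it lives in the $S^1$-subbundle $\arg_{y_1y_2-\epsilon}^{-1}(0)$, which bypasses each critical fiber entirely. It is $L_{T^2}$ whose legs run into the nodes, and the capping disks supplied by \cref{cor:almosttoriclift} are \emph{inward}-pointing thimbles---these are not the $\tau_i$. The $\tau_i$ are separate thimbles drawn from the same three critical points but in the outward direction along the eigenray toward $\partial Q$ (the red thimble in \cref{fig:lagrangiansincp2e}). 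In the local Lefschetz model near each node, $L_{T^2}$ and $\tau_i$ are therefore two thimbles meeting transversally at the single critical point; surgering that intersection produces a Lagrangian whose argument avoids the node, and this is precisely the local model for $\pi^{-1}(L_{\phi_{T^2}})$. So the correct identity is
\[
    \pi^{-1}(L_{\phi_{T^2}}) \;=\; L_{T^2}\,\#\,(\tau_1\cup\tau_2\cup\tau_3),
\]
the reverse of your displayed equation. This is also what makes the trace cobordism come out in the direction stated in the theorem; with your orientation of the surgery the ends would be $(\pi^{-1}(L_{\phi_{T^2}}),\tau_i)\rightsquigarrow L_{T^2}$, inconsistent with the cobordism you then write down. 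Once the geometry is corrected, your exact-triangle and Chow-class comparison go through unchanged.
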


\begin{proof}
	We first construct the Lagrangian cobordism. 
	At each of the 3 nodal points in the base of the SYZ fibration $Q_{\CP^2\setminus E}$ the Lagrangian $L_{T^2}$ meets $\tau_i$ at a single intersection point.
	In our local model for the nodal neighborhood, this is the intersection of two Lagrangian thimbles.
	The surgery of those two thimbles is a smooth Lagrangian whose argument in the eigendirection of the node avoids the node.
	This was our local definition for $\pi^{-1}(L(\phi_{T^2}))$ in a neighborhood of the node.

	Recall that in this setting, we have an exact sequence of sheaves
	\begin{equation}
		\left( \bigoplus_{i=1}^3\mathcal O_{D_i}\right)\to \pi^{-1}(\mathcal O_{E_p})\to \mathcal O_{ W_{9111}^{-1}(p) }.
	\end{equation}
	
	In the event that the cobordism constructed above is unobstructed, by \cite{biran2013fukayacategories} we have a similar exact triangle on the $A$-side,
	\[
		\bigsqcup_{i=1}^3 \tau_i \to \pi^{-1}(L(\phi_{T^2}))\to L_{T^2}.
	\]
	\begin{remark}
		It is reasonable to expect that the Lagrangian cobordism in question is unobstructed, as the intersections between the $\tau_i$ and $L_{T^2}$ are all in the same degree, therefore for index reasons we can rule out the existence of holomorphic strips on $L_{T^2}\cup \tau_i$. In complex dimension 2, one can additionally choose an almost complex structure to rule out the existence of Maslov 0 disks with boundary on $L_{T^2}\cup \tau_i$. 
		These are similar to the conditions used to prove unobstructedness of tropical Lagrangian hypersurfaces \cite{hicks2020tropical}.
	\end{remark}
	Under the assumptions of \cite[A.3.2]{hicks2020tropical} on the existence of a restriction morphism for the pearly model of Lagrangian Floer theory of Lagrangian cobordisms , the first and third term in these exact triangles are mirror to each other.
	This identifies the mirror of the middle term in the Chow group, proving the theorem. 
\end{proof}

This mirror symmetry statement ties together several lines of reasoning.
To each fiber $F_q\subset \CP^2\setminus E$ equipped with local system $\nabla$, we can associate a value $OGW(F_q, \nabla)$ which is a weighted count of holomorphic disks with boundary $F_q$ in the compactification $F_q\subset \CP^2$. 
By viewing $\check X_{9111}$ as the moduli space of pairs $(F_q, \nabla)$, we obtain a function 
\[W_{OGW}:\check X_{9111}\setminus I_9\to \CC.\]
This function matches the restriction $\check W_{9111}|_{\check X_{9111}\setminus I_9}$. 
In the previous discussion we conjectured that sheaves supported on $W^{-1}_{OGW}(0)$ are mirror to $L_{T^2}$. 
Recall that $L_{T^2}$ can also be constructed as the dual dimer Lagrangian (\cref{def:dualdimerlagrangian}) to the mutation configuration for the monotone fiber $F_0$.
 In this example, these two constructions of $L_{T^2}$ suggest that the dual dimer Lagrangian for a mutation configuration is mirror to the fiber of the Open Gromov-Witten superpotential. 
 
\printbibliography
\end{document}